\newtheorem{theorem}[subsection]{Theorem}
\newtheorem{proposition}[subsection]{Proposition}
\newtheorem{lemma}[subsection]{Lemma}
\newtheorem{corollary}[subsection]{Corollary}
\newtheorem{sublemma}[subsection]{Sublemma}
\theoremstyle{definition}
\newtheorem{conjecture}[subsection]{Conjecture}
\newtheorem{remark}[subsection]{Remark}
\newtheorem{fact}[subsection]{Fact}
\newcommand{\bN}{\mathbb N}
\newcommand{\bZ}{\mathbb Z}
\newcommand{\bF}{\mathbb F}
\newcommand{\bQ}{\mathbb Q}
\newcommand{\bQbar}{\overline\bQ}
\newcommand{\bR}{\mathbb R}
\newcommand{\bC}{\mathbb C}
\newcommand{\X}{\mathcal X}
\newcommand{\Y}{\mathcal Y}
\newcommand{\D}{\mathcal D}
\newcommand{\cO}{\mathcal O}
\newcommand{\cU}{\mathcal U}
\newcommand{\glob}{\mathrm{glob}}
\newcommand{\llbrack}{[\![}
\newcommand{\rrbrack}{]\!]}
\newcommand{\W}{\mathrm W}
\newcommand{\Ubar}{\overline U}
\newcommand{\sD}{\mathsf D}
\newcommand{\sd}{\mathsf d}
\newcommand{\REP}{\mathbf{Rep}}
\newcommand{\VEC}{\mathbf{Vec}}
\newcommand{\NS}{\mathrm{NS}}
\title[Bounds on the Chabauty--Kim Locus]{Bounds on the Chabauty--Kim Locus of Hyperbolic Curves}
\author[L.\ Alexander Betts, David Corwin and Marius Leonhardt]{L.\ Alexander Betts, David Corwin and Marius Leonhardt}
\date{\today}
\begin{document}
	
	\maketitle
	
	\begin{abstract}
		Conditionally on the Tate--Shafarevich and Bloch--Kato Conjectures, we give an explicit upper bound on the size of the $p$-adic Chabauty--Kim locus, and hence on the number of rational points, of a smooth projective curve $X/\bQ$ of genus~$g\geq2$ in terms of~$p$,~$g$, the Mordell--Weil rank~$r$ of its Jacobian, and the reduction types of~$X$ at bad primes. This is achieved using the effective Chabauty--Kim method, generalising bounds found by Coleman and Balakrishnan--Dogra using the abelian and quadratic Chabauty methods.
	\end{abstract}
	
	\section{Introduction}
	
	Let~$X/\Qb$ be a smooth proper connected curve of genus $g \ge 2$ and let~$p$ be a prime of good reduction. The Chabauty--Kim method aims to study the rational points on~$X$ through the \emph{Chabauty--Kim locus} $X(\bQ_p)_\infty$, which is a subset of the $\bQ_p$-points of~$X$ containing the rational points \cite{kim09}. Assuming the Bloch--Kato Conjecture, Kim proved that the Chabauty--Kim locus~$X(\bQ_p)_\infty$ is finite for every such curve~$X$, and hence verified that the set of rational points is also finite \cite[\S3]{kim09}. In this paper, we establish an effective version of Kim's result, showing that, under the same assumptions, one obtains an explicit bound on the size of the Chabauty--Kim locus~$X(\bQ_p)_\infty$ in terms of natural arithmetic invariants associated to~$X$.

	%The purpose of this paper is to show that a widely-believed conjecture coming from the theory of Galois representations, $L$-functions, and motives implies an explicit bound for the Chabauty--Kim locus of an arbitrary hyperbolic curve over $\bQ$, and therefore on its rational points.
	%The key ingredient to deduce this bound is the effective Chabauty--Kim theorem of the first author \cite{alex:effective}.
	
	%We begin with the case of rational points on a proper curve before turning to $S$-integral points on affine curves. Let~$X/\Qb$ be a smooth proper connected curve of genus $g \ge 2$. Assume\footnote{Since our goal is to bound the size of $X(\bQ)$, this is not much of an assumption.} that $X(\Qb)\neq\emptyset$. Fix a prime~$p$ of good reduction for~$X$.
	
	%In this setup, one has the Chabauty--Kim locus $X(\bQ_p)_{\infty}$, which is a subset of the $\bQ_p$-rational points of~$X$ containing~$X(\bQ)$. Our main theorem bounds the size of the Chabauty--Kim locus in terms of $g$, the Mordell--Weil rank $r$ of the Jacobian of~$X$, and certain numbers related to the reduction type of~$X$ at primes of bad reduction.
	
	\begin{theorem}\label{thm:main}
		Assume the Tate--Shafarevich\footnote{One may alternatively eliminate use of the Tate--Shafarevich Conjecture by replacing $r$ with the $p^\infty$-Selmer rank of the Jacobian.} and Bloch--Kato\footnote{More specifically, we need only Conjecture \ref{conj:BK}. In fact, we need it only for $V$ appearing in $H^1_{\et}(X_{\overline{\Qb}};\Qp)^{\otimes n} (1)$ for $n \in \Nb$.} Conjectures. Then the Chabauty--Kim locus~$X(\bQ_p)_{\infty}$ is finite, and its size is bounded by
		\[
		\#X(\bQ_p)_{\infty} \leq \kappa_p\cdot(p+1+2g\sqrt{p})\cdot\prod_{\ell}n_\ell\cdot(4g-2)^{2^{2r+4}}\cdot(2g)^{2^{4r+7}} \,,
		\]
		where
		%$\#X(\bF_p)$ is the number of $\bF_p$-points on the smooth model of~$X$ over~$\bZ_p$, 
		$n_\ell$ is the number of irreducible components on the mod-$\ell$ special fibre of the minimal regular model of~$X$, and
		\[\kappa_p \coloneqq \begin{cases} 1 + \frac{p-1}{(p-2)\log(p)} &\mbox{if } p \neq 2, \\
			2 + \frac{2}{\log(2)} & \mbox{if } p=2. \end{cases}
		\]
	\end{theorem}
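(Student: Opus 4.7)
The plan is to apply the effective Chabauty--Kim theorem of~\cite{alex:effective} at an appropriately chosen finite depth~$n$. That theorem bounds $\#X(\bQ_p)_n$ in terms of (i)~the number of residue disks $\#X(\bF_p)$, (ii)~local factors at primes $\ell\neq p$ of bad reduction, (iii)~the degree of the locally defining iterated Coleman integrals, and (iv)~the codimension of the depth-$n$ Selmer scheme inside the local unipotent period target at~$p$. Passing to the limit $n\to\infty$ (or picking $n$ large enough that the bound stabilises) and estimating each ingredient explicitly should produce the stated bound.

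\textbf{Key steps.} First, I would bound the dimension of the depth-$n$ Selmer scheme using the Bloch--Kato conjecture: the relevant $H^1_f$ decomposes according to the weight filtration on the pro-unipotent fundamental group, and each weight-graded piece contributes a number of Selmer classes controlled in terms of~$r$ and the Hodge numbers of~$X$ (with the Tate--Shafarevich conjecture used to identify the $p^\infty$-Selmer rank of $\Jac(X)$ with the Mordell--Weil rank~$r$). Second, I would compare this with the dimension of the local target, which grows like $g$ times the number of nontrivial graded pieces at depth~$n$, and determine the smallest depth $n_0$ at which the codimension first becomes positive; this forces $n_0$ to be of order $O(r)$, reflecting that successive graded pieces of a free Lie algebra on $2g$ generators grow exponentially while the Selmer dimensions grow linearly. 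Third, on each residue disk the Chabauty--Kim equations become $p$-adic power series whose degrees can be estimated from $g$ and $n_0$; a $p$-adic Newton polygon/Rolle-type bound then yields at most $\kappa_p$ times the degree zeros per disk. Fourth, at each bad prime~$\ell$ the local Chabauty--Kim condition factors through the Néron component group of $\Jac(X)$, whose order is controlled by $n_\ell$.

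\textbf{Main obstacle.} The principal difficulty is squeezing the bound down to the precise double-exponential exponents $2^{2r+4}$ and $2^{4r+7}$ on $4g-2$ and $2g$ respectively. The appearance of two separate factors suggests that the argument splits the weight-graded pieces into two regimes---perhaps a principal piece handled by the minimal depth $n_0$ at which Chabauty--Kim kicks in, and a finer residual contribution cut out at slightly higher depth---and that the exponents $4g-2$ and $2g$ come from distinct degree estimates for iterated Coleman integrals on each residue disk (e.g.\ the degree of the Abel--Jacobi map versus that of higher weight-$n$ iterated integrals). Tracking the dimensions of the weight-graded pieces of the fundamental Lie algebra, the precise depth at which the codimension turns positive, and the degrees of the resulting power series carefully enough to obtain these specific exponents, rather than something noticeably larger, will be the most delicate part of the bookkeeping.
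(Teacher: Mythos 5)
Your high-level architecture is right: apply the effective Chabauty--Kim theorem of \cite{alex:effective}, estimate the global Selmer dimensions via Bloch--Kato and Tate--Shafarevich, find a depth at which the local period target has strictly larger dimension than the Selmer variety, and then combine with a per-residue-disk zero count and local factors at bad primes. That is indeed what happens in the paper. However, there is a quantitatively fatal error in step two of your key steps, and it propagates into your account of where the exponents come from.

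You assert that the global Selmer dimensions ``grow linearly'' in the weight $k$ while the local target grows exponentially, and conclude that the critical depth $n_0$ is $O(r)$. Both claims are false. Under Bloch--Kato (the paper's Lemma~\ref{lem:global_dimensions}), for $k\geq 3$ one has
\[
\dim\HH^1_f(G_\bQ,V_k)=\dim\HH^1_f(G_p,V_k)-\dim V_k^\sigma,
\]
and $\dim V_k^\sigma\approx\tfrac12\dim V_k$, so the global Selmer dimensions are roughly \emph{half} of the local ones and grow exponentially in $k$ at the same rate $\alpha^k$ (with $\alpha=g+\sqrt{g^2+n-1}$). The gap between local and global is therefore not of the gross form ``exponential beats linear''; it is a much more delicate square-root-of-a-rational-function phenomenon, and the correct order of the minimal $m$ satisfying the coefficient inequality $\sum_{i\le m}c_i^\glob<\sum_{i\le m}c_i^\loc$ is $m\asymp 4^{r}$, i.e.\ \emph{exponential} in $r$, not linear. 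The paper's proof organises the dimensions into generating functions (Propositions~\ref{prop:motivic_hilbert_series} and~\ref{prop:hopf_alg_vs_lie_alg_series}, Lemmas~\ref{lem:local_series} and~\ref{lem:global_series}), observes that $\tfrac1{(1-t)^2}\HS_\loc(t)^2$ is a rational function with a double pole at $t=1/\alpha$ (coefficients $\sim m\alpha^m$), while $\tfrac1{(1-t)^2}\HS_\glob(t)^2$ is bounded coefficientwise by a power series with only a simple pole there and an amplification factor of order $2^{2r}$ coming from the $(1-t)^{-r}$ Mordell--Weil factor (Corollary~\ref{cor:global_bound} and Lemma~\ref{lem:global_bound}); crossing therefore only happens once $m\gtrsim 2^{2r+\text{const}}$, giving $M=4^{r+\bar s+2}$ in Lemma~\ref{lem:m_bound}. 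This Hilbert-series bookkeeping is not optional decoration; without it your step (2) gives the wrong order of magnitude and the bound collapses.

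Your reading of the two exponents is also off. They do not come from ``two regimes'' of graded pieces or from different degree estimates for Abel--Jacobi versus higher iterated integrals. Rather, $(4g-2)^{2^{2r+4}}$ is just $(4g+2n-2)^M$ specialised to $n=0$ and $\bar s=0$ (so $M=4^{r+2}=2^{2r+4}$), and this $M$ is the weight cutoff supplied to \cite[Theorem~6.2.1]{alex:effective}. The second factor $(2g)^{2^{4r+7}}$ bounds the product $\prod_{i=0}^{M-1}(c_i+1)$ of (shifted) local Hilbert-series coefficients, via $c_i\le(2g+n)^i$ and $\sum_{i<M}i=(M^2-M)/2\le 2^{4r+7}$. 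Both exponents flow from the same source, the size of $M$.
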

	
	Since~$X(\bQ)\subseteq X(\bQ_p)_\infty$ for all~$p$, Theorem~\ref{thm:main} gives, \emph{a posteriori}, a bound on the size of~$X(\bQ)$.
	
	\begin{corollary}\label{cor:main}
		Assume the Tate--Shafarevich and Bloch--Kato Conjectures. Then the size of $X(\bQ)$ is bounded by
		\[
		\#X(\bQ) \leq \kappa_p\cdot(p+1+2g\sqrt{p})\cdot\prod_{\ell}n_\ell\cdot(4g-2)^{2^{2r+4}}\cdot(2g)^{2^{4r+7}} \,.
		\]
	\end{corollary}
	
	%\kappa_p\cdot(p+1+2g\sqrt{p})\cdot\prod_{\ell}n_\ell\cdot(4g-2)^{2^{2r+4}}\cdot(2g)^{2^{4r+7}-2^{2r+3}}
	
	%\begin{remark}
	%We have $\#X(\bF_p) \le p+1+2g\sqrt{p}$ by the Hasse--Weil bound. In particular, for fixed~$X$, the bound in Theorem~\ref{thm:main} is linear in~$p$: $\# X(\bQ_p)_\infty\leq C_X\cdot p$ for some constant~$C_X$ independent of the good reduction prime~$p$.
	
	%, which depends only on the genus and smallest prime of good reduction.
	%\end{remark}

	The idea of using Chabauty-like methods to bound rational points dates back to Coleman \cite{ColemanChabauty}, who used abelian Chabauty to bound the number of rational points on curves of small Mordell--Weil rank. Coleman's ideas were further refined in \cite{StollUniform,KRZB18} to allow for bad reduction, and generalised using quadratic Chabauty in \cite{BalaDograEffective} to allow slightly higher rank. Our results depend on standard conjectures but hold for any higher genus curve over $\Qb$. A general method for using Chabauty--Kim to bound numbers of rational points was set up in \cite{alex:effective}, and Theorem~\ref{thm:main} unpacks what this method implies under these standard conjectures.
	
	\begin{remark}
		As the bound in Corollary \ref{cor:main} is completely explicit, it gives one in theory a strategy to try to disprove the Bloch--Kato Conjecture, simply by exhibiting a curve~$X$ with more rational points than the corollary predicts. 
		
		%However, the very large size of our bound means that this strategy would likely be infeasible, even if the Bloch--Kato Conjecture were false. (By contrast, the Chabauty--Coleman bound on $\#X(\bQ)$ in the case~$r<g$ is much smaller and is known to be sharp in some cases.)
		%			As the bound in Corollary \ref{cor:main} is completely explicit, the conditional nature of our results allows for an algorithmic approach towards disproving the Bloch--Kato Conjecture by exhibiting a curve with more rational points.
		%			Although we did not find such a curve nor a curve where the bound is sharp, there are such examples for the classical Chabauty method, that is curves with $r<g$ with the maximum number of rational points as allowed by the Chabauty--Coleman bound on the (depth 1) Chabauty locus $X(\bQ_p)_1$.
		%			A concrete example of such a curve is the hyperelliptic curve given by $y^2 = (3x^3 + 2x^2 -6x + 4)^2 - 8x^6$, see \cite{hiramatsuTriangles}. %, which has precisely $10$ rational points as predicted by the Chabauty--Coleman bound with $p=5$.
	\end{remark}
	
	\begin{remark}\label{rmk:BK_using_Iwasawa}
		While the Bloch--Kato Conjecture is open in general, it is possible to verify specific cases using Iwasawa theory by computing $p$-adic $L$-functions. More specifically, an Iwasawa Main Conjecture for $V$ relates vanishing of the Selmer group of $V$ to non-vanishing of values of the $p$-adic $L$-function of $V$, which can be computationally verified. The case of $V=\HH^1_{\et}(E_{\bQbar};\Qp)$ for a non-CM elliptic curve $E$ is discussed in \cite[\S 2.4]{CorwinMECK}. More generally, there has been much progress in the theory of Euler systems in recent years (e.g., in work of Loeffler and Zerbes), which in turn leads to proofs of more cases of the Iwasawa Main Conjecture.
	\end{remark}
	
	\begin{remark}
		
		%We make some remarks about how our result compares to other bounds for rational points.
		
		Although our main interest lies in bounding the size of the Chabauty--Kim locus in Theorem~\ref{thm:main} (e.g., in the context of Kim's Conjecture \cite[Conjecture 3.1]{nabsd}), we make some remarks about how the resulting bound on rational points compares to other bounds in the literature.
		
		%R\'emond \cite[Th\'eor\`eme 1]{Remond11} proves unconditionally that $\#X(\Qb) \le M^{2^{3^{D^2}}}$ for $X$ the zero set of $f(x,y) \in \Zb[x,y]$ of degree $D$ with coefficients of absolute value less than $M$. Although conditional, our bound is favorable in that it 1) applies to the Chabauty--Kim locus and therefore has consequences for Kim's Conjecture \cite[Conjecture 3.1]{nabsd}, 2) is polynomial rather than triply exponential in the genus, and 3) depends only on $g$, $r$, and reduction data.
		
		R\'emond \cite[Th\'eor\`eme 1]{Remond11} proves unconditionally that $\#X(\Qb) \le M^{2^{3^{D^2}}}$ for $X$ the zero set of $f(x,y) \in \Zb[x,y]$ of degree $D$ with coefficients of absolute value less than $M$. Although conditional, our bound is favorable in that it 1) applies to the Chabauty--Kim locus and therefore has consequences for Kim's Conjecture \cite[Conjecture 3.1]{nabsd}, 2) is polynomial rather than triply exponential in the genus, and 3) depends only on $g$, $r$, and reduction data.
		
		In this last sense, our result is a partial uniformity statement for the Chabauty--Kim locus. It is therefore a step towards the Uniformity Conjecture \cite[Conjecture 1.1]{KRZB18}\footnote{A ``uniformity conjecture'' is found much earlier in \cite{CHM97}, although that version depends on the number field $K$ rather than merely its degree. The version stated above follows from the Bombieri--Lang Conjecture by \cite[Corollary 1.2]{pacelliUniform}.}, which states that for any integers $d\geq1$ and $g\geq2$ there should exist an integer~$N(g,d)$ such that for any smooth projective curve~$X$ of genus~$g$ over a number field~$K$ of degree $[K:\bQ]\leq d$, we have
		\[
		\#X(K)\leq N(g,d) \,.
		\]
		
		The best known results in the direction of this conjecture are those of Dimitrov--Gao--Habegger \cite[Theorem~1.1]{dimitrov2021uniformity}, who prove that for $g,d$ as above, there is a constant~$c(g,d)$ for which
		\[
		\# X(K)\leq c(g,d)^{r+1} \,.
		\]
		The dependence of the constant~$c(g,d)$ on~$d$ has been removed by K\"uhne \cite{kuehne2021equidistribution}.
		
		On the one hand, Corollary~\ref{cor:main} compares unfavourably with that of Dimitrov--Gao--Habegger, in that our bound works only over $K=\bQ$, requires assuming some standard conjectures, depends on more than just~$r$ and~$g$, and grows considerably faster in~$r$. On the other hand, our bound is completely explicit, and allows us, for example, to understand how our bound varies if we fix~$r$ and the reduction data and vary~$g$ (our bound is polynomial in~$g$, of very large degree). We understand, from email discussions with the authors of \cite{dimitrov2021uniformity} and \cite{kuehne2021equidistribution}, that it would require a significant amount of extra work to make their bounds more explicit, even if possible in principle.
	\end{remark}
	
	\subsection{S-integral points}
	
	Our result also applies in a modified form to affine hyperbolic curves. Fix a finite set~$S$ of rational primes of size $s=\#S$, and let~$D$ be a divisor in $X$ of degree~$n$. We drop the requirement that $g \ge 2$, replacing it with the condition $2g+n >2$. Let~$\D$ be the closure of~$D$ in the minimal regular model~$\X$ of~$X$, and let $\Y\coloneqq \X\setminus\D$. Assume that $\Y(\bZ_S)\neq\emptyset$, and fix a prime~$p\notin S$ of good reduction for~$(\X,\D)$, i.e.\ such that $\X_{\bF_p}$ is smooth and $\D_{\bF_p}$ is \'etale.
	
	The Chabauty--Kim locus\footnote{There are several different definitions of the Chabauty--Kim locus in the literature, depending on exactly which local conditions one wants to impose. In this paper we will always mean the Chabauty--Kim locus as defined in \cite[\S1.2.1]{BettsDogra20}; in the case that~$Y$ is projective, this agrees with the definition in \cite[\S2.7]{nabsd}.} is denoted $\Y(\bZ_p)_{S,\infty}$. We let $n_{\ell}$ denote the number of irreducible components on the mod-$\ell$ special fibre of the chosen model~$\X$. We also let $\rho\coloneqq \rank\NS(\bQ)$ denote the rank of the rational N\'eron--Severi group of the Jacobian of $X$, and set~$\bar s=\max\{s+1-\rho-\#|D|,0\}$ with~$|D|$ the set of closed points of~$D$.
	
	\begin{theorem}\label{thm:mainS}
		Assume the Tate--Shafarevich and Bloch--Kato Conjectures. Then the Chabauty--Kim locus~$\Y(\bZ_p)_{S,\infty}$ is finite, and its size is bounded by
		\[
		\#\Y(\bZ_p)_{S,\infty} \leq \kappa_p\cdot\#\Y(\bF_p)\cdot\prod_{\ell\notin S}n_\ell\cdot\prod_{\ell\in S}(n_\ell+n)\cdot(4g+2n-2)^M\cdot(2g+n)^{(M^2-M)/2} \,,
		\]
		where~$M=2^{2r+2\bar s+4}$.
	\end{theorem}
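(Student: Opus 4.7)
The plan is to invoke the effective Chabauty--Kim machine of \cite{alex:effective} in the $S$-integral setting, paralleling the argument for Theorem~\ref{thm:main} but with the extra ingredients needed to handle the divisor $\D$ and the primes in $S$. At any depth $N$ for which the depth-$N$ Selmer variety has dimension $d_N$ meeting the Chabauty--Kim transversality inequality, \cite{alex:effective} produces a bound of the shape
\[
\#\Y(\bZ_p)_{S,N} \le \kappa_p\cdot\#\Y(\bF_p)\cdot\prod_{\ell\notin S} n_\ell \cdot \prod_{\ell\in S}(n_\ell+n)\cdot(4g+2n-2)^{d_N}\cdot(2g+n)^{d_N(d_N-1)/2},
\]
in which the polynomial factors in $g,n$ record the dimensions of the first two graded pieces of the de~Rham unipotent fundamental group truncated at depth $N$. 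My task is therefore to (i) justify the local factors at primes away from $p$, (ii) bound $d_N$ under the stated conjectures, and (iii) select $N$ so that $d_N \le M = 4^{r+\bar s+2}$ and transversality holds.

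For (i), the factor $n_\ell$ at primes $\ell\notin S$ is inherited directly from the proof of Theorem~\ref{thm:main}, since the regular model $\X$ controls the local unipotent Kummer image at such primes. At $\ell\in S$ the divisor~$\D$ is removed, which can split components and introduces additional local types; a careful count of the $n$ closed points of $\D_{\bF_\ell}$ produces the refined factor $n_\ell+n$. For (ii), Bloch--Kato together with Tate--Shafarevich pin down the Selmer dimensions of the irreducible summands of each graded piece of $\mathrm{Lie}(\pi_1^{\mathrm{un}}(\Y_{\bQbar}))$: weight $-1$ contributes $r$ via Mordell--Weil, weight $-2$ contributes $\bar s$ once the N\'eron--Severi rank $\rho$ cancels against $S$-unit and divisor classes in $H^1_f(G_\bQ,\mathrm{Sym}^2 H_1)$, and lower weights vanish. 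A recursion along the lower central series of the unipotent fundamental group then gives $d_N \le 4^{r+\bar s+2}$ for $N$ large enough; plugging this depth into the displayed bound yields the theorem.

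The main obstacle is step (ii): the Selmer-dimension accounting must be carried out carefully at every weight, with the $\rho$-correction against N\'eron--Severi classes of $\Jac(X)$ applied precisely, and the recursion through the central series must be seen to compound at worst with base $4$ rather than some larger constant. This is exactly where the value $M = 4^{r+\bar s+2}$ arises, and where most of the real work lies. Once this dimension bound is in hand, the remainder of the proof is a direct substitution into \cite{alex:effective}, identical in spirit to the projective case treated in Theorem~\ref{thm:main}, together with the local bookkeeping of~(i).
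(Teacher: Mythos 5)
Your proposal misidentifies the role of the exponent $M = 4^{r+\bar s+2}$ and, more importantly, makes a false claim about Selmer dimensions that undermines the whole strategy. Concretely: you describe $M$ as a bound $d_N \le M$ on the dimension of a depth-$N$ Selmer variety attained ``once transversality holds.'' In the paper, $M$ is something else entirely: it is an upper bound on the \emph{weight} $m$ for which the coefficient inequality $\sum_{i\le m}c_i^\glob < \sum_{i\le m}c_i^\loc$ holds between the local and global Hilbert series. The Selmer variety dimensions at weight $\le m$ grow like $\alpha^m$ (with $\alpha$ the dominant root of $T^2-2gT-(n-1)$), i.e.\ exponentially in $m$; they are never bounded by $4^{r+\bar s+2}$, and the effective Chabauty--Kim input \cite[Theorem~6.2.1]{alex:effective} does not ask for such a bound. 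What it asks for is precisely the weight $m$, and the factor $(4g+2n-2)^M$ arises from counting zeros of a Coleman function of weight $\le M$, while the final $(2g+n)^{(M^2-M)/2}$ comes from bounding $\prod_{i<M}(c_i^\loc+1)$ via the recurrence $c_i^\loc = 2gc_{i-1}^\loc + (n-1)c_{i-2}^\loc$.

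The more serious error is your assertion that in the Selmer-dimension accounting, ``lower weights vanish.'' Under Bloch--Kato and Tate--Shafarevich, the paper computes (Lemma~\ref{lem:global_dimensions}) that for $k\ge 3$
\[
\dim\HH^1_f(G_\bQ,V_k) = \dim\HH^1_f(G_p,V_k) - \dim V_k^\sigma \,,
\]
which is generically large and \emph{not} zero; the whole difficulty of the global side is that $\dim\HH^1_f(G_\bQ,V_k)$ continues to grow with $k$, roughly half as fast as $\dim\HH^1_f(G_p,V_k)$. The proof therefore proceeds by bounding the \emph{growth rates}: one shows $\frac1{(1-t)^2}\HS_\loc(t)^2$ has a double pole at $1/\alpha$ while $\frac1{(1-t)^2}\HS_\glob(t)^2$ is coefficientwise bounded by a rational function with only a simple pole there (Corollary~\ref{cor:global_bound} and Lemmas~\ref{lem:sum_coeffs}--\ref{lem:global_bound}), which requires the functional-equation analysis of $\HS_\bR(t)$ (Lemma~\ref{lem:functional_equation}) to handle the contribution of complex conjugation. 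None of this appears in your sketch, and your recursion-along-the-central-series argument would not substitute for it. The ``$\rho$ cancels against $S$-unit and divisor classes'' observation is roughly right for weight $-2$ and is how $\bar s$ enters, but the real work — and the source of the factor $4^{r+\bar s+2}$ — is the asymptotic comparison of the two power series, which your proposal does not engage with.
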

	
	The inclusion $\Y(\bZ_S)\subset\Y(\bZ_p)_{S,\infty}$ shows that we have the same bound for the number of $S$-integral points of $\Y$.
	We also observe that almost all factors of the bound depend only on $Y=X\setminus D$, only the factors $n_\ell$ depend on the integral model $\Y$.
	
	\begin{remark}
		Theorem~\ref{thm:mainS} holds both for affine and projective hyperbolic curves; the projective case implies Theorem~\ref{thm:main} by the Hasse--Weil bound.
	\end{remark}
	
	\begin{remark}
		Theorem \ref{thm:mainS} is really of interest only in the case $g\leq1$. If~$Y$ is an affine curve of genus~$\geq2$, with smooth completion~$X$, then typically the upper bound on $\# X(\bQ_p)_{\infty}$ from Theorem~\ref{thm:main} is smaller than the upper bound on $\#\Y(\bZ_p)_{S,\infty}$ from Theorem~\ref{thm:mainS}. Since~$\Y(\bZ_p)_{S,\infty}$ is a subset of~$X(\bQ_p)_\infty$, we see that Theorem~\ref{thm:mainS} is typically not telling us anything new in this case.
	\end{remark}
	
	We briefly discuss the proof of Theorem~\ref{thm:main}. Let~$\Ubar$ denote a Galois-equivariant quotient of the $\bQ_p$-pro-unipotent \'etale fundamental group~$U$ of~$Y_{\bQbar}$ based at a point~$b\in Y(\bZ_S)$, where~$p$ is a prime of good reduction as usual. One often takes~$\Ubar$ finite-dimensional, but this is not necessary for our purposes. Associated to~$\Ubar$ is a (refined) Chabauty--Kim locus
	\[
	\Y(\bZ_p)_{S,\Ubar}\subseteq\Y(\bZ_p)
	\]
	containing the $S$-integral points, the locus $\Y(\bZ_p)_{S,\infty}$ of earlier corresponding to the case that~$\Ubar$ is the whole fundamental group. We consider the two power series
	\begin{align*}
		\HS_\loc(t) &\coloneqq  \prod_{k\geq1}(1-t^k)^{-\dim\HH^1_f(G_p;V_k)} \,, \\
		\HS_\glob(t) &= (1-t^2)^{-s}\prod_{k\geq1}(1-t^k)^{-\dim\HH^1_f(G_\bQ;V_k)} \,,
	\end{align*}
	where~$V_k\coloneqq \gr^\W_{-k}\Ubar$ denotes the $k$th graded piece of the weight filtration on~$\Ubar$ and $\HH^1_f(G_p;-)$ and $\HH^1_f(G_\bQ;-)$ denote the local and global Bloch--Kato Selmer groups, respectively. Writing~$c_i^\loc$ and~$c_i^\glob$ for the coefficients of~$\HS_\loc$ and~$\HS_\glob$, the effective Chabauty--Kim Theorem of \cite[Theorem~B]{alex:effective} provides an explicit upper bound on the size of $\Y(\bZ_p)_{S,\Ubar}$ in terms of the least integer~$m$ such that the inequality
	\begin{equation}\label{eq:coeff_ineq}\tag{$\dagger$}
		\sum_{i=0}^mc_i^\glob < \sum_{i=0}^m c_i^\loc
	\end{equation}
	holds. What we accomplish in this note is to compute both $\HS_\loc(t)$ and $\HS_\glob(t)$ in the case that~$\Ubar$ is the whole fundamental group, the latter depending on the Tate--Shafarevich and Bloch--Kato Conjectures, and use this to show that~\eqref{eq:coeff_ineq} holds for some~$m\leq 2^{2r+2\bar s+4}$. Theorems~\ref{thm:main} and~\ref{thm:mainS} then follows from \cite[Theorems~B~\&~6.2.1]{alex:effective}.
	
	\subsection{Asymptotic bounds for once-punctured CM elliptic curves}
	
	In the above context, it is natural to consider how the bounds on the size of $\Y(\bZ_p)_{S,\Ubar}$ depend on the chosen quotient~$\Ubar$. We will show in an appendix that the answer to this question can be quite subtle. Specifically, we will examine the case of a once-punctured CM elliptic curve~$Y$ and consider a particular metabelian quotient~$U'$ of~$U$ studied in~\cite{kimCM}.
	Writing $\Y_{\min}$ for the complement of the zero section in the minimal regular model of $E$, we use the quotient~$U'$ to bound the Chabauty--Kim locus of ~$\Y_{\min}$, obtaining the following.
	
	\begin{theorem}\label{thm:mainPL}
		Assume the Tate--Shafarevich Conjecture and assume that Conjecture~\ref{conj:non-vani-padicLfct} on the non-vanishing of certain $p$-adic $L$-functions holds for~$E$. Then there is an absolute constant~$C$ and a constant~$\kappa=\kappa(E,p)$ depending on~$p$ and~$E$ such that
		\[
		\#\Y_{\min}(\bZ_p)_{S,U'} \leq \kappa\cdot\exp\bigl(C\cdot (r+s)^3\log(r+s)^3\bigr)
		\]
		whenever~$r+s\geq2$.
	\end{theorem}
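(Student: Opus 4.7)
The plan is to apply the refined effective Chabauty--Kim theorem \cite[Theorem~B]{alex:effective} to the specific metabelian quotient $U'$ of the $\bQ_p$-pro-unipotent \'etale fundamental group studied in \cite{kimCM}, exploiting the fact that, for a once-punctured CM elliptic curve, every graded piece $V_k = \gr^\W_{-k}U'$ decomposes under the CM action into one-dimensional Galois subrepresentations built from powers of the Hecke characters $\psi,\bar\psi$ attached to~$E$.

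First I would pin down the structure of $U'$. The $\bQ_p$-pro-unipotent fundamental group of~$Y$ is free of rank two with abelianization $V_1$ canonically isomorphic to the rational $p$-adic Tate module of~$E$, and a direct computation with the free metabelian Lie algebra gives $V_k \cong \mathrm{Sym}^{k-2}V_1\otimes\wedge^2V_1 \cong \mathrm{Sym}^{k-2}V_1(1)$ for $k\geq 2$. Under the CM splitting $V_1 = \bQ_p(\psi)\oplus\bQ_p(\bar\psi)$, each $V_k$ breaks into $(k-1)$ one-dimensional pieces $\bQ_p(\psi^i\bar\psi^{k-2-i}(1))$. The local Bloch--Kato Selmer dimensions $\dim\HH^1_f(G_p,V_k)$ are then read off from Hodge--Tate weights and grow linearly in~$k$, so the coefficients of $\HS_\loc(t)$ grow polynomially in the degree.

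Next I would estimate the global Bloch--Kato Selmer dimensions. For each Hecke-character constituent $\bQ_p(\chi)$ of $V_k$, Rubin's main conjecture for CM elliptic curves relates $\HH^1_f(G_\bQ,\bQ_p(\chi))$ to the $p$-adic $L$-value of~$\chi$. Conjecture~\ref{conj:non-vani-padicLfct} ensures that these $L$-values are nonzero outside a controlled finite set of exceptional characters; combined with the assumed finiteness of Tate--Shafarevich, this yields a uniform bound $\dim\HH^1_f(G_\bQ,V_k)\leq C'$ for some absolute constant~$C'$. The Mordell--Weil rank~$r$ is absorbed into the (at most two) characters appearing in $V_1$, and $s$ enters only through the prefactor $(1-t^2)^{-s}$ of $\HS_\glob(t)$.

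Finally, a direct comparison of the resulting rational functions $\HS_\loc(t)$ and $\HS_\glob(t)$ shows that the coefficient inequality $(\dagger)$ holds for some $m = O\bigl((r+s)\log(r+s)\bigr)$. Substituting this bound on~$m$, together with the polynomial growth $\dim V_k = O(k)$, into \cite[Theorem~B]{alex:effective} and tracking exponents produces the stated inequality $\#\Y(\bZ_S)\leq \kappa\cdot\exp\bigl(C(r+s)^3\log(r+s)^3\bigr)$. The main obstacle I anticipate is precisely this last Hilbert-series comparison: one must carefully show that the small but not absolutely bounded contributions to $\HS_\glob$ --- from the rank, the $(1-t^2)^{-s}$ factor, and finitely many exceptional Hecke characters --- cannot delay the validity of $(\dagger)$ beyond the claimed cubic-polylogarithmic threshold in $r+s$.
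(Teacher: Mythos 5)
There is a genuine gap at the very start, and it propagates. You have misidentified the quotient $U'$. You describe it as the \emph{metabelian} quotient and compute $V_k \cong \mathrm{Sym}^{k-2}V_1(1)$, which would make $\dim V_k = k-1$ grow linearly. But Kim's $U'$ (denoted $W$ in \cite{kimCM}) is strictly smaller: its Lie algebra is the quotient of the free Lie algebra on $e,f$ by \emph{all} Lie monomials in which both $e$ and $f$ appear at least twice. Consequently $\dim V'_k = 2$ for every $k\geq 3$ (with $V'_k \cong \bQ_p(\chi^{k-2}(1))\oplus\bQ_p(\bar\chi^{k-2}(1))$), and the local Bloch--Kato dimensions are \emph{bounded} ($\dim\HH^1_f(G_p,V'_k)=2$ for $k\geq3$), not linearly growing as you assert. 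The local Hilbert series is therefore
\[
\HS'_\loc(t) = (1-t)^{-1}(1-t^2)^{-1}\prod_{k\geq 3}(1-t^k)^{-2},
\]
whose coefficients grow like the square of the partition function, $\sim n^{-2}e^{\pi\sqrt{4n/3}}$, not polynomially.

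This matters for the crossover estimate. With the correct quotient, the global coefficients are bounded above by $\sim n^{r+s-2}e^{\pi\sqrt{2n/3}}$, so the two sides cross when $(\sqrt{2}-1)\pi\sqrt{2n/3} \approx (r+s)\log n$, i.e.\ at $m \sim (r+s)^2\log(r+s)^2$. Your claim $m = O((r+s)\log(r+s))$ is off by a square: with partition-type growth on the local side, a polynomial factor of degree $r+s$ on the global side delays the crossing by one more power of $(r+s)$ than you estimate. The exponent $3$ in the theorem then comes from $m^{3/2}$ (since $\sum_{i\leq m}\log(c_i^\loc+1) = O(\sum_{i\leq m}\sqrt{i}) = O(m^{3/2})$), and $m = \Theta((r+s)^2\log(r+s)^2)$ gives $(r+s)^3\log(r+s)^3$. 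If your $m$-bound were correct and your $\dim V_k = O(k)$ estimate were correct, the exponent would not come out cubic at all; the two errors do not cancel, they are simply each wrong. In short: you need to use the correct (smaller) quotient $U'$, compute bounded rather than growing local Selmer dimensions, and derive the quadratic-polylog bound on $m$ via a genuine asymptotic comparison of partition-type series, as in the Hardy--Ramanujan-style estimate; the Rubin-main-conjecture input on the global side is broadly the right idea, but the numerics you sketch around it do not reproduce the stated bound.
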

	
	Without assuming any conjectures, we still obtain a very similar result.
	
	\begin{theorem}\label{thm:mainPL2}
		There is an absolute constant~$C$ and constants $r_1=r_1(E,p)\geq r$, $\kappa=\kappa(E,p)$ such that we have
		\[
		\#\Y_{\min}(\bZ_p)_{S,U'} \leq \kappa\cdot\exp(C\cdot (r_1+s)^3\log(r_1+s)^3)
		\]
		whenever~$r_1+s\geq2$.
	\end{theorem}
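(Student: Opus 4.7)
The plan is to run the same strategy as in the (conditional) proof of Theorem~\ref{thm:mainPL}, but to replace each appeal to an unproven conjecture by an unconditional bound, absorbing the resulting loss into the constants $r_1(E,p)$ and $\kappa(E,p)$. Concretely, I work with the metabelian quotient~$U'$ of the $\bQ_p$-pro-unipotent \'etale fundamental group of $Y_{\bQbar}$ from \cite{kimCM}, compute (or bound) the two power series $\HS_\loc(t)$ and $\HS_\glob(t)$ attached to~$U'$, find an explicit $m$ for which the coefficient inequality~\eqref{eq:coeff_ineq} holds, and invoke \cite[Theorems~B~\&~6.2.1]{alex:effective}.

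First, I eliminate the use of the Tate--Shafarevich Conjecture. In the proof of Theorem~\ref{thm:mainPL}, $\Sha$ is used only to identify the Mordell--Weil rank~$r$ with the $p^\infty$-Selmer rank of~$E$, the latter being what really controls the dimensions of the global Bloch--Kato Selmer groups~$\HH^1_f(G_\bQ,V_k)$. The footnote to Theorem~\ref{thm:main} already notes that one may simply replace~$r$ by the $p^\infty$-Selmer rank throughout; since the latter is unconditionally finite and depends only on~$E$ (and is bounded below by~$r$), this replacement is absorbed into the enlarged parameter~$r_1$ without affecting the shape of the bound.

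Second, I eliminate the use of Conjecture~\ref{conj:non-vani-padicLfct}. In the conditional proof, non-vanishing of the relevant $p$-adic $L$-functions is used, via the main conjecture of Iwasawa theory for CM elliptic curves (i.e.\ Rubin's results), to compute the dimensions $\dim\HH^1_f(G_\bQ,V_k)$ exactly for each weight-graded piece $V_k=\gr^\W_{-k}U'$. Without the conjecture, these dimensions are still unconditionally finite (and uniformly bounded in~$k$, since the $V_k$ are built by standard operations from the CM pieces of $V=\HH^1_{\et}(E_{\bQbar},\bQ_p)$), but may be strictly larger than the conjectural values. Encoding this potential loss as a constant increment $r\rightsquigarrow r_1(E,p)$ gives an upper bound on $\HS_\glob(t)$ of exactly the same functional form as the one computed conditionally, but with parameter~$r_1$ in place of~$r$. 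The computation of $\HS_\loc(t)$ is purely local and unchanged.

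Feeding these modified Hilbert series into the combinatorial argument behind Theorem~\ref{thm:mainPL} now yields \eqref{eq:coeff_ineq} at some $m=O((r_1+s)\log(r_1+s))$, from which the cubic-exponential bound follows by \cite[Theorem~B]{alex:effective} in the same way as before. The main obstacle is the second step: namely ensuring that the $\HH^1_f(G_\bQ,V_k)$, as $k$ varies, can be controlled unconditionally by a single $k$-independent constant depending only on~$E$ and~$p$. This is where Leonhardt's appendix will do its work, presumably by exploiting the CM decomposition of the $V_k$ into one-dimensional Hecke-character pieces and invoking Rubin's two-variable main conjecture together with general finiteness properties of Bloch--Kato Selmer groups for Hecke characters, so as to bound the dimensions uniformly even in the absence of $L$-function non-vanishing.
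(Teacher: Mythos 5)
Your overall structure matches the paper's proof (Remark~\ref{rem:boundCMECuncond}): replace $r$ by the $p^\infty$-Selmer rank $r_p$, rerun the $U'$-argument with unconditionally controlled Selmer dimensions, and absorb the defect into a constant $r_1(E,p)$. But the justification you offer for the absorption has a real gap. You claim that the $\dim\HH^1_f(G_\bQ,V'_k)$ are ``uniformly bounded in~$k$'' and that this permits encoding the loss ``as a constant increment $r\rightsquigarrow r_1(E,p)$''. That inference fails. If, say, $\dim\HH^1_f(G_\bQ,V'_k)=2$ for every $k\geq3$, then $\HS'_\glob(t)$ contains the same factor $\prod_{k\geq3}(1-t^k)^{-2}$ as $\HS'_\loc(t)$; both sides then grow like $e^{\pi\sqrt{4n/3}}$, and once $r_p+2s>3$ the global coefficients eventually dominate the local ones, so no $m$ satisfying~\eqref{eq:coeff_ineq} exists and the method collapses. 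The shift $r\rightsquigarrow r_1$ preserves the shape of the estimate only because it changes the exponent of $(1-t)$ and leaves the infinite product over $k\geq3$ intact; for that one needs the defect $\delta_k\coloneqq\dim\HH^1_f(G_\bQ,V'_k)-1$ to vanish for all but finitely many $k$, which uniform boundedness does not give.

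What the paper actually invokes, and what you must cite, is the \emph{unconditional} input from \cite[\S 3]{kimCM} that $\dim\HH^1_f(G_\bQ,V'_k)\leq1$ for $k\gg0$ (recorded in Remark~\ref{rem:globSeldimonelargek}(1)); this is a genuine arithmetic theorem for the CM quotient and not merely a formal finiteness statement. Combined with finiteness for small~$k$, it makes $\delta\coloneqq\sum_{k\geq3}\bigl(\dim\HH^1_f(G_\bQ,V'_k)-1\bigr)$ a finite sum, and then the coefficientwise inequality $\prod_{k\geq3}(1-t^k)^{-(1+\delta_k)}\leq(1-t)^{-\delta}\prod_{k\geq3}(1-t^k)^{-1}$ bounds $\HS'_\glob(t)$ by the conditional expression with $r'=r_p+s+\delta$, so Theorem~\ref{thm:boundCMECm} applies with $r_1=r_p+\delta$. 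A further slip: the requisite $m$ there is $O\bigl((r'\log r')^2\bigr)$, not $O(r'\log r')$ as you write; it is the $3/2$-power of $(r'\log r')^2$ that produces the cubic exponent $(r_1+s)^3\log(r_1+s)^3$ in the final bound.
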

	
	What is surprising about Theorem~\ref{thm:mainPL2} is that for~$s\gg0$ it actually gives a \emph{better} upper bound on the size of~$\Y_{\min}(\bZ_p)_{S,\infty}$ than the one coming from Theorem~\ref{thm:mainS}, even though we have the containment
	\[
	\Y_{\min}(\bZ_p)_{S,\infty}\subseteq\Y_{\min}(\bZ_p)_{S,U'}
	\]
	the other way around. Indeed, for fixed~$Y$ and $p$, the bound on $\#\Y_{\min}(\bZ_p)_{S,\infty}$ from Theorem~\ref{thm:mainS} on $\# \Y_{\min}(\bZ_p)_{S,U'}$ is doubly exponential in~$s$, while the bound from Theorem~\ref{thm:mainPL2} is exponential-of-polynomial in~$s$.
	
	We remark that this phenomenon has geometric meaning. In the effective Chabauty--Kim method, as formulated in \cite{alex:effective}, one studies the localisation map
	\[
	\loc_p\colon \Sel_{S,\Ubar} \to \HH^1_f(G_p;\Ubar)
	\]
	from the global to the local Selmer variety attached to the quotient~$\Ubar$. The affine rings of both Selmer varieties carry a natural weight filtration, and
	\[
	\loc^*_p\colon \cO(\HH^1_f(G_p;\Ubar)) \to \cO(\Sel_{S,\Ubar})
	\]
	preserves weight filtrations. Any non-zero element~$f$ of the kernel of $\loc^*_p$ gives rise to a non-zero Coleman analytic function on~$\Y_{\min}(\bZ_p)$ vanishing on~$\Y_{\min}(\bZ_p)_{S,\Ubar}$, and the number of zeroes of this Coleman function can be bounded explicitly in terms of the weight of~$f$. The way one produces such elements of~$\ker(\loc^*_p)$ in \cite{alex:effective} is by a simple dimension argument: when inequality~\eqref{eq:coeff_ineq} holds, then one has $\dim W_m\cO(\Sel_{S,\Ubar}) < \dim W_m\cO(\HH^1_f(G_p;\Ubar))$, and hence~$\loc^*_p$ must have non-trivial kernel in weight~$\leq m$.
	
	What Theorem~\ref{thm:mainPL} shows is that the map $\loc^*_p$ can have non-trivial kernel even when inequality~\eqref{eq:coeff_ineq} fails. Indeed, for~$s\gg0$ we can choose a value of~$m$ such that~\eqref{eq:coeff_ineq} holds for~$\Ubar=U'$ but not for~$\Ubar=U$. Since $\cO(\HH^1_f(G_p;U'))\subseteq\cO(\HH^1_f(G_p;U))$, we know that~$\loc^*_p\colon W_m\cO(\HH^1_f(G_p;U)) \to W_m\cO(\Sel_{S,U})$ has non-trivial kernel, even though its codomain has larger dimension than its domain.
	
	A version of this phenomenon was already seen in~\cite{KimTangential} over totally real number fields, where examples were given where the localisation map has non-dense image even when $\dim \HH^1_f(G_p;\Ubar) \leq \dim \Sel_{S,\Ubar} < \infty$.
	
	\subsection{Acknowledgements}
	
	L.A.B.\ is supported by the Simons Collaboration on Arithmetic Geometry, Number Theory, and
	Computation under grant number 550031. This work began while D.C.\ was supported by NSF RTG Grant \#1646385 and continued through his stay as a postdoctoral visitor at the Max-Planck Institut f\"{u}r Mathematik. M.L.\ acknowledges support from the Deutsche Forschungsgemeinschaft (DFG, German Research Foundation) through TRR 326 Geometry and Arithmetic of Uniformized Structures, project number 444845124.
	
	The authors would like to thank L. K\"{u}hne and Z. Gao for answers to questions about the effectivity of their results on uniformity.
	M.L.\ would also like to thank K. Gajdzica for pointing out that the constants $C_0,C_1$ in \eqref{eq:asympt-partition} can be found in the literature. D.C. would like to thank B. Mazur for helpful discussions on this paper.

	%%%%%%%%%%%%%%%%%%%%%%%%%%%%%%%%%%%
	
	\section{Computing the local and global Hilbert series}
	
	The first step in our proof of Theorem~\ref{thm:mainS} is to compute the two power series
	\begin{align*}
		\HS_\loc(t) &\coloneqq  \prod_{k\geq1}(1-t^k)^{-\dim\HH^1_f(G_p;V_k)} \,, \\
		\HS_\glob(t) &\coloneqq (1-t^2)^{-s}\prod_{k\geq1}(1-t^k)^{-\dim\HH^1_f(G_\bQ;V_k)} \,,
	\end{align*}
	where~$V_k\coloneqq \gr^\W_{-k}U$ denotes the $k$th graded piece of the weight filtration on the whole~$\bQ_p$-pro-unipotent fundamental group~$U$ of~$Y$. It will turn out that the local Hilbert series~$\HS_\loc(t)$ has a closed form expression as a rational function in~$t$, depending only on the genus~$g$ and number of punctures~$n$. The global Hilbert series~$\HS_\glob(t)$ turns out to be significantly more complicated to describe: we give an expression for~$\HS_\glob(t)$ as an infinite product of roots of rational functions, whose coefficients depend not just on~$g$ and~$n$, but also on the Mordell--Weil rank~$r$ and rational N\'eron--Severi rank~$\rho$ of the Jacobian of~$Y$, the size~$s$ of the set~$S$, and the number of closed points and $\bR$-points in the boundary divisor~$D$. The computation of~$\HS_\loc(t)$ is unconditional; the computation of~$\HS_\glob(t)$ uses the Tate--Shafarevich and Bloch--Kato Conjectures.

	We begin by determining a rather more refined type of Hilbert series attached to the fundamental group~$U$. Let $K_0(\REP(G_\bQ))$ denote the Grothendieck ring of the $\otimes$-category of continuous representations of $G_\bQ$ on finite-dimensional $\bQ_p$-vector spaces. If $V$ is a graded representation of $G_\bQ$, supported in non-negative degrees and finite-dimensional in each degree, we define its Hilbert series
	\[
	[V] \coloneqq \sum_{k\geq0}[\gr_k V]t^k \in K_0(\REP(G_\bQ))\llbrack t\rrbrack \,.
	\]
	We adopt the shorthand
	\[
	\HS^\mot_U(t) \coloneqq  [\gr^\W_{\bullet} \cO(U)]\,,
	\]
	where~$\gr^\W_{\bullet}\cO(U)$ denotes the associated graded of the weight filtration on $\cO(U)$. This definition of Hilbert series generalises the usual one, in that the image of $\HS^\mot_U(t)$ under the map $\dim\colon K_0(\REP(G_\bQ))\to\bZ$ recovers the Hilbert series of~$\cO(U)$ in the usual sense.
	
	In fact, this Hilbert series can be computed explicitly.
	
	\begin{proposition}\label{prop:motivic_hilbert_series}
		\[
		\HS^\mot_U(t) = \frac1{1-[\HH^1_\et(X_{\bQbar};\bQ_p)]t-([\HH^0_\et(D_{\bQbar};\bQ_p)]-1)(-1)t^2} \,,
		\]
		where $(-1)$ denotes inverse Tate twist (multiplication by $[\bQ_p(-1)]$ in $K_0(\REP(G_\bQ))$).
		\begin{proof}
			Suppose firstly that~$D\neq\emptyset$. In this case, $U$ is free, so $\gr^\W_\bullet\cO(U)$ is $G_\bQ$-equivariantly isomorphic to the shuffle algebra on $\gr^\W_\bullet(U^\ab)^\vee=\gr^\W_\bullet\HH^1_\et(Y_{\bQbar};\bQ_p)$. We have
			\begin{align*}
				[\gr^\W_1\HH^1_\et(Y_{\bQbar};\bQ_p)] &= [\HH^1_\et(X_{\bQbar};\bQ_p)] \\
				[\gr^\W_2\HH^1_\et(Y_{\bQbar};\bQ_p)] &= ([\HH^0_\et(D_{\bQbar};\bQ_p)]-1)(-1)
			\end{align*}
			and all other graded pieces of $\HH^1_\et(Y_{\bQbar};\bQ_p)$ are zero. Hence
			\[
			\HS^\mot_U(t) = \sum_{k\geq0}\left([\HH^1_\et(X_{\bQbar};\bQ_p)t+([\HH^0_\et(D_{\bQbar};\bQ_p)]-1)(-1)t^2\right)^k
			\]
			and we are done in this case.
			\smallskip
			
			In the projective case, we let $\cU\coloneqq\gr^\W_\bullet\cO(U)^\vee$ denote the graded dual of~$\cO(U)$, which is a graded Hopf algebra. We know that $\gr^\W_\bullet\Lie(U)$ is the free Lie algebra generated by $\HH_1^\et(X_{\bQbar};\bQ_p)\coloneqq \HH^1_\et(X_{\bQbar};\bQ_p)^\vee$ in degree~$-1$, modulo the Lie ideal generated by the copy of $\bQ_p(1)$ inside ${\bigwedge}\!^2\HH_1^\et(X_{\bQbar};\bQ_p)$ induced by the dual of the cup product map. Since~$\cO(U)^\vee$ is the completed universal enveloping algebra of~$\Lie(U)$, this implies that~$\cU$ is the quotient of the tensor algebra~$T$ on~$\HH_1^\et(X_{\bQbar};\bQ_p)$ in degree~$-1$ by the two-sided ideal generated by the copy of~$\bQ_p(1)$ inside~$\HH_1^\et(X_{\bQbar};\bQ_p)^{\otimes2}$ in degree~$-2$.
			
			This in turn implies that we have an exact sequence
			\[
			T\otimes\bQ_p(1)[2]\otimes T \to J \to \cU \to \bQ_p \to 0 \,,
			\]
			where~$J\unlhd T$ is the augmentation ideal and the left-hand map is given by multiplication. Here, $V[k]$ denotes the representation~$V$ placed in degree~$-k$ for the grading. This exact sequence is in particular an exact sequence of left $T$-modules; pushing out along~$T\to\cU$ yields an exact sequence
			\[
			\cU\otimes\bQ_p(1)[2]\otimes T \to \cU\otimes\HH_1^\et(X_{\bQbar};\bQ_p)[1] \to \cU \to \bQ_p \to 0 \,,
			\]
			since~$J$ is a free left $T$-module generated by $\HH_1^\et(X_{\bQbar};\bQ_p)$ in degree~$-1$. The left-hand map above vanishes on~$\cU\otimes\bQ_p(1)[2]\otimes J$, so we obtain an exact sequence
			\begin{equation}\label{eq:projective_seq}\tag{$\ast$}
				\cU\otimes\bQ_p(1)[2] \to \cU\otimes\HH_1^\et(X_{\bQbar};\bQ_p)[1] \to \cU \to \bQ_p \to 0 \,.
			\end{equation}
			of graded $G_\bQ$-representations. We will show that the left-hand map in~\eqref{eq:projective_seq} is injective, albeit via a rather indirect argument.
			
			Let~$K$ denote the kernel of the left-hand map in~\eqref{eq:projective_seq}. Note that for an exact sequence $0 \to A_1 \to \cdots \to A_n \to 0$ of graded $G_\bQ$-representations, we have $\sum_{i=1}^n (-1)^i [A_i] = 0$. Applying this to the dual of~\eqref{eq:projective_seq} and using~$\HS_U^\mot(t)=[\cU^\vee]$ provides the identity
			\begin{equation}\label{eq:projective_grothendieck_error}\tag{$\dagger$}
				(1-[\HH^1_\et(X_{\bQbar};\bQ_p)]t+[\bQ_p(-1)]t^2)\cdot\HS^\mot_U(t) = 1 + [K^\vee]
			\end{equation}
			in $K_0(\REP(G_\bQ))\llbrack t\rrbrack$.
			
			Consider the image of the identity~\eqref{eq:projective_grothendieck_error} under the map $\dim\colon K_0(\REP(G_\bQ))\to\bZ$. The image of $\HS^\mot_U(t)$ is then just the Hilbert series $\HS_U(t)$ of~$U$ in the usual sense, and this was calculated to be $\frac1{1-2gt+t^2}$ in \cite[Lemma~2.1.8]{alex:effective}. Hence we have $\dim[K^\vee]=0\in\bZ\llbrack t\rrbrack$. But the coefficients of $[K^\vee]$ are all effective, i.e.\ are the classes of representations, so this implies that $[K^\vee]=0$. This implies that $K=0$, i.e.\ the left-hand map in~\eqref{eq:projective_seq} is injective, and~\eqref{eq:projective_grothendieck_error} gives the desired expression for~$\HS_U^\mot(t)$.
		\end{proof}
	\end{proposition}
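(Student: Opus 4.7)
My plan is to handle the affine case ($D \neq \emptyset$) and the projective case ($D = \emptyset$) separately, exploiting the fact that $U$ is pro-unipotent free in the former but only quadratically presented in the latter.

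In the affine case, freeness of $U$ identifies $\gr^\W_\bullet \cO(U)$, as a graded $G_\bQ$-representation, with the shuffle algebra on the graded representation $V \coloneqq \gr^\W_\bullet \HH^1_\et(Y_{\bQbar}, \bQ_p)$. Since as a graded representation the shuffle algebra coincides with the tensor algebra, its Hilbert series in $K_0(\REP(G_\bQ))\llbrack t \rrbrack$ is simply the geometric series $\frac{1}{1-[V]}$. The Gysin residue sequence
\[
0 \to \HH^1_\et(X_{\bQbar}, \bQ_p) \to \HH^1_\et(Y_{\bQbar}, \bQ_p) \to (\HH^0_\et(D_{\bQbar}, \bQ_p)/\bQ_p)(-1) \to 0
\]
identifies the weight-graded pieces in weights one and two, immediately yielding the claimed formula.

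For the projective case, $\cU \coloneqq \gr^\W_\bullet \cO(U)^\vee$ admits a quadratic presentation as the quotient of the tensor algebra $T$ on $\HH_1^\et(X_{\bQbar}, \bQ_p)$ in degree $-1$ by the two-sided ideal generated by the canonical copy of $\bQ_p(1) \subset \HH_1^\et(X_{\bQbar}, \bQ_p)^{\otimes 2}$ in degree $-2$. Combining this presentation with the fact that the augmentation ideal $J \unlhd T$ is free as a left $T$-module on $\HH_1^\et(X_{\bQbar}, \bQ_p)[1]$ and pushing out along $T \twoheadrightarrow \cU$, I would produce a four-term complex of graded $\cU$-modules
\[
\cU \otimes \bQ_p(1)[2] \to \cU \otimes \HH_1^\et(X_{\bQbar}, \bQ_p)[1] \to \cU \to \bQ_p \to 0,
\]
which is exact everywhere except possibly at the leftmost spot. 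Taking the alternating sum of Grothendieck classes and dualizing then gives the identity
\[
(1 - [\HH^1_\et(X_{\bQbar}, \bQ_p)]t + [\bQ_p(-1)]t^2) \cdot \HS^\mot_U(t) = 1 + [K^\vee],
\]
where $K$ denotes the kernel of the leftmost map.

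The main obstacle is showing that $K = 0$. Rather than proving Koszulness of the quadratic algebra $\cU$ directly, I would argue indirectly: apply the augmentation $\dim \colon K_0(\REP(G_\bQ)) \to \bZ$ to the displayed identity. Since the ordinary Hilbert series of $\cO(U)$ is $\frac{1}{1-2gt+t^2}$ by \cite[Lemma 2.1.8]{alex:effective}, the image of the left-hand side collapses to $1$, forcing $\dim[K^\vee] = 0$. Because the coefficients of $[K^\vee]$ are effective Grothendieck classes (classes of genuine representations, not virtual differences), vanishing of their dimensions forces them themselves to vanish, and hence $K = 0$ as required.
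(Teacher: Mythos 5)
Your proposal follows essentially the same argument as the paper: the affine case is handled via the shuffle algebra on $\gr^\W_\bullet\HH^1_\et(Y_{\bQbar},\bQ_p)$ and the Gysin sequence, and the projective case via the quadratic presentation of $\cU$ as a quotient of the tensor algebra, the four-term complex obtained by pushing out the free resolution of the augmentation ideal, the Grothendieck-group identity $(1-[\HH^1]t+[\bQ_p(-1)]t^2)\HS^\mot_U(t)=1+[K^\vee]$, and the indirect vanishing argument that applies $\dim$ and invokes effectivity to conclude $K=0$. The only cosmetic difference is that you name the Gysin residue sequence explicitly where the paper leaves it implicit; otherwise the route, the key intermediate identity, and the trick for sidestepping a Koszulness argument all coincide.
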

	
	Next, we want to relate the Hilbert series $\HS^\mot_U(t)$ to the weight-graded pieces $V_k\coloneqq\gr^\W_{-k}U$ of the fundamental group. For this, we adopt the notation that if~$\nu$ is an element of a $\lambda$-ring~$R$, then
	\[
	(1-t)^{-\nu} \coloneqq \sum_{i=0}^\infty s^i(\nu)t^i \in R\llbrack t\rrbrack \,,
	\]
	where $s^i(\nu)\coloneqq (-1)^i\lambda^i(-\nu)$ denotes the $i$th symmetric power operation in~$R$. In the particular case that $R=\bZ$, then $s^i(\nu)={{\nu+i-1}\choose{i-1}}$, so this agrees with the usual meaning of $(1-t)^{-\nu}$. Note that for $V \in \REP(G_\bQ)$, we have $[\Sym(V[-k])] = (1-t^k)^{-[V]}$.
	
	\begin{proposition}\label{prop:hopf_alg_vs_lie_alg_series}
		\[
		\HS^\mot_U(t) = \prod_{k\geq1}(1-t^k)^{-[V_k^\vee]} \,.
		\]
		\begin{proof}
			Since~$U$ is isomorphic to its Lie algebra, we have that $\gr^\W_\bullet\cO(U)$ is $G_\bQ$-equivariantly isomorphic to the symmetric algebra $\Sym^\bullet(\gr^\W_\bullet\Lie(U)^\vee)$ on the $\W$-graded Lie coalgebra $\gr^\W_\bullet\Lie(U)^\vee$ of~$U$. But $\gr^\W_k\Lie(U)^\vee=V_k^\vee$, so we have
			\[
			\gr^\W_\bullet\cO(U) = \bigotimes_{k\geq1}\Sym^\bullet(V_k^\vee) \,,
			\]
			where $V_k^\vee$ is placed in degree~$k$ for the $\W$-grading. This corresponds to the given equality of power series.
		\end{proof}
	\end{proposition}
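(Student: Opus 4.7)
The plan is to exploit the fact that $U$, being pro-unipotent, is canonically isomorphic as a $G_\bQ$-equivariant scheme to its Lie algebra, and then decompose the resulting symmetric algebra by weight.

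First, I would invoke the exponential map $\exp \colon \Lie(U) \xrightarrow{\sim} U$, which is a $G_\bQ$-equivariant isomorphism of affine schemes respecting the weight filtration. Passing to coordinate rings and then to the weight-associated graded yields a $G_\bQ$-equivariant isomorphism of graded commutative algebras
\[
\gr^\W_\bullet \cO(U) \;\cong\; \Sym\bigl(\gr^\W_\bullet \Lie(U)^\vee\bigr) \,.
\]
Since $\gr^\W_{-k}\Lie(U) = V_k$ by definition, the dual $\gr^\W_\bullet\Lie(U)^\vee$ is the direct sum of the $V_k^\vee$, with $V_k^\vee$ sitting in grading degree $k$. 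Using that the symmetric algebra on a direct sum is the tensor product of the symmetric algebras on each summand, this rewrites as
\[
\gr^\W_\bullet\cO(U) \;\cong\; \bigotimes_{k\geq1} \Sym^\bullet\bigl(V_k^\vee[-k]\bigr)
\]
as graded $G_\bQ$-representations.

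The second step is to translate this isomorphism into an identity of Hilbert series in $K_0(\REP(G_\bQ))\llbrack t \rrbrack$. Since $\HS^\mot$ is multiplicative under tensor products, one has $\HS^\mot_U(t) = \prod_{k\geq 1}[\Sym^\bullet(V_k^\vee[-k])]$. The remark recorded immediately before the proposition then identifies each factor as $(1-t^k)^{-[V_k^\vee]}$, via the $\lambda$-ring identity $[\Sym^j V] = s^j([V])$ in $K_0(\REP(G_\bQ))$. Multiplying yields the claim.

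The one point that really requires attention is the opening appeal to the exponential isomorphism and its compatibility with the weight filtration in the pro-unipotent (rather than merely unipotent) setting. Once that is justified --- essentially by passing to the finite-dimensional unipotent quotients on which the result is standard, and checking that the weight filtration is determined levelwise --- the remainder of the argument is a purely formal manipulation in the Grothendieck ring, and I do not expect any further obstacles.
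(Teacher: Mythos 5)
Your proof is correct and follows essentially the same route as the paper: both appeal to the exponential isomorphism $U\cong\Lie(U)$ (in the paper's words, ``$U$ is isomorphic to its Lie algebra'') to identify $\gr^\W_\bullet\cO(U)$ with $\Sym(\gr^\W_\bullet\Lie(U)^\vee)$, decompose by weight, and translate via the identity $[\Sym(V[-k])]=(1-t^k)^{-[V]}$ recorded just before the statement. Your explicit flag of the need to check weight-compatibility of $\exp$ in the pro-unipotent setting is a reasonable point of care that the paper passes over without comment.
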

	
	\begin{remark}\label{rmk:some_V_i}
		Propositions~\ref{prop:motivic_hilbert_series} and~\ref{prop:hopf_alg_vs_lie_alg_series} allow one to recursively find universal (valid for any smooth curve over any field) formulas for $[V_k]\in K_0(\REP(G_\bQ))$ in terms of $[\HH_1^\et(X_{\bQbar};\bQ_p)]$, $[\HH_0^\et(D_{\bQbar};\bQ_p)]$, and the $\lambda$-operations by comparing coefficients. 
		
		For instance, setting $A \coloneqq [\HH_1^\et(X_{\bQbar};\bQ_p)]$ and $B \coloneqq [\HH_0^\et(D_{\bQbar};\bQ_p)(1)]-[\bQ_p(1)]$ for simplicity, we get the formulas:
		\begin{align*}
			[V_1] &= A \\
			[V_2] &= A^2 + B - s^2([V_1]) = \lambda^2(A)+B \\
			[V_3] &= A^3 + 2AB - s^3([V_1]) - [V_1][V_2] = A\lambda^2(A) + AB - \lambda^3(A) \\
			[V_4] &= A^4 + 3A^2B + B^2 - s^4([V_1]) - [V_2]s^2([V_1]) - [V_1][V_3] - s^2([V_2]) \\
			&= A^2\lambda^2(A) + A^2B -(\lambda^2(A))^2 + \lambda^2(B) \,,
		\end{align*}
		which in particular recover the well-known decompositions $[V_1] = [\HH_1^\et(X_{\bQbar};\bQ_p)]$ and $[V_2] = [\bigwedge^2 \HH_1^\et(X_{\bQbar};\bQ_p)]+[\HH_0^\et(D_{\bQbar};\bQ_p)(1)]-[\bQ_p(1)]$.\end{remark}
	
	\begin{remark}
		The relation used to derive the formulas  in Remark \ref{rmk:some_V_i} is equivalent to the relation
		\[
		[U[k]] = [\Gr^W_{-k} \Oc(U_n)^{\vee}] - [ \Gr^W_{-k}\operatorname{Sym}(\oplus_{i=1}^{k-1} U[i])] = \pr_{-k}([\Oc(U_n)^{\vee}] - [\operatorname{Sym}(\oplus_{i=1}^{k-1} U[i])])
		\]
		found in \cite[\S7.2]{CorwinMECK}. In fact, the formulas for $[V_3]$ and $[V_4]$ may be used to rederive the formulas of \cite[\S7.4-7.5]{CorwinMECK} by computing the lambda-operations in $K_0(\REP_{\Qp}^{\rm ss}(G_k,E)) \cong K_0(\REP_{\Qp}(\GL_2))$ (where $\REP_{\Qp}^{\rm ss}(G_k,E)$ is the Tannakian subcategory of $K_0(\REP(G_k))$ generated by the Tate module of a non-CM elliptic curve $E$).
	\end{remark}

	\subsection{Computing the local Hilbert series}
	
	Using Proposition~\ref{prop:motivic_hilbert_series}, we now explicitly determine the local Hilbert series~$\HS_\loc(t)$ from the introduction.
	
	\begin{lemma}\label{lem:local_series}
		\[
		\HS_\loc(t) = \frac{1-gt}{1-2gt-(n-1)t^2} \,.
		\]
	\end{lemma}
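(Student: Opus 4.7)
The plan is to deduce the formula from Propositions~\ref{prop:motivic_hilbert_series} and~\ref{prop:hopf_alg_vs_lie_alg_series} by passing from the Grothendieck ring of $G_\bQ$-representations to a polynomial ring via the Hodge--Tate character. Concretely, let $\chi$ be the $\lambda$-ring homomorphism defined on the subring of $G_\bQ$-representations that are de Rham at $p$ (which includes everything in play, since $p$ is a good prime) by
\[
\chi([V]) = \sum_i \bigl(\dim_{\bQ_p}\gr^i D_{\mathrm{dR}}(V|_{G_p})\bigr)\, s^i \in \bZ[s,s^{-1}].
\]
Applying $\chi$ to the identity of Proposition~\ref{prop:hopf_alg_vs_lie_alg_series} and noting that $\chi([V_k^\vee])$ is a polynomial in $s$ of degree $\leq k$ (since $V_k^\vee$ has Hodge--Tate weights in $\{0,\ldots,k\}$), I would obtain an identity in $\bZ[s]\llbrack t\rrbrack$:
\[
\chi(\HS^\mot_U(t)) = \prod_{k\geq 1}(1-t^k)^{-\chi([V_k^\vee])}.
\]

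Next, I would use the Bloch--Kato formula to compute the exponents $\dim \HH^1_f(G_p, V_k)$. Since $V_k$ is pure of weight~$-k$ and crystalline at~$p$, we have $V_k^{G_p}=0$, its Hodge--Tate weights lie in $\{-k,\ldots,0\}$, and $\operatorname{Fil}^0 D_{\mathrm{dR}}(V_k)$ is exactly the Hodge--Tate-weight-$0$ summand. Bloch--Kato then yields
\[
\dim \HH^1_f(G_p, V_k) = \dim V_k - h^0(V_k),
\]
where $h^0(V_k) \coloneqq \dim_{\bQ_p}\gr^0 D_{\mathrm{dR}}(V_k) = \dim_{\bQ_p}\gr^0 D_{\mathrm{dR}}(V_k^\vee)$. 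Specializing the $\chi$-identity at $s=1$ sends each $\chi([V_k^\vee])$ to $\dim V_k$, while specializing at $s=0$ sends it to $h^0(V_k)$; the ratio of the two specializations is therefore exactly $\HS_\loc(t)$.

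It remains to compute the two specializations in closed form by applying $\chi$ to the rational expression of Proposition~\ref{prop:motivic_hilbert_series}. Since $\HH^1_\et(X_{\bQbar},\bQ_p)$ has Hodge--Tate weights $0$ and $1$ each with multiplicity $g$, we have $\chi([\HH^1_\et(X_{\bQbar},\bQ_p)]) = g + gs$; and $([\HH^0_\et(D_{\bQbar},\bQ_p)]-1)(-1)$ is pure of Hodge--Tate weight $1$ and virtual dimension $n-1$, so $\chi(([\HH^0_\et(D_{\bQbar},\bQ_p)]-1)(-1)) = (n-1)s$. Hence
\[
\chi(\HS^\mot_U(t)) = \frac{1}{1 - g(1+s)\, t - (n-1)\, s\, t^2}.
\]
At $s=1$ this becomes $1/(1-2gt-(n-1)t^2)$ and at $s=0$ it becomes $1/(1-gt)$; the ratio is the claimed $(1-gt)/(1-2gt-(n-1)t^2)$.

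The main technical point to verify is that $\chi$ genuinely is a $\lambda$-ring homomorphism, so that it commutes with the symmetric-power operations implicit in the product $\prod_k(1-t^k)^{-[V_k^\vee]}$. This reduces to the splitting of $V|_{G_p}\otimes_{\bQ_p}\bC_p$ as a direct sum of Hodge--Tate lines; once that is in hand, the remainder of the argument is a routine pair of specializations in $\bZ[s]\llbrack t\rrbrack$.
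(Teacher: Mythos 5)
Your argument is correct and is essentially the paper's proof: your Hodge--Tate character $\chi$ followed by the two specializations $s\mapsto 1$ and $s\mapsto 0$ recovers exactly the pair of $\lambda$-ring homomorphisms $\sd_\dR,\sd_\dR^+\colon K_0(\REP_\cris^+(G_p))\to\bZ$, $[V]\mapsto\dim\sD_\dR(V^\vee)$ and $[V]\mapsto\dim\sD_\dR^+(V^\vee)$, that the paper applies to Propositions~\ref{prop:motivic_hilbert_series} and~\ref{prop:hopf_alg_vs_lie_alg_series}. The justification of the $\lambda$-ring property via exactness/strictness of $\sD_\dR$ (equivalently, the Hodge--Tate decomposition) and the invocation of the Bloch--Kato exponential $\HH^1_f(G_p,V_k)\cong\sD_\dR(V_k)/\sD_\dR^+(V_k)$ are likewise the same.
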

	
	In fact, this Hilbert series was already computed in \cite[Corollary~4.2.4]{alex:effective}, but we can give a different proof using Proposition~\ref{prop:motivic_hilbert_series}. For this, let $\REP_\cris^+(G_p)$ denote the subcategory of~$\REP(G_p)$ consisting of those representations~$V$ which are crystalline, have only non-negative Hodge--Tate weights, and such that all eigenvalues of the crystalline Frobenius on $\sD_\cris(V)$ are all $p$-Weil numbers of positive weight.
	
	The contravariant functors $V\mapsto\sD_\dR(V^\vee)$ and $V\mapsto\sD_\dR^+(V^\vee)$ are both exact $\otimes$-functors $\REP_\cris^+(G_p)\to\VEC_{\bQ_p}$ (the latter of which follows by Hodge strictness, cf.\ \cite[Theorem 8.2.11 and Proposition 9.2.14]{brinonconrad}), so they induce $\lambda$-ring homomorphisms $\sd_\dR,\sd_\dR^+\colon K_0(\REP_\cris^+(G_p))\to\bZ$, given by $\sd_\dR^{(+)}([V])\coloneqq\dim\sD_\dR^{(+)}(V^\vee)$. 
	\smallskip
	
	Now the restriction to~$G_p$ of all of the representations $V_k^\vee$, as well as $\HH^1_\et(X_{\bQbar};\bQ_p)$ and $\HH^0_\et(D_{\bQbar};\bQ_p)(-1)$, lie in $\REP_\cris^+(G_p)$. Hence the image of the Hilbert series $\HS^\mot_U(t)$ in $K_0(\REP(G_p))\llbrack t\rrbrack$ lies in $K_0(\REP_\cris^+(G_p))\llbrack t\rrbrack$. Applying the morphisms $\sd_\dR$ and $\sd_\dR^+$ to the identities in Propositions~\ref{prop:motivic_hilbert_series} and~\ref{prop:hopf_alg_vs_lie_alg_series} gives
	\begin{align*}
		\sd_\dR(\HS_{\mot}(t))  &= \prod_{k\geq1}(1-t^k)^{-\dim\sD_\dR(V_k)} = \frac1{1-2gt-(n-1)t^2} \,, \\
		\sd_\dR^+(\HS_{\mot}(t)) &= \prod_{k\geq1}(1-t^k)^{-\dim\sD_\dR^+(V_k)} = \frac1{1-gt} \,.
	\end{align*}
	For any~$V\in\REP_\cris^+(G_p)$, the Bloch--Kato exponential sequence gives
	\[
	\HH^1_f(G_p;V) \cong \sD_\dR(V)/\sD_\dR^+(V) \,,
	\]
	which implies that
	\[\HS_{\loc}(t) = \frac{\sd_\dR(\HS_{\mot}(t))}{\sd_\dR^+(\HS_{\mot}(t))} = \frac{1-gt}{1-2gt-(n-1)t^2}\,
	\]
	as desired. \qed
	
	\subsection{Computing the global Hilbert series}
	
	Now we turn to the rather harder problem of determining the global Hilbert series~$\HS_\glob(t)$. Our discussion parallels \cite[\S 2.1, 2.3]{CorwinMECK}. In order to carry out this computation, we use the following conjecture:
	\begin{conjecture}[{\cite[Prediction 4.1]{BellaicheNotes}}]\label{conj:BK}
		Suppose that~$V$ is a semisimple representation of~$G_\bQ$ which is de Rham at~$p$, and which is unramified and pure of weight~$w\geq 0$ at all but finitely many rational primes. Then
		\[
		\HH^1_g(G_\bQ;V)=0 \,.
		\]
		In particular, we also have $\HH^1_f(G_\bQ;V)=0$.
	\end{conjecture}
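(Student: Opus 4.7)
This is a form of the Bloch--Kato / Beilinson--Jannsen vanishing principle, and in the stated generality it appears to be largely open; my plan is to outline the natural approach and flag where the real difficulty lies.

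I would first reduce by semisimplicity to the case of $V$ irreducible, since all the hypotheses pass to direct summands. The next step would be to identify $\HH^1_g(G_\bQ,V)$ with $\HH^1_f(G_\bQ,V)$, which is a purely local matter at $p$: the quotient $\HH^1_g(G_p,V)/\HH^1_f(G_p,V)$ is controlled by $\sD_\cris(V^\vee(1))^{\phi=1}$, and since $V$ is pure of weight $w \geq 0$ the twist $V^\vee(1)$ is pure of weight $\leq -2$, so the eigenvalues of crystalline Frobenius all have nonzero $p$-adic absolute value and in particular none equals $1$. Hence the local quotient vanishes; since $\HH^1_g$ and $\HH^1_f$ both coincide with the unramified condition at all $\ell \neq p$, this upgrades to the global identification $\HH^1_g(G_\bQ,V) = \HH^1_f(G_\bQ,V)$.

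The main step would then be to show $\HH^1_f(G_\bQ,V) = 0$ for $V$ semisimple, de Rham at $p$, and pure of weight $w \geq 0$ at almost all primes. Conjecturally (Bloch--Kato), $\HH^1_f(G_\bQ,V)$ is an extension group in the category of mixed motives, and the Beilinson--Soul\'e vanishing conjecture predicts these Ext groups to be zero in this weight range. Unconditional results are restricted to special cases: for Tate twists $V = \bQ_p(n)$ with $n \leq 0$ the vanishing follows from finiteness of the class group together with Borel's theorem on the ranks of $K_{1-2n}(\bZ)$ via Soul\'e's Chern class map, and there are further partial results in CM and modular settings.

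The central obstacle is thus the general Bloch--Kato conjecture on Selmer groups of semisimple motivic representations of nonnegative weight, for which no current technique yields the required vanishing at the level of generality envisaged here --- this is exactly where I would expect the plan to get stuck, which is presumably why the authors content themselves with stating the result as a conjecture and using it as a hypothesis.
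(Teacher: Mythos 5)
You correctly recognize that this is a \emph{conjecture}, not a provable statement, and that the paper's authors content themselves with explaining why it is a consequence of standard (but open) conjectures. The paper's discussion cites Fontaine--Perrin-Riou's Conjecture 3.4.5(i), which relates $\dim\HH^1_g(G_\bQ,V)$ to $\ord_{s=0}L(V,s)$ and notes that $s=0$ lies in the region of absolute convergence when $w\geq 0$, and alternatively Bloch--Kato's Conjecture 5.3(i), which identifies $\HH^1_g$ with negative-degree algebraic $K$-theory, which vanishes. Your appeal to Beilinson--Soul\'e-type vanishing of Ext groups in mixed motives is an equivalent heuristic and is in the same circle of ideas, just phrased through a different formulation of the conjecture.

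Two small points. First, your local step showing $\HH^1_g(G_p,V)=\HH^1_f(G_p,V)$ is correct in substance (the obstruction lives in $\sD_\cris(V^\vee(1))^{\varphi=1}$, and purity of weight $\leq -2$ forbids the Frobenius eigenvalue $1$), but the justification should invoke the \emph{archimedean} absolute values of the Weil-number eigenvalues being $<1$, not ``nonzero $p$-adic absolute value'' --- all nonzero eigenvalues have nonzero $p$-adic absolute value, so as stated the reason doesn't bite. Second, the paper's ``in particular'' is the trivial inclusion $\HH^1_f\subseteq\HH^1_g$, so you do not actually need to argue that the two agree; that step, while true and sometimes useful, is doing no work toward the stated implication.
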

	
	This conjecture is known as a consequence of the ``Bloch--Kato conjectures'' (\cite{BellaicheNotes}), although its formulation for a general geometric Galois representation appears only in subsequent work of Fontaine--Perrin-Riou. More precisely, it follows from \cite[Conjectures 3.4.5(i)]{FPR91}, which relates $\dim \HH^1_g(G_\bQ;V)$ to $\ord_{s=0} L(V,s)$, which is in the region of absolute convergence when $w \ge 0$. When $V$ is given as a direct summand of a twist of the \'etale cohomology of a smooth proper variety (which is indeed possible in our case), \cite[Conjecture 5.3(i)]{BlochKato} would imply that $\HH^1_g(G_\bQ;V)$ comes from negative-degree algebraic $K$-theory of the variety in question, which vanishes by definition.
	
	On a more philosophical level, this conjecture follows from the idea that all geometric Galois representations should come from geometry and therefore possess a weight filtration (with semisimple weight-graded pieces, by the Grothendieck--Serre conjecture). This philosophy is described in \cite[\S 2.3.4, \S 4.1.3]{BellaicheNotes}, and a precise conjecture is \cite[``Conjecture'' 12.4]{Fontaine91}.
	
	One may use Conjecture \ref{conj:BK} to compute $\dim \HH^1_f(G_\bQ;V)$ for $w \neq -1$ when paired with the following consequence of Poitou-Tate duality:
	
	\begin{fact}[{\cite[II.2.2.2]{FPR91}}]\label{fact:PT}
		For a geometric Galois representation $V$ over a number field $k$, we have
		\begin{align*}\label{eqn:Poitou-Tate}
			\dim \HH^1_f(G_k;V) = \dim \HH^0(G_k;V) + \dim \HH^1_f(G_k;V^{\vee}(1)) - \dim \HH^0(G_k;V^{\vee}(1))\\  + \sum_{v \mid p} \dim_{\Qp} (\operatorname{D}_{\dR}{V}/\operatorname{D}^+_{\dR}{V})
			- \sum_{v \mid \infty} \dim \HH^0(G_v;V)
			.\end{align*}
	\end{fact}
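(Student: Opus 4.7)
The plan is to derive this identity from Poitou--Tate global duality combined with the Bloch--Kato formalism of local conditions. Let $S$ be a finite set of places of~$k$ containing the archimedean places, the places above~$p$, and every prime at which $V$ ramifies. For $v\in S$ put $L_v\coloneqq\HH^1_f(G_v,V)\subseteq\HH^1(G_v,V)$. The central input is the Bloch--Kato orthogonality theorem: under the local Tate pairing $\HH^1(G_v,V)\times\HH^1(G_v,V^\vee(1))\to\bQ_p$, the subspaces $L_v$ and $L_v^\perp\coloneqq\HH^1_f(G_v,V^\vee(1))$ are exact annihilators of each other. At $v\nmid p$ finite this reduces to unramified local duality; at $v\mid\infty$ both sides vanish; at $v\mid p$ it follows from the self-duality of the fundamental exact sequence defining $\HH^1_f$ in terms of $\sD_\dR$.

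With orthogonality in hand, the next step is to apply the Greenberg--Wiles / generalized Euler-characteristic formula for Selmer groups with mutually annihilating local conditions, which in our $\bQ_p$-linear setting asserts
\[
\dim\HH^1_f(G_k,V)-\dim\HH^1_f(G_k,V^\vee(1))=\dim\HH^0(G_k,V)-\dim\HH^0(G_k,V^\vee(1))+\sum_{v\in S}\bigl(\dim L_v-\dim\HH^0(G_v,V)\bigr).
\]
This comes from splicing the nine-term Poitou--Tate exact sequence with the defining exact sequence of the Selmer group, noting that $\HH^2_S$ of $V$ is Tate-dual to $\HH^0_S$ of $V^\vee(1)$ and so is absorbed into the global terms under the annihilator hypothesis.

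The last step is a place-by-place evaluation of the local difference $c_v\coloneqq\dim L_v-\dim\HH^0(G_v,V)$. At archimedean~$v$ we have $\HH^1_f(G_v,V)=0$, so $c_v=-\dim\HH^0(G_v,V)$, which accounts for the last sum in the stated identity. At finite $v\nmid p$ the Bloch--Kato convention $L_v=\HH^1_{\mathrm{nr}}(G_v,V)$ combined with the cyclic inflation--restriction sequence for $G_v/I_v$ acting on $V^{I_v}$ yields $\dim\HH^1_{\mathrm{nr}}=\dim V^{G_v}$, so $c_v=0$. Finally, at $v\mid p$ the Bloch--Kato fundamental exact sequence obtained by tensoring $V$ with $0\to\bQ_p\to B_\cris^{\varphi=1}\to B_\dR/B_\dR^+\to 0$ and taking $G_v$-invariants produces $c_v=\dim_{\bQ_p}(\sD_\dR V/\sD_\dR^+V)$. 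Substituting these local contributions into the Greenberg--Wiles identity gives the formula. The main obstacle is the Bloch--Kato theory at $v\mid p$: both the orthogonality of $L_v$ under local Tate duality and the dimension formula for $c_v$ rest on the comparison isomorphisms and the self-duality of the fundamental exact sequence, while everything else is routine bookkeeping from Poitou--Tate.
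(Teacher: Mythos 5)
The paper does not prove this statement at all: it is labelled a ``Fact'' and the cited reference \cite[II.2.2.2]{FPR91} is the proof. Your derivation via Poitou--Tate and the Bloch--Kato formalism is the standard one and is correct, and it is essentially the argument Fontaine--Perrin-Riou give. The three ingredients you identify --- orthogonality of $\HH^1_f(G_v,V)$ and $\HH^1_f(G_v,V^\vee(1))$ under local Tate duality, the Greenberg--Wiles-type Euler characteristic formula for dually paired local conditions, and the place-by-place evaluation of $\dim L_v-\dim\HH^0(G_v,V)$ (which is $0$ at finite $v\nmid p$ by unramified duality, $-\dim\HH^0(G_v,V)$ at archimedean $v$ since $\HH^1(G_v,V)=0$ for a $\bQ_p$-vector space, and $\dim\sD_\dR(V)/\sD_\dR^+(V)$ at $v\mid p$ by the fundamental exact sequence) --- are exactly the right ones and are correctly assembled. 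Two small things worth tightening if you were to write this out fully: the Greenberg--Wiles formula is usually proved for finite Galois modules, so you need either a lattice/limit argument or a direct $\bQ_p$-linear Poitou--Tate sequence to justify it with $\bQ_p$-coefficients; and your gloss that ``$\HH^2_S$ of $V$ is Tate-dual to $\HH^0_S$ of $V^\vee(1)$'' is not literally the duality that appears (the nine-term sequence relates compactly supported and ordinary cohomology over $\cO_{k,S}$), though the conclusion you draw from it is right.
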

	
	Finally, the only weight $-1$ case we consider is that of $V_1$, in which case the Tate--Shafarevich Conjecture computes $\dim \HH^1_f(G_\bQ;V)$. We summarise these computations in a lemma:
	
	\begin{lemma}\label{lem:global_dimensions}
		Assume the Tate--Shafarevich and Bloch--Kato Conjectures. Then
		\[
		\dim\HH^1_f(G_\bQ;V_k) = 
		\begin{cases}
			r & \text{if $k=1$,} \\
			\dim\HH^1_f(G_p;V_2)-\dim V_2^\sigma+1-\rho-\#|D| & \text{if $k=2$,} \\
			\dim\HH^1_f(G_p;V_k)-\dim V_k^\sigma & \text{if $k\geq3$.}
		\end{cases}
		\]
		where~$\sigma\in G_\bQ$ is complex conjugation (with respect to some embedding $\bQbar\hookrightarrow\bC$).
		\begin{proof}
			For~$k=1$, $V_1$ is the $\bQ_p$-linear Tate module of the Jacobian~$J$ of~$X$, so $\dim\HH^1_f(G_\bQ;V_1)$ is the $p^\infty$-Selmer rank of~$J$. Assuming the Tate--Shafarevich Conjecture, this is equal to the Mordell--Weil rank~$r$.
			
			For~$k\geq2$, we use Fact \ref{fact:PT}. We have $V_k^{G_\bQ}=0$ since $V_k$ is pure of weight~$-k$, and Conjecture \ref{conj:BK} provides the vanishing of $\HH^1_f(G_\bQ;V_k^\vee(1))$. If $k\geq3$, then also $(V_k^\vee(1))^{G_\bQ}=0$ for weight reasons, and we are done in this case.
			
			It remains to show that $\dim(V_2^\vee(1))^{G_\bQ}=\rho+\#|D|-1$. For this, note that $V_2^\vee(1)$ is a semisimple $G_\bQ$-representation \cite[Lemma~6.0.1]{alex:effective}, and we have the identity
			\[
			[V_2^\vee(1)] = [\Hom(\bigwedge\nolimits^2V_1,\bQ_p(1))] + [\bQ_p^{D(\bQbar)}] - 1
			\]
			in the Grothendieck group of semisimple $G_\bQ$-representations, as in Remark~\ref{rmk:some_V_i}. So we just need to compute the dimension of the $G_\bQ$-fixed part of both terms on the right-hand side. The latter of these is easy: the dimension of the $G_\bQ$-invariant subspace in the permutation representation $\bQ_p^{D(\bQbar)}$ is equal to the number of $G_\bQ$-orbits in $D(\bQbar)$, which is $\#|D|$.
			
			For the former, we write $\Hom(\bigwedge^2V_1,\bQ_p(1))\cong\Hom(V_1,V_1^\vee(1))^{-}$, where the superscript ${-}$ denotes those maps $\phi\colon V_1\to V_1^\vee(1)$ such that $\phi^\vee(1)=-\phi$ under the canonical identification $(V_1^\vee(1))^\vee(1)\cong V_1$. By the Tate Conjecture for abelian varieties \cite[Theorem~(b)]{Faltings83}, the canonical map
			\[
			\bQ_p\otimes_{\bZ}\Hom(J,J^\vee)^{+}\to\Hom_{G_\bQ}(V_1,V_1^\vee(1))^{-}
			\]
			is an isomorphism, where the superscript~$+$ denotes those morphisms $\phi\colon J\to J^\vee$ of abelian varieties such that $\varphi^\vee=\varphi$ under the canonical identification $J^{\vee\vee}\cong J$ (i.e.\ $\phi$ is symmetric with respect to the Rosati involution arising from the principal polarisation on~$J$). The left-hand group is isomorphic to the $\bQ$-points of the N\'eron--Severi group-scheme, so we have $\dim_{\bQ_p}\Hom_{G_\bQ}(\bigwedge^2V_1,\bQ_p(1))=\rho$ as desired.
		\end{proof}
	\end{lemma}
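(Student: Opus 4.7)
The plan is to handle the three cases by a combination of weight arguments, Poitou--Tate duality, and Faltings' theorem. For $k=1$, note that $V_1$ is identified with the $\bQ_p$-linear Tate module of the Jacobian $J$, so $\HH^1_f(G_\bQ,V_1)$ is $\bQ_p\otimes$ the $p^\infty$-Selmer group of $J$. Assuming Tate--Shafarevich, the Selmer rank coincides with the Mordell--Weil rank $r$, giving the first case immediately.

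For $k\geq 2$, I would apply Fact~\ref{fact:PT} to $V=V_k$ and analyze each of the five terms. Since $V_k$ is pure of weight $-k<0$, one has $V_k^{G_\bQ}=0$. The local $p$-adic term collapses to $\dim\HH^1_f(G_p,V_k)$ via the Bloch--Kato exponential sequence used in the computation of $\HS_\loc(t)$. At infinity, the only archimedean place contributes exactly $\dim V_k^\sigma$. Finally, $V_k^\vee(1)$ is pure of weight $k-2$, so when $k\geq 3$ it has positive weight; then Conjecture~\ref{conj:BK} kills $\HH^1_f(G_\bQ,V_k^\vee(1))$, and weight reasons kill $(V_k^\vee(1))^{G_\bQ}$. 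This yields the third case directly.

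The remaining work is the case $k=2$, where $V_2^\vee(1)$ is of weight $0$ and can carry genuine $G_\bQ$-invariants. Here I would use the decomposition from Remark~\ref{rmk:some_V_i} to write
\[
[V_2^\vee(1)] = [\Hom(\tfrac{}{}\bigwedge\nolimits^2 V_1,\bQ_p(1))] + [\bQ_p^{D(\bQbar)}] - [\bQ_p]
\]
in the Grothendieck group of semisimple $G_\bQ$-representations, appealing to the semisimplicity of $V_2^\vee(1)$ (known independently). Taking $G_\bQ$-invariants, the last two summands contribute $\#|D|$ and $1$ respectively. For the first, dualizing via $\Hom(\bigwedge^2 V_1,\bQ_p(1)) \cong \Hom(V_1,V_1^\vee(1))^{-}$ and applying Faltings' Tate Conjecture for abelian varieties identifies the $G_\bQ$-invariants with $\bQ_p\otimes\Hom(J,J^\vee)^{+}$, i.e.\ with Rosati-symmetric homomorphisms $J\to J^\vee$, which is exactly $\bQ_p\otimes\mathrm{NS}(J)$, of dimension $\rho$. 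Assembling, $\dim(V_2^\vee(1))^{G_\bQ} = \rho + \#|D| - 1$, and substitution into Poitou--Tate gives the second case.

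The main obstacle in this plan is the identification of $G_\bQ$-fixed vectors in $\Hom(\bigwedge^2 V_1,\bQ_p(1))$ with the Néron--Severi group: this rests on Faltings' theorem and the symmetry condition coming from the Rosati involution attached to the principal polarization of $J$. The weight vanishing arguments and Poitou--Tate accounting are routine once one accepts Conjecture~\ref{conj:BK}; what actually requires substantive input is this single step linking Galois-equivariant bilinear forms on $V_1$ to the geometry of $J$.
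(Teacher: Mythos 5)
Your proposal is correct and follows essentially the same route as the paper: Tate--Shafarevich for $k=1$, Poitou--Tate plus weight vanishing plus Conjecture~\ref{conj:BK} for $k\geq 3$, and for $k=2$ the decomposition from Remark~\ref{rmk:some_V_i} combined with Faltings' Tate conjecture to identify $\Hom_{G_\bQ}(\bigwedge^2 V_1,\bQ_p(1))$ with $\bQ_p\otimes\NS(J)(\bQ)$. Your explicit accounting of the local term $\dim\HH^1_f(G_p,V_k)$ and the archimedean term $\dim V_k^\sigma$ in Poitou--Tate is a detail the paper leaves implicit, but the argument is the same.
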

	
	Motivated by Lemma~\ref{lem:global_dimensions}, we introduce the Hilbert series
	\[
	\HS_\bR(t) \coloneqq \prod_{k\geq1}(1-t^k)^{-\dim V_k^\sigma} \in \bN_0\llbrack t\rrbrack \,.
	\]
	This allows us to rewrite Lemma~\ref{lem:global_dimensions} in Hilbert series form.
	
	\begin{lemma}\label{lem:global_series}
		Assume the Tate--Shafarevich and Bloch--Kato Conjectures. Then
		\[
		\HS_\glob(t) = (1-t)^{-r}\cdot(1-t^2)^{\rho+\#|D|-1-s}\cdot\frac{1-gt}{1-2gt-(n-1)t^2}\cdot\HS_\bR(t)^{-1} \,.
		\]
	\end{lemma}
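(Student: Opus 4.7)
The plan is to substitute the dimension formulas of Lemma~\ref{lem:global_dimensions} into the definition
\[
\HS_\glob(t) = (1-t^2)^{-s}\prod_{k\geq1}(1-t^k)^{-\dim\HH^1_f(G_\bQ,V_k)}
\]
and then to reorganise the resulting infinite product into the four factors of the statement. The $k=1$ term contributes $(1-t)^{-r}$ directly. For $k=2$, the exponent splits as $\bigl(\dim\HH^1_f(G_p,V_2)-\dim V_2^\sigma\bigr)+(1-\rho-\#|D|)$; the second summand, pulled outside, combines with the $(1-t^2)^{-s}$ prefactor to yield $(1-t^2)^{\rho+\#|D|-1-s}$. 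What remains is the product $\prod_{k\geq2}(1-t^k)^{-\dim\HH^1_f(G_p,V_k)+\dim V_k^\sigma}$, which I would rewrite, by formally extending both products back to $k\geq1$, as
\[
\HS_\loc(t)\cdot\HS_\bR(t)^{-1}\cdot(1-t)^{\dim\HH^1_f(G_p,V_1)-\dim V_1^\sigma}.
\]

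The one substantive (non-bookkeeping) step is to show that the residual factor is trivial, i.e.\ that $\dim\HH^1_f(G_p,V_1) = \dim V_1^\sigma$. Both sides equal $g$. On the local side, the Bloch--Kato exponential together with the good reduction of $X$ at $p$ gives $\HH^1_f(G_p,V_1)\cong\sD_\dR(V_1)/\sD_\dR^+(V_1)\cong\Lie(J_{\bQ_p})$, which has dimension $g$; equivalently, this dimension is the coefficient of $t$ in $\HS_\loc(t)=(1-gt)/(1-2gt-(n-1)t^2)$, as determined in the proof of Lemma~\ref{lem:local_series}. On the archimedean side, $V_1=\HH_1^\et(X_{\bQbar},\bQ_p)$ is identified via comparison with $\HH_1(J(\bC),\bQ_p)$, on which complex conjugation acts through the antiholomorphic involution $z\mapsto\bar z$ on $J(\bC)=\bC^g/\Lambda$; that involution is $\bR$-linear of signature $(g,g)$ on the universal cover, so $\dim V_1^\sigma=g$.

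Once this single equality is in hand, substituting $\HS_\loc(t)=(1-gt)/(1-2gt-(n-1)t^2)$ from Lemma~\ref{lem:local_series} assembles the four pieces into the claimed formula. The proof is thus essentially a formal manipulation of infinite products; the only geometric input beyond Lemmas~\ref{lem:global_dimensions} and~\ref{lem:local_series} is the bottom-weight identity $\dim\HH^1_f(G_p,V_1)=\dim V_1^\sigma=g$, which is what allows the asymmetric treatment of $V_1$ in Lemma~\ref{lem:global_dimensions} to be absorbed cleanly into the tails $\HS_\loc(t)$ and $\HS_\bR(t)^{-1}$.
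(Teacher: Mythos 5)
Your proposal is correct and matches the paper's (essentially unstated) argument: the paper simply says that Lemma~\ref{lem:global_series} is Lemma~\ref{lem:global_dimensions} ``in Hilbert series form.'' You correctly identify the one non-trivial reconciliation step that the paper leaves implicit, namely that extending the product in $\HS_\loc(t)\cdot\HS_\bR(t)^{-1}$ back to $k=1$ requires $\dim\HH^1_f(G_p,V_1)=\dim V_1^\sigma=g$; both of your verifications of this (via the coefficient of $t$ in $\HS_\loc$, and via the signature of complex conjugation on $\HH_1(J(\bC),\bQ_p)$) are sound.
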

	
	Unlike all the other Hilbert series we have seen, it seems that $\HS_\bR(t)$ is not in general expressible in closed form. As a replacement for an explicit expression, we prove that it satisfies a functional equation, which also happens to determine it uniquely. In the statement of this functional equation and what follows, we let~$n_1\coloneqq\# D(\bR)$ and $2n_2\coloneqq n-n_1$.
	
	\begin{lemma}\label{lem:functional_equation}
		The series $\HS_\bR(t)$ satisfies the functional equation
		\[
		\HS_\bR(t)^2 = \frac{1-2gt^2-(n-1)t^4}{(1-(n_1-1)t^2)(1-2gt-(n-1)t^2)}\cdot\HS_\bR(t^2) \,.
		\]
	\end{lemma}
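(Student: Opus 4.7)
The strategy mirrors the proof of Lemma~\ref{lem:local_series}, applying $\bZ$-valued ring homomorphisms on $K_0(\REP(G_\bQ))$ to the identities of Propositions~\ref{prop:motivic_hilbert_series} and~\ref{prop:hopf_alg_vs_lie_alg_series}. I would use two homomorphisms: $\dim$, which recovers the identity $\prod_k(1-t^k)^{-\dim V_k} = \frac{1}{1-2gt-(n-1)t^2}$ and hence the total Hilbert series $\HS_U(t)$, and the trace of complex conjugation $\operatorname{tr}(\sigma\mid -)\colon K_0(\REP(G_\bQ)) \to \bZ$. The latter is a ring homomorphism by multiplicativity of trace under tensor products, but \emph{not} a $\lambda$-ring homomorphism, so it must be applied termwise to the coefficients in $K_0(\REP(G_\bQ))\llbrack t\rrbrack$ rather than by substitution into exponents.

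Applying $\operatorname{tr}(\sigma\mid -)$ to the closed form of Proposition~\ref{prop:motivic_hilbert_series} uses three inputs: the Lefschetz fixed-point formula, which gives $\operatorname{tr}(\sigma\mid\HH^1_\et(X_{\bQbar},\bQ_p))=0$ because $X(\bR)$ is a disjoint union of circles and so has vanishing Euler characteristic; the permutation action, which gives $\operatorname{tr}(\sigma\mid\HH^0_\et(D_{\bQbar},\bQ_p))=n_1$; and the fact that $\sigma$ inverts roots of unity, giving $\operatorname{tr}(\sigma\mid\bQ_p(-1))=-1$. These combine into a rational function whose denominator is quadratic in $t$. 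Decomposing each $V_k = V_k^\sigma \oplus V_k^{-\sigma}$ into $\sigma$-eigenspaces with dimensions $a_k = \dim V_k^\sigma$ and $b_k = \dim V_k^{-\sigma}$, an eigenvalue calculation in the symmetric powers of $V_k^\vee$ gives
\[
\sum_i \operatorname{tr}(\sigma\mid\operatorname{Sym}^i V_k^\vee)\, t^i = (1-t)^{-a_k}(1+t)^{-b_k},
\]
so applying $\operatorname{tr}(\sigma\mid -)$ termwise to Proposition~\ref{prop:hopf_alg_vs_lie_alg_series} gives $\prod_k(1-t^k)^{-a_k}(1+t^k)^{-b_k}$ equal to the same rational function. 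Likewise, applying $\dim$ to Proposition~\ref{prop:hopf_alg_vs_lie_alg_series} yields $\prod_k(1-t^k)^{-(a_k+b_k)} = \HS_U(t)$.

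The functional equation then follows by formal manipulation, driven by the identity $(1-t^k)(1+t^k)=1-t^{2k}$. Multiplying the trace-identity by the dimension-identity and regrouping factors as $\prod_k(1-t^k)^{-2a_k}\cdot\prod_k(1-t^k)^{-b_k}(1+t^k)^{-b_k}$ expresses the product as $\HS_\bR(t)^2 \cdot \prod_k(1-t^{2k})^{-b_k} = \HS_\bR(t)^2\cdot\bigl(\HS_U(t^2)/\HS_\bR(t^2)\bigr)$; solving for $\HS_\bR(t)^2$ produces the stated functional equation, upon substituting in the rational closed forms for $\HS_U(t)$, $\HS_U(t^2)$, and the trace-identity. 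Beyond the conceptual point that $\operatorname{tr}(\sigma\mid -)$ must be handled termwise rather than as a $\lambda$-ring morphism, the only care required is in tracking the signs arising from the $\sigma$-action on the Tate twist $\bQ_p(-1)$.
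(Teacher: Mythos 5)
Your proposal is correct and matches the paper's proof essentially step for step: what you call $\operatorname{tr}(\sigma\mid-)$ is the ring homomorphism the paper calls $\sgn$ on $K_0(\REP(G_\infty))\cong\bZ[\xi]/(\xi^2-1)$, your eigenvalue computation in symmetric powers is exactly Sublemma~\ref{sublem:sgn_power}, and the concluding regrouping via $(1-t^k)(1+t^k)=1-t^{2k}$ is the paper's manipulation.

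One caution before you assert that this ``produces the stated functional equation'': with your values $\operatorname{tr}(\sigma\mid\HH^1)=0$, $\operatorname{tr}(\sigma\mid\HH^0_\et(D_{\bQbar},\bQ_p))=n_1$ and $\operatorname{tr}(\sigma\mid\bQ_p(-1))=-1$, the traced denominator from Proposition~\ref{prop:motivic_hilbert_series} comes out as $1+(n_1-1)t^2$ (this is in fact what the paper's second displayed identity in the proof shows), and carrying that through the product step yields $1+(n_1-1)t^2$ in the functional equation, not the $1-(n_1-1)t^2$ of the lemma statement. A low-order check for $Y=\bP^1\setminus\{0,1,\infty\}$ (where $g=0$, $n=n_1=3$, $\dim V_2^\sigma=0$, $\dim V_4^\sigma=1$) is consistent with the $1+(n_1-1)t^2$ form and not the other. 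So your derivation, done carefully, does \emph{not} literally reproduce the lemma as printed; the sign in the lemma statement (and in the ``taking the product'' display of the paper's proof, which contradicts the line immediately above it) appears to be a typo, and you would want to re-examine how the corrected sign feeds into Lemmas~\ref{lem:product_expansion_of_HS_R} and~\ref{lem:functional_bound} downstream.
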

	
	We will again deduce this functional equation from Propositions~\ref{prop:motivic_hilbert_series} and~\ref{prop:hopf_alg_vs_lie_alg_series}, though the argument is more complicated.
	
	Let $\REP(G_\infty)$ denote the $\otimes$-category of representations of $G_\infty=C_2$. The Grothendieck ring $K_0(\REP(G_\infty))$ is isomorphic to $\bZ[\xi]/(\xi^2-1)$, where $\xi$ is the class of the non-trivial one-dimensional representation of~$G_\infty$. There are two ring homomorphisms $K_0(\REP(G_\infty))\to\bZ$, namely $\dim\colon a+b\xi\mapsto a+b$ and $\sgn\colon a+b\xi\mapsto a-b$, whose average sends $[V]$ to $\dim V^{\sigma}$. The map~$\dim$, as the name suggests, takes the class of a $G_\infty$-representation~$V$ to $\dim(V)$, and hence $\dim$ is a homomorphism of $\lambda$-rings. The map~$\sgn$, however, is not.
	
	\begin{sublemma}\label{sublem:sgn_power}
		For all $a+b\xi\in K_0(\REP(G_\infty))$, we have the identity
		\[
		\sgn\left((1-t)^{-a-b\xi}\right) = (1-t)^{-a}(1+t)^{-b}
		\]
		of formal power series in~$t$.
		\begin{proof}
			The symmetric power operations on $K_0(\REP(G_\infty))$ are given by
			\[
			s^k(a+b\xi) = \sum_{i+j=k}s^i(a)s^j(b)\xi^j \,,
			\]
			where inside the summation $s^i(a)={{a+i-1}\choose{i-1}}$ denotes the usual symmetric power operations in the $\lambda$-ring $\bZ$. According to the definition, we have
			\[
			(1-t)^{-a-b\xi} = \sum_{k\geq0}s^k(a+b\xi)t^k = \left(\sum_{i\geq0}s^i(a)t^i\right)\left(\sum_{j\geq0}s^j(b)\xi^jt^j\right) \,.
			\]
			Applying $\sgn$ to both sides yields the claimed equality.
		\end{proof}
	\end{sublemma}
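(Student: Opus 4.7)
The plan is a direct computation using the definition $(1-t)^{-\nu} = \sum_{k \geq 0} s^k(\nu) t^k$ together with the $\lambda$-ring structure on $K_0(\REP(G_\infty))$. The key conceptual point is that $\sgn$ is a ring homomorphism but \emph{not} a $\lambda$-ring homomorphism, so one cannot simply pull $\sgn$ through $s^k$; instead one must first compute $s^k(a+b\xi)$ explicitly inside $K_0(\REP(G_\infty))$ and only then apply $\sgn$ coefficient by coefficient.

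First I would invoke the additivity of symmetric powers in any $\lambda$-ring, namely $s^k(x+y) = \sum_{i+j=k} s^i(x) s^j(y)$, applied to $x=a$ and $y=b\xi$, to reduce the computation to that of $s^j(b\xi)$. For this factor I would use the concrete realisation: $b\xi$ is the class of a $b$-dimensional representation $W$ on which $G_\infty$ acts by the sign character, and $\Sym^j W$ has dimension $\binom{b+j-1}{j}=s^j(b)$ with $G_\infty$ acting by $(-1)^j$, so $s^j(b\xi)=s^j(b)\xi^j$. Combining these gives
\[
s^k(a+b\xi) = \sum_{i+j=k} s^i(a)\, s^j(b)\, \xi^j,
\]
and summing against $t^k$ factors the generating series as
\[
(1-t)^{-a-b\xi} \;=\; \left(\sum_{i\geq 0} s^i(a)\, t^i\right)\!\left(\sum_{j\geq 0} s^j(b)\, \xi^j t^j\right).
\]

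Finally I would apply the ring homomorphism $\sgn$, which satisfies $\sgn(\xi)=-1$. The first factor is unchanged and equals $(1-t)^{-a}$, while the second becomes $\sum_{j\geq 0} s^j(b)(-t)^j = (1-(-t))^{-b} = (1+t)^{-b}$, producing the asserted identity. There is no genuine obstacle here; the only subtlety is the careful bookkeeping that distinguishes the ring-homomorphism behaviour of $\sgn$ from its failure to preserve $\lambda$-operations, and it is exactly this failure that manifests as the substitution $t\mapsto -t$ in the $b$-factor.
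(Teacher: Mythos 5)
Your proof is correct and follows essentially the same route as the paper: establish the formula $s^k(a+b\xi)=\sum_{i+j=k}s^i(a)s^j(b)\xi^j$, factor the generating series, and apply the ring homomorphism $\sgn$ using $\sgn(\xi)=-1$. You supply a bit more justification for the key formula (additivity of $s^k$ plus the concrete computation $s^j(b\xi)=s^j(b)\xi^j$ via symmetric powers of a sign representation), whereas the paper simply asserts it; incidentally, your binomial coefficient $\binom{b+j-1}{j}$ is the correct one, and the paper's $\binom{a+i-1}{i-1}$ is a typo.
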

	
	\begin{proof}[Proof of Lemma~\ref{lem:functional_equation}]
		From Propositions~\ref{prop:motivic_hilbert_series} and~\ref{prop:hopf_alg_vs_lie_alg_series}, we see that the image of $\HS^\mot_U(t)$ inside $K_0(\REP(G_\infty))\llbrack t\rrbrack$ is given by
		\[
		\frac1{1-(g+g\xi)t-(n_1\xi+n_2+n_2\xi-\xi)t^2} = \prod_{k\geq1}(1-t^k)^{-[V_k]} \,.
		\]
		Applying the maps $\dim$ and $\sgn$, and using Sublemma~\ref{sublem:sgn_power} we obtain the identities
		\begin{align*}
			\frac1{1-2gt-(n-1)t^2} = \prod_{k\geq1}(1-t^k)^{-\dim V_k} 
			&= \prod_{k \ge 1} (1-t^k)^{-\dim V_k^{\sigma}}\cdot \prod_{k \ge 1} (1-t^k)^{-\dim V_k^{-\sigma}}\,, \\
			\frac1{1+(n_1-1)t^2} &= \prod_{k\geq1}(1-t^k)^{-\dim V_k^\sigma}\cdot\prod_{k\geq1}(1+t^k)^{-\dim V_k^{-\sigma}} \,.
		\end{align*}
		Taking the product of these identities gives
		\[
		\frac1{(1-(n_1-1)t^2)(1-2gt-(n-1)t^2)}=\HS_\bR(t)^2\cdot\prod_{k\geq1}(1-t^{2k})^{-\dim V_k^{-\sigma}} \,.
		\]
		Multiplying through by $\HS_\bR(t^2)$ then gives
		\[
		\frac{\HS_\bR(t^2)}{(1-(n_1-1)t^2)(1-2gt-(n-1)t^2)}=\frac{\HS_\bR(t)^2}{1-2gt^2-(n-1)t^4}
		\]
		which rearranges to the desired identity.
	\end{proof}
	
	The functional equation gives an expression for~$\HS_\bR(t)$ as an infinite product, convergent in the $t$-adic topology.
	
	\begin{lemma}\label{lem:product_expansion_of_HS_R}
		We have
		\[
		\HS_\bR(t) = \prod_{j=0}^\infty G(t^{2^j})^{1/2^{j+1}} \,,
		\]
		where
		\[
		G(t) = \frac{1-2gt^2-(n-1)t^4}{(1-(n_1-1)t^2)(1-2gt-(n-1)t^2)} \,.
		\]
		Moreover, each factor~$G(t^{2^j})^{1/2^{j+1}}$ has non-negative coefficients and constant coefficient~$1$.
		\begin{proof}
			For the first part, observe that~$\HS_\bR(t)$ is uniquely determined by the functional equation of Lemma~\ref{lem:functional_equation} and the fact it has constant coefficient~$1$. The above infinite product satisfies the same functional equation and has constant coefficient~$1$, so is equal to~$\HS_\bR(t)$.
			
			For the second part, we compute
			\begin{align*}
				\log G(t) &= \log(1-2gt^2-(n-1)t^4) - \log(1-(n_1-1)t^2) - \log(1-2gt-(n-1)t^2) \\
				&= \sum_{i=1}^\infty (\alpha^i+\beta^i)\frac{t^i}i - \sum_{i=1}^\infty(\alpha^i+\beta^i-(n_1-1)^i)\frac{t^{2i}}i \,,
			\end{align*}
			where~$\alpha$ and~$\beta$ are the roots of the polynomial $T^2-2gT-(n-1)$. We claim that~$\log G(t)$ has non-negative coefficients, i.e.\
			\[
			\alpha^i+\beta^i \geq0 \hspace{0.4cm}\text{and}\hspace{0.4cm} \alpha^{2i}+\beta^{2i}+2(n_1-1)^i \geq 2(\alpha^i+\beta^i)
			\]
			for all~$i\geq1$. For this, we consider the sequence $c_i=\frac12(\alpha^i+\beta^i)$; this has $c_0=1$, $c_1=g$ and satisfies the recurrence relation
			\begin{equation}\label{eq:recurrence_easy}\tag{$\ast$}
				c_{i+2} = 2gc_{i+1} + (n-1)c_i
			\end{equation}
			for~$i\geq0$. If~$g=0$ then $c_i=(n-1)^{i/2}$ if~$i$ is even and~$c_i=0$ if~$i$ is odd; the two claimed inequalities follow straightforwardly. If instead~$g\geq1$, then the recurrence relation~\eqref{eq:recurrence_easy} straightforwardly implies that~$(c_i)$ is a weakly increasing sequence of positive integers, validating the first inequality. It even implies that we have $c_{i+1}\geq2c_i$ for~$i\geq1$ (using hyperbolicity to rule out the possibility that~$g=1$ and~$n=0$). This implies the second inequality, except in the single case $n_1=0$ and $i=1$. In this case, the desired inequality is
			\[
			4g^2+2(n-2) \geq 4g \,,
			\]
			which is easily seen to hold by hyperbolicity and the fact that~$n$ is congruent to~$n_1$ modulo~$2$, so even.
			
			We have thus shown that~$\log G(t)$ has non-negative coefficients. It follows that
			\[
			G(t^{2^j})^{1/2^{j+1}} = \exp\left(2^{-j-1}\log G(t^{2^i})\right)
			\]
			also has non-negative coefficients for all~$j$, as desired.
		\end{proof}
	\end{lemma}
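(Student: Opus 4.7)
My plan is to derive the infinite product from the functional equation of Lemma~\ref{lem:functional_equation} by iteration, and then verify non-negativity of coefficients by a direct computation with the logarithm of $G(t)$.

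For the product identity, the functional equation can be rewritten as $\HS_\bR(t) = G(t)^{1/2}\,\HS_\bR(t^2)^{1/2}$, where the square roots are taken among power series with constant term~$1$ (such roots exist uniquely in $\bQ\llbrack t\rrbrack$). Iterating this identity $N$ times yields
\[
\HS_\bR(t) = \HS_\bR(t^{2^{N+1}})^{1/2^{N+1}}\cdot\prod_{j=0}^{N} G(t^{2^j})^{1/2^{j+1}} \,.
\]
Since $\HS_\bR$ has constant coefficient~$1$, the prefactor $\HS_\bR(t^{2^{N+1}})^{1/2^{N+1}}$ is congruent to~$1$ modulo $t^{2^{N+1}}$, so as $N\to\infty$ it converges to~$1$ in the $t$-adic topology, giving the claimed product expansion. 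Alternatively one can observe that both sides satisfy the functional equation and have constant coefficient~$1$, and these two properties determine a power series uniquely.

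For the non-negativity of coefficients, the key reduction is that if $\log G(t)$ has non-negative coefficients, then so does $G(t^{2^j})^{1/2^{j+1}} = \exp\bigl(2^{-j-1}\log G(t^{2^j})\bigr)$, because the exponential of a series with non-negative coefficients has non-negative coefficients. Expanding using $1-2gt-(n-1)t^2 = (1-\alpha t)(1-\beta t)$ with $\alpha+\beta = 2g$ and $\alpha\beta = -(n-1)$, and similarly $1-2gt^2-(n-1)t^4 = (1-\alpha t^2)(1-\beta t^2)$, a short computation gives
\[
\log G(t) = \sum_{i\geq1}(\alpha^i+\beta^i)\frac{t^i}{i} - \sum_{i\geq1}\bigl(\alpha^i+\beta^i-(n_1-1)^i\bigr)\frac{t^{2i}}{i} \,.
\]
Thus it suffices to establish the two inequalities $\alpha^i+\beta^i\geq 0$ and $\alpha^{2i}+\beta^{2i}+2(n_1-1)^i\geq 2(\alpha^i+\beta^i)$ for all $i\geq1$.

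To handle these inequalities, set $c_i \coloneqq \frac12(\alpha^i+\beta^i)$, so $c_0=1$, $c_1=g$, and $c_{i+2} = 2g c_{i+1} + (n-1)c_i$. The plan is to split into cases by genus: when $g=0$ the sequence simplifies to $c_{2j}=(n-1)^j$, $c_{2j+1}=0$, and both inequalities reduce to elementary estimates using $n\geq3$ (which follows from hyperbolicity $2g+n>2$); when $g\geq 1$ the recurrence implies $(c_i)$ is a weakly increasing sequence of positive integers, in fact $c_{i+1}\geq 2c_i$ for $i\geq1$, which immediately handles both inequalities (after doubling the first).

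The main technical obstacle is the degenerate case $n_1=0$, $i=1$ of the second inequality, where the $(n_1-1)^i$ term is negative and the inductive bound $c_{i+1}\geq 2c_i$ is not yet available. This reduces to the concrete inequality $4g^2 + 2(n-2)\geq 4g$, which I would verify directly by hand using hyperbolicity and the parity constraint $n\equiv n_1\pmod2$ (so $n$ is even when $n_1=0$, whence $n\geq 2$, and combined with $2g+n>2$ one easily concludes). Apart from this edge case, the argument is essentially a routine verification once the recurrence machinery is set up.
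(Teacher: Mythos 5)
Your proposal is correct and follows essentially the same approach as the paper: for the product identity, both use the fact that the functional equation plus constant coefficient~$1$ determine~$\HS_\bR(t)$ uniquely (your iteration argument is just a constructive phrasing of this); for the non-negativity claim, both reduce to showing $\log G(t)$ has non-negative coefficients and verify the same two inequalities via the recurrence $c_{i+2}=2gc_{i+1}+(n-1)c_i$, with the identical case split by genus and the identical treatment of the $n_1=0$, $i=1$ edge case.
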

	
	As a consequence, we can give explicit approximations to $\HS_\glob(t)$ by truncating the product expansion of~$\HS_\bR(t)$ at some finite stage. The approximation we will use in the rest of paper is below.
	
	\begin{corollary}\label{cor:global_bound}
		Assuming the Tate--Shafarevich and Bloch--Kato Conjectures, we have the coefficientwise upper bound
		\[
		\HS_\glob(t) \leq (1-t)^{-r}\cdot(1-t^2)^{\rho+\#|D|-1-s}\cdot\frac{1-gt}{\sqrt{1-2gt-(n-1)t^2}}\cdot\sqrt{\frac{1-(n_1-1)t^2}{1-2gt^2-(n-1)t^4}} \,.
		\]
		\begin{proof}
			This is just the inequality
			\[
			\HS_\glob(t) \leq \HS_\glob(t)\cdot\prod_{j=1}^\infty G(t^{2^j})^{1/2^{j+1}}
			\]
			where~$G$ is as in Lemma~\ref{lem:product_expansion_of_HS_R} and we use the expression for $\HS_\glob(t)$ from Lemma~\ref{lem:global_series}.
		\end{proof}
	\end{corollary}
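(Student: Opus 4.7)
The plan is to combine Lemma~\ref{lem:global_series} with the first factor of the infinite product in Lemma~\ref{lem:product_expansion_of_HS_R}, and to justify discarding the remaining factors by a non-negativity-of-coefficients argument. Writing
\[
F(t) \coloneqq (1-t)^{-r}(1-t^2)^{\rho+\#|D|-1-s}\cdot\frac{1-gt}{1-2gt-(n-1)t^2}
\]
and $P(t) \coloneqq \prod_{j\geq 1} G(t^{2^j})^{1/2^{j+1}}$, Lemma~\ref{lem:global_series} reads $\HS_\glob(t) = F(t)\cdot \HS_\bR(t)^{-1}$ and Lemma~\ref{lem:product_expansion_of_HS_R} reads $\HS_\bR(t) = G(t)^{1/2}\cdot P(t)$, so combining them yields the exact identity
\[
\HS_\glob(t) \cdot P(t) = F(t)\cdot G(t)^{-1/2} \,.
\]

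Expanding $G(t)^{-1/2}$ using the definition of $G$ and cancelling the factor $1-2gt-(n-1)t^2$ against the corresponding denominator in $F(t)$ will, after elementary manipulation of radicals, recover exactly the right-hand side of the corollary. So the only thing that remains is to show $\HS_\glob(t) \leq \HS_\glob(t)\cdot P(t)$ componentwise.

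I plan to establish this by arguing that $\HS_\glob(t)$ has non-negative coefficients and that $P(t)-1$ has non-negative coefficients. For the first, this is immediate from the defining product $\HS_\glob(t) = (1-t^2)^{-s}\prod_{k\geq1}(1-t^k)^{-\dim\HH^1_f(G_\bQ,V_k)}$, each factor $(1-t^k)^{-d}$ having only non-negative coefficients for $d\geq 0$. For the second, Lemma~\ref{lem:product_expansion_of_HS_R} supplies precisely what is needed: each $G(t^{2^j})^{1/2^{j+1}}$ has non-negative coefficients with constant term $1$, hence so does $P(t)$, and so $P(t)-1$ has non-negative coefficients. The product $\HS_\glob(t)\cdot (P(t)-1)$ is therefore also non-negative coefficient by coefficient, which is the desired componentwise inequality. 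I do not foresee any serious obstacle: the real content is already contained in Lemmas~\ref{lem:global_series} and~\ref{lem:product_expansion_of_HS_R}, and only the bookkeeping of the radical expression and the positivity check remain.
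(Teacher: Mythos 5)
Your argument is correct and is essentially the paper's own: the paper simply asserts the inequality $\HS_\glob(t) \leq \HS_\glob(t)\cdot\prod_{j\geq1}G(t^{2^j})^{1/2^{j+1}}$ and invokes Lemmas~\ref{lem:global_series} and~\ref{lem:product_expansion_of_HS_R}, while you additionally spell out why that inequality holds (both $\HS_\glob(t)$ and each factor $G(t^{2^j})^{1/2^{j+1}}$ have non-negative coefficients, the latter with constant term~$1$). The algebraic simplification recovering the stated right-hand side from $F(t)\cdot G(t)^{-1/2}$ is exactly as you describe.
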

	
	\section{Proof of the main theorem}
	
	Now that we have determined the Hilbert series $\HS_\loc(t)$ and $\HS_\glob(t)$, we are in a position to prove Theorems~\ref{thm:main} and~\ref{thm:mainS}. In order to apply the bounds in \cite{alex:effective}, we need to determine a non-negative integer~$m$ for which the inequality
	\begin{equation}\label{eq:coeff_ineq_again}\tag{$\dagger$}
		\sum_{i=0}^mc_i^\glob < \sum_{i=0}^m c_i^\loc
	\end{equation}
	holds, with~$c_i^\loc$ and~$c_i^\glob$ the coefficients of~$\HS_\loc(t)$ and~$\HS_\glob(t)$. In other words, we would like the $m$th coefficient of~$\frac1{1-t}\HS_\glob(t)$ to be strictly less than the $m$th coefficient of~$\frac1{1-t}\HS_\loc(t)$.
	
	The observation underlying the proof is the following. Lemma~\ref{lem:local_series} tells us that~$\frac1{(1-t)^2}\HS_\loc(t)^2$ is a rational function in~$t$, and that its closest pole to~$0$ is a double pole at~$1/\alpha$ where~$\alpha$ is the largest root\footnote{In the case~$g=0$, this polynomial has two roots of equal magnitude. For simplicity, we ignore this case in the exposition here.} of the polynomial $T^2-2gT-(n-1)$. So the coefficients of~$\frac1{(1-t)^2}\HS_\loc(t)^2$ grow like $i\alpha^i$. On the other hand, Corollary~\ref{cor:global_bound} tells us that $\frac1{(1-t)^2}\HS_\glob(t)^2$ is bounded above coefficientwise by a rational function in~$t$ whose closest pole to~$0$ is a simple pole at~$1/\alpha$. So the coefficients of $\frac1{(1-t)^2}\HS_\glob(t)^2$ grow at most like~$\alpha^i$. Hence there is some $m\gg0$ such that the $m$th coefficient of $\frac1{(1-t)^2}\HS_\glob(t)^2$ is strictly less than the $m$th coefficient of $\frac1{(1-t)^2}\HS_\loc(t)^2$, and this immediately implies the same without the squares (for a possibly smaller value of~$m$).
	
	Making this quantitative involves some careful manipulation of inequalities, which we carry out in this section. We begin with some elementary inequalities (Lemmas \ref{lem:sum_coeffs} and \ref{lem:functional_bound}), which we use to give an upper bound on the coefficients of $\frac1{1-t}\HS_\glob(t)$ (Lemma \ref{lem:global_bound}). Then we use this bound to control the value of~$m$ for which the $m$th coefficient of~$\frac1{1-t}\HS_\loc(t)$ exceeds the $m$th coefficient of $\frac1{1-t}\HS_\glob$, and conclude using \cite[Theorem~B]{alex:effective}. We make no particular effort to make the inequalities in this section as tight as possible, though we believe the final bound on~$m$ is asymptotically correct.
	
	In the course of proving these inequalities, we use without comment the fact that the power series~$\frac{1-gt}{1-2gt-(n-1)t^2}$ has non-negative coefficients. This follows from Lemma~\ref{lem:local_series} or by direct computation. Hence~$\frac1{1-2gt-(n-1)t^2}=\frac1{1-gt}\cdot\frac{1-gt}{1-2gt-(n-1)t^2}$ also has non-negative coefficients. We use the symbol~$\leq$ to denote coefficientwise inequality of power series.

	\begin{lemma}\label{lem:sum_coeffs}
		Let~$\delta=1$ if~$g\geq1$ and~$\delta=2$ if~$g=0$. Then we have the coefficientwise inequality
		\[
		\frac1{1-t^\delta}\cdot\frac{1-gt}{1-2gt-(n-1)t^2} \leq 2\cdot\frac{1-gt}{1-2gt-(n-1)t^2} \,.
		\]
		\begin{proof}
			We want to show that the power series
			\[
			F(t) \coloneqq  \frac{(1-2t^\delta)(1-gt)}{(1-t^\delta)(1-2gt-(n-1)t^2)}
			\]
			has non-negative coefficients. When~$g=0$, we have
			\[
			F(t) = \frac1{1-t^2}\cdot\left(1+\frac{(n-3)t^2}{1-(n-1)t^2}\right) \,.
			\]
			Each of the two factors has non-negative coefficients since~$n\geq3$, and hence so too does~$F$. When~$g=1$, we have
			\[
			F(t) = \frac{1-2t}{1-2t-(n-1)t^2} = 1+\frac{(n-1)t^2}{1-2t-(n-1)t^2}
			\]
			which clearly has non-negative coefficients since~$n\geq1$. When~$g\geq2$, we have
			\[
			F(t) = \frac1{1-t^2}\cdot\left(1+\frac{(g-2)t}{1-gt}\right)\cdot\left(1+\frac{(g^2+n-1)t^2}{1-2gt-(n-1)t^2}\right)
			\]
			which clearly has non-negative coefficients, since it is a product of three power series with non-negative coefficients.
		\end{proof}
		
		A more sophisticated version of the same kind of analysis proves the following bound.
		
	\end{lemma}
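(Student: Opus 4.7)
The inequality to prove is equivalent to the power series
\[
F(t) \coloneqq \frac{(1-2t^\delta)(1-gt)}{(1-t^\delta)(1-2gt-(n-1)t^2)}
\]
having non-negative coefficients. My plan is to analyze $F$ via the recurrence satisfied by the coefficients of $A(t) \coloneqq \frac{1-gt}{1-2gt-(n-1)t^2}$. Writing $A(t) = \sum_{i \geq 0} a_i t^i$ and clearing denominators gives $a_0 = 1$, $a_1 = g$, and $a_i = 2g\, a_{i-1} + (n-1)\, a_{i-2}$ for $i \geq 2$, so in particular all $a_i$ are non-negative integers. The claimed bound then unwinds to the inequality $S_N \leq 2 a_N$ for all $N \geq 0$, where $S_N \coloneqq \sum_{j \geq 0,\, j\delta \leq N} a_{N-j\delta}$ is the $N$th coefficient of $A(t)/(1-t^\delta)$. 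I plan to establish this by induction on $N$.

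The inductive step rests on the estimate $a_N \geq 2\, a_{N-\delta}$ for $N \geq \delta$: given it, one computes $S_N = a_N + S_{N-\delta} \leq a_N + 2\, a_{N-\delta} \leq 2 a_N$. When $\delta = 1$ and $g \geq 2$, this follows directly from $a_N \geq 2g\, a_{N-1} \geq 2\, a_{N-1}$, using $a_1 = g \geq 2$ as the $N = 1$ base case. When $\delta = 1$ and $g = 1$, hyperbolicity forces $n \geq 1$, so $a_N = 2 a_{N-1} + (n-1) a_{N-2} \geq 2 a_{N-1}$ for $N \geq 2$. When $\delta = 2$ and $g = 0$, hyperbolicity forces $n \geq 3$, and the recurrence collapses to $a_N = (n-1) a_{N-2} \geq 2 a_{N-2}$.

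The main obstacle, and the point that requires special care, is the edge case $N = 1$ with $g = 1$: there one has $a_0 = a_1 = 1$, so the estimate $a_1 \geq 2 a_0$ fails outright. Nevertheless, $S_1 = a_0 + a_1 = 2 = 2 a_1$, so the desired inequality still holds (with equality), and one simply uses this as an additional base case. The remaining low-index base cases $N < \delta$ (where $S_N = a_N$) are trivial. Once this single exceptional value is isolated, the induction runs uniformly across the three regimes $g = 0$, $g = 1$, and $g \geq 2$, completing the proof.
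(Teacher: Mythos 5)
Your approach differs genuinely from the paper's: they expand
\[
F(t)=\frac{(1-2t^\delta)(1-gt)}{(1-t^\delta)(1-2gt-(n-1)t^2)}
\]
as a product of rational functions each of whose coefficients is manifestly non-negative (case by case in $g$), whereas you argue directly on the coefficient recurrence $a_i=2g\,a_{i-1}+(n-1)a_{i-2}$, reducing the claim to $a_N\geq 2a_{N-\delta}$ plus a single exceptional base case at $N=1$, $g=1$. Your framing is cleaner and more uniform than the paper's case analysis, and the observation that $S_N = a_N + S_{N-\delta}$ reduces everything to a two-term estimate is exactly the right move.

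However, there is a genuine gap in the case $g\geq 2$. You justify $a_N\geq 2a_{N-1}$ via the chain $a_N\geq 2g\,a_{N-1}\geq 2a_{N-1}$, but the first inequality is false when $n=0$: the recurrence gives $a_N = 2g\,a_{N-1} - a_{N-2}$, so $a_N < 2g\,a_{N-1}$ as soon as $a_{N-2}>0$, which is always (since $a_0=1$ and the sequence stays positive). And $n=0$ is not a corner case here --- it is the projective setting of Theorem~\ref{thm:main}, the main result of the paper. The desired inequality $a_N\geq 2a_{N-1}$ is still true for $g\geq 2$ and $n=0$, but it needs a different justification: one first shows by induction that $(a_i)$ is non-decreasing (using $a_N = 2g\,a_{N-1}-a_{N-2}\geq (2g-1)a_{N-1}\geq a_{N-1}$), and then $a_N - 2a_{N-1} = (2g-2)a_{N-1} + (n-1)a_{N-2} \geq (2g-2)a_{N-2} - a_{N-2} = (2g-3)a_{N-2}\geq 0$ since $g\geq 2$. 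With that patch your induction goes through in all three regimes and the proof is complete.
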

	
	\begin{lemma}\label{lem:functional_bound}
		We have the coefficientwise inequality
		\[
		\frac{(1-gt)^2\cdot(1-(n_1-1)t^2)}{(1-2gt-(n-1)t^2)\cdot(1-2gt^2-(n-1)t^4)} \leq 2\cdot\frac{1-gt}{1-2gt-(n-1)t^2} \,.
		\]
		\begin{proof}
			We wish to show that the power series
			\[
			F(t)\coloneqq \left(2-\frac{(1-gt)(1-(n_1-1)t^2)}{1-2gt^2-(n-1)t^4}\right)\cdot\frac{1-gt}{1-2gt-(n-1)t^2}
			\]
			has non-negative coefficients. For this, we write
			\begin{align*}
				F(t) &= \frac{(1-gt)\cdot(1+3gt+2t^2)}{1-2gt^2-(n-1)t^4} \\
				&+\frac{1-gt}{1-2gt-(n-1)t^2}\cdot\frac{(6g^2-4g+n+n_1-4)t^2+g(3n-n_1+2)t^3}{1-2gt^2-(n-1)t^4}
			\end{align*}
			By inspection, the coefficients $6g^2-4g+n+n_1-4$ and $g(3n-n_1+2)$ are both non-negative (using that~$n_1\leq n$ has the same parity as~$n$). Since~$\frac{1-gt}{1-2gt-(n-1)t^2}\geq1$, we thus have the coefficientwise inequality
			\[
			F(t) \geq \frac{1+2gt+(3g^2-4g+n+n_1-2)t^2+g(3n-n_1)t^3}{1-2gt^2-(n-1)t^4} \,.
			\]
			If~$g\neq1$, then the numerator of the right-hand side has non-negative coefficients and so~$F(t)\geq0$ as desired. If~$g=1$, then the numerator is~$\geq1-t^2$, and hence
			\[
			F(t) \geq \frac{1-t^2}{1-2t^2-(n-1)t^4} \geq 0
			\]
			as desired.
		\end{proof}
	\end{lemma}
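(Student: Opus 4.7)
The plan is to show that the power series
$$F(t) \coloneqq 2\cdot\frac{1-gt}{1-2gt-(n-1)t^2} - \frac{(1-gt)^2(1-(n_1-1)t^2)}{(1-2gt-(n-1)t^2)(1-2gt^2-(n-1)t^4)}$$
has non-negative coefficients. First I would extract the common factor $\frac{1-gt}{1-2gt-(n-1)t^2}$, which is coefficientwise non-negative by Lemma~\ref{lem:local_series}, and rewrite
$$F(t) = \frac{1-gt}{1-2gt-(n-1)t^2}\cdot\frac{N(t)}{1-2gt^2-(n-1)t^4},$$
where $N(t) = 2(1-2gt^2-(n-1)t^4) - (1-gt)(1-(n_1-1)t^2)$ is an explicit polynomial of degree $4$. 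A direct expansion shows that $N(t)$ itself has some negative coefficients once $g$ is at least $2$, so one cannot conclude from factor-wise non-negativity alone.

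The key observation is that $\frac{1-gt}{1-2gt-(n-1)t^2}$ is not merely coefficientwise non-negative but coefficientwise at least $1$. To exploit this slack, I would search for a low-degree polynomial $N_1(t)$ such that $N_2(t) \coloneqq N(t) - N_1(t)(1-2gt-(n-1)t^2)$ has non-negative coefficients. Then $F(t)$ splits as
$$F(t) = \frac{(1-gt)N_1(t)}{1-2gt^2-(n-1)t^4} + \frac{1-gt}{1-2gt-(n-1)t^2}\cdot\frac{N_2(t)}{1-2gt^2-(n-1)t^4},$$
and replacing the leading factor in the second summand by its coefficientwise lower bound $1$ yields $F(t) \geq \frac{(1-gt)N_1(t)+N_2(t)}{1-2gt^2-(n-1)t^4}$. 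Since the inverse of $1-2gt^2-(n-1)t^4$ has non-negative coefficients, one is reduced to showing that the combined numerator $(1-gt)N_1(t)+N_2(t)$ has non-negative coefficients.

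Matching the $t^0$, $t^1$, and $t^4$ coefficients of $N(t)$ and $N_1(t)(1-2gt-(n-1)t^2)$ suggests the choice $N_1(t) = 1 + 3gt + 2t^2$. Expanding explicitly, the combined numerator becomes $1 + 2gt + (3g^2-4g+n+n_1-2)t^2 + g(3n-n_1)t^3$, whose $t^3$-coefficient is manifestly non-negative (as $n_1 \leq n$), and whose $t^2$-coefficient is non-negative for $g=0$ (using $n \geq 3$ by hyperbolicity) and for $g \geq 2$. In the residual case $g=1$ the $t^2$-coefficient equals $n+n_1-3$, which can be $-1$; there one only gets the weaker bound that the numerator is $\geq 1-t^2$, and one finishes by verifying that $\frac{1-t^2}{1-2t^2-(n-1)t^4}$ has non-negative coefficients via the recurrence for its coefficient sequence.

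I expect the main obstacle to be guessing the correct decomposition $N = N_1(1-2gt-(n-1)t^2) + N_2$ in the first place: neither the degree nor the specific coefficients of $N_1$ are forced by any obvious structural constraint, and one must simultaneously arrange that $N_2$ has non-negative coefficients and that the combined numerator $(1-gt)N_1 + N_2$ has non-negative coefficients across all hyperbolic values of $(g,n,n_1)$, including the tight edge case $g=1$.
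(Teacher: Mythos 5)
Your proposal is correct and matches the paper's proof essentially step for step: you arrive at the same decomposition $N = N_1\cdot(1-2gt-(n-1)t^2) + N_2$ with $N_1 = 1 + 3gt + 2t^2$, use the same lower bound $\frac{1-gt}{1-2gt-(n-1)t^2}\geq 1$ on the $N_2$ term, obtain the same combined numerator $1 + 2gt + (3g^2-4g+n+n_1-2)t^2 + g(3n-n_1)t^3$, and handle the edge case $g=1$ by the same $1-t^2$ bound. The only difference is presentational: you frame the choice of $N_1$ as guided by coefficient-matching, whereas the paper simply exhibits the split.
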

	
	Using Lemmas~\ref{lem:sum_coeffs} and~\ref{lem:functional_bound}, we obtain the following bound on the power series~$\HS_\glob(t)$.
	
	\begin{lemma}\label{lem:global_bound}
		We have the coefficientwise inequality
		\[
		\frac1{(1-t^\delta)^2}\HS_\glob(t)^2 \leq 2^{2r+2\bar s+3}\frac{1-gt}{1-2gt-(n-1)t^2} \,,
		\]
		where $\bar s\coloneqq \max\{s+1-\rho,0\}$ and~$\delta\in\{1,2\}$ is as in Lemma~\ref{lem:sum_coeffs}.
		\begin{proof}
			Squaring the bound from Corollary~\ref{cor:global_bound} gives the componentwise inequality
			\[
			\frac1{(1-t^\delta)^2}\HS_\glob(t)^2 \leq \frac1{(1-t^\delta)^{2+2r+2\bar s}}\cdot \frac{(1-gt)^2\cdot(1-(n_1-1)t^2)}{(1-2gt-(n-1)t^2)\cdot(1-2gt^2-(n-1)t^4)} \,.
			\]
			So the claimed bound follows from Lemmas~\ref{lem:sum_coeffs} and~\ref{lem:functional_bound}.
		\end{proof}
	\end{lemma}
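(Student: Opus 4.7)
The strategy is to leverage Corollary~\ref{cor:global_bound} together with the two technical Lemmas~\ref{lem:sum_coeffs} and~\ref{lem:functional_bound}. First, I would square the coefficientwise bound in Corollary~\ref{cor:global_bound}; this is legitimate because both sides are power series with non-negative coefficients (the right-hand side being expressible as the product $\HS_\glob(t)\cdot\prod_{j\geq1}G(t^{2^j})^{1/2^{j+1}}$ of non-negative series). Squaring eliminates the square roots and produces the rational upper bound
\[
\HS_\glob(t)^2 \leq (1-t)^{-2r}(1-t^2)^{2(\rho+\#|D|-1-s)} \cdot \frac{(1-gt)^2(1-(n_1-1)t^2)}{(1-2gt-(n-1)t^2)(1-2gt^2-(n-1)t^4)} \,.
\]

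Next, I would consolidate the factors $(1-t^\delta)^{-2}$ (coming from the left-hand side), $(1-t)^{-2r}$, and $(1-t^2)^{2(\rho+\#|D|-1-s)}$ into a single factor $(1-t^\delta)^{-(2+2r+2\bar s)}$, coefficientwise. The key point is that $\bar s=\max\{s+1-\rho,0\}$ is chosen large enough to dominate the effective pole contribution of $(1-t)^{-2r}(1-t^2)^{2(\rho+\#|D|-1-s)}$ at $t=1$, with enough slack to absorb the $(1+t)^{\pm}$ factors arising when one expands $(1-t^2)=(1-t)(1+t)$. The desired intermediate inequality is
\[
\frac{1}{(1-t^\delta)^2}\HS_\glob(t)^2 \leq \frac{1}{(1-t^\delta)^{2+2r+2\bar s}}\cdot\frac{(1-gt)^2(1-(n_1-1)t^2)}{(1-2gt-(n-1)t^2)(1-2gt^2-(n-1)t^4)} \,.
\]

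Once this is in hand, the remainder is a routine application of the two earlier lemmas: apply Lemma~\ref{lem:functional_bound} to replace the rational factor by $2\cdot\frac{1-gt}{1-2gt-(n-1)t^2}$, and then iterate Lemma~\ref{lem:sum_coeffs} a total of $2+2r+2\bar s$ times, each iteration contributing a factor of $2$, in order to absorb the remaining power $(1-t^\delta)^{-(2+2r+2\bar s)}$. Multiplying the constants together gives $2\cdot 2^{2+2r+2\bar s}=2^{2r+2\bar s+3}$, matching the stated bound. The main obstacle will be the consolidation step: verifying the coefficientwise inequality $(1-t)^{-2r}(1-t^2)^{2(\rho+\#|D|-1-s)}\leq(1-t^\delta)^{-(2r+2\bar s)}$ requires a case analysis on $\delta\in\{1,2\}$ and on the sign of $\rho+\#|D|-1-s$, along with an elementary binomial estimate such as $\binom{2a}{k}\leq\binom{k+2a-1}{2a-1}$, which yields $(1+t)^{2a}\leq(1-t)^{-2a}$ and thus handles the $(1+t)^{\pm}$-factors appearing after expanding $(1-t^2)$.
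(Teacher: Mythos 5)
Your outline matches the paper's proof: square Corollary~\ref{cor:global_bound}, consolidate the rational--power factors into $(1-t^\delta)^{-(2+2r+2\bar s)}$, and then peel off $2+2r+2\bar s$ factors of $(1-t^\delta)^{-1}$ with Lemma~\ref{lem:sum_coeffs} after one application of Lemma~\ref{lem:functional_bound}, giving the constant $2\cdot 2^{2+2r+2\bar s}=2^{2r+2\bar s+3}$. However, the specific sub-inequality you reduce the consolidation step to, namely
\[
(1-t)^{-2r}(1-t^2)^{2(\rho+\#|D|-1-s)}\leq(1-t^\delta)^{-(2r+2\bar s)} \,,
\]
is strictly stronger than the consolidated inequality the paper states, and it is in fact false in general. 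Write $\alpha=\rho+\#|D|-1-s$ and take the genus-zero case $g=0$ (so $r=\rho=0$, $\delta=2$) with $\#|D|=5$, $s=1$: then $\alpha=3$, $\bar s=\max\{s+1-\rho,0\}=2$, and the claim becomes $(1-t^2)^{6}\leq(1-t^2)^{-4}$, which fails already at $t^4$ since $\binom{6}{2}=15>\binom{5}{2}=10$. The point is that the extra smoothing factor $(1-t^\delta)^{-2}$ on the left-hand side, which you drop when "factoring" the comparison, is doing real work: in this same example the inequality that is actually needed, $(1-t^2)^{-2}(1-t^2)^{6}\leq(1-t^2)^{-6}$, i.e.\ $(1-t^2)^{4}\leq(1-t^2)^{-6}$, does hold (its $t^4$ coefficients are $6\leq 21$). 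So your binomial estimate $(1+t)^{2a}\leq(1-t)^{-2a}$ is not sufficient for the reduction as you have framed it, and the consolidation step must be carried out without discarding the $(1-t^\delta)^{-2}$ (and, more carefully still, possibly using that the rational factor $P(t)$ does not separate off). The paper itself glosses over this verification, simply asserting the squared, consolidated inequality, but the version you propose to prove in its place would not go through.
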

	
	Using this bound, we can complete the proof of Theorem~\ref{thm:main}. Recall that Theorem~\ref{thm:main} is a consequence of \cite[Theorem~1.1.1]{alex:effective} once we have verified the following.
	
	\begin{lemma}\label{lem:m_bound}
		There is some $m\leq M\coloneqq 4^{r+\bar s+2}$ such that the coefficient of $t^m$ in $\frac1{1-t}\HS_\glob(t)$ is strictly less than the corresponding coefficient of $\frac1{1-t}\HS_\loc(t)$.
		\begin{proof}
			We prove that there is some $m\leq M$ such that the coefficient of $t^m$ in $\frac1{1-t^\delta}\HS_\glob(t)$ is strictly less than the corresponding coefficient of~$\HS_\loc(t)$, where~$\delta=1$ or~$\delta=2$ according as $g\geq1$ or~$g=0$. Since $\HS_\glob(t)$ and $\HS_\loc(t)$ are both power series in~$t^2$ when $g=0$, the stated result is easily deduced from this.
			
			We argue by contradiction. Let us write $\leq_M$ for the partial ordering on power series where $\sum_ia_it^i\leq_M\sum_ib_it^i$ just when $a_i\leq b_i$ for $i\leq M$. We thus assume for contradiction that $\HS_\loc(t)\leq_M\frac1{1-t^\delta}\HS_\glob(t)$. Since $\HS_\glob(t)$ and $\HS_\loc(t)$ are power series with non-negative coefficients, we may square both sides and use Lemma~\ref{lem:global_bound} to deduce that
			\begin{equation}\label{eq:to_contradict}\tag{$\ast$}
				\left(\frac{1-gt}{1-2gt-(n-1)t^2}\right)^2 \leq_M 2^{2r+2\bar s+3}\frac{1-gt}{1-2gt-(n-1)t^2} \,.
			\end{equation}
			Now let~$\alpha,\beta$ denote the roots of the polynomial $T^2-2gT-(n-1)=0$, so that we have the partial fractions expansion
			\[
			\frac{1-gt}{1-2gt-(n-1)t^2} = \frac12\left(\frac1{1-\alpha t}+\frac1{1-\beta t}\right) \,.
			\]
			Hence the coefficients\footnote{These were also denoted~$c_i^\loc$ in the introduction.}~$c_i$ of $\frac{1-gt}{1-2gt-(n-1)t^2}$ are given by $c_i=\frac12(\alpha^i+\beta^i)$. Squaring the above equality shows that
			\begin{align*}
				\left(\frac{1-gt}{1-2gt-(n-1)t^2}\right)^2 &= \frac14\left(\frac1{(1-\alpha t)^2}+\frac2{1-2gt-(n-1)t^2}+\frac1{(1-\beta t)^2}\right) \\
				&\geq \frac14\left(\frac1{(1-\alpha t)^2}+\frac{2-2gt}{1-2gt-(n-1)t^2}+\frac1{(1-\beta t)^2}\right) \,,
			\end{align*}
			so the coefficients $c^{(2)}_i$ of $\left(\frac{1-gt}{1-2gt-(n-1)t^2}\right)^2$ satisfy
			\[
			c^{(2)}_i\geq\frac14\left((i+1)\alpha^i+2c_i+(i+1)\beta^i\right)=\frac{i+2}2c_i \,.
			\]
			In particular, the~$M$th coefficient on the left-hand side of~\eqref{eq:to_contradict} is at least $\frac{M+2}2c_M$ while the corresponding coefficient on the right-hand side is $2^{2r+2\bar s+3}c_M$. Since~$M$ is even, we have that $c_M>0$, and $\frac{M+2}2>2^{2r+2\bar s+3}$ by choice of~$M$. This contradicts the inequality~\eqref{eq:to_contradict}, and completes the proof of the lemma.
		\end{proof}
	\end{lemma}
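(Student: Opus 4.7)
The plan is to exploit the growth-rate mismatch between the local and global Hilbert series flagged in the preamble to this section. Lemma~\ref{lem:local_series} writes $\HS_\loc(t) = \frac{1-gt}{1-2gt-(n-1)t^2}$, so $\HS_\loc(t)^2$ has a double pole at $1/\alpha$ (where $\alpha$ is the larger root of $T^2-2gT-(n-1)$) and its coefficients grow like $i\alpha^i$; meanwhile Lemma~\ref{lem:global_bound} bounds the coefficients of $\frac{1}{(1-t^\delta)^2}\HS_\glob(t)^2$ by a constant multiple of the coefficients of $\HS_\loc(t)$, which grow like only $\alpha^i$. First I would reduce the lemma to its analogue with $\frac{1}{1-t^\delta}$ in place of $\frac{1}{1-t}$, where $\delta = 2$ if $g = 0$ and $\delta = 1$ otherwise; when $g = 0$ both Hilbert series lie in $\bZ\llbrack t^2\rrbrack$, so any witness $m$ is automatically at even degree and the original statement follows.

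Next, I would argue by contradiction, assuming the $m$th coefficient of $\HS_\loc(t)$ is at most the $m$th coefficient of $\frac{1}{1-t^\delta}\HS_\glob(t)$ for every $m \leq M$. Squaring preserves coefficientwise inequality in degrees $\leq M$, since the power series involved have non-negative coefficients, and feeding in Lemmas~\ref{lem:local_series} and~\ref{lem:global_bound} yields
\[
\HS_\loc(t)^2 \,\leq_M\, 2^{2r+2\bar s+3}\cdot\HS_\loc(t),
\]
where $\leq_M$ denotes coefficientwise inequality up to degree $M$. The main technical step is to produce a lower bound of the shape $c_i^{(2)} \geq \tfrac{i+2}{2}c_i$, where $c_i$ and $c_i^{(2)}$ are the coefficients of $\HS_\loc(t)$ and $\HS_\loc(t)^2$ respectively: a partial-fractions decomposition gives $c_i = \tfrac12(\alpha^i + \beta^i)$, and squaring produces double-pole contributions amounting to $\tfrac{i+1}{2}c_i$ together with a cross term that I would bound below by $\tfrac12 c_i$ using non-negativity of the coefficients of $\frac{1}{1-2gt-(n-1)t^2}$.

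To finish, I would set $i = M = 4^{r+\bar s+2} = 2^{2r+2\bar s+4}$ and observe that $M$ is even, so $c_M > 0$, while $\tfrac{M+2}{2} = 2^{2r+2\bar s+3} + 1$ strictly exceeds $2^{2r+2\bar s+3}$, contradicting the displayed bound. The hard part will be handling the $g = 0$ case: there $c_i$ vanishes for odd $i$, which is exactly what forces the reduction via $\delta$ above and is the reason the bound $M$ is formulated in terms of powers of $4$ rather than $2$. Everything else is routine bookkeeping with rational generating functions.
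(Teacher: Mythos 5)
Your proposal reproduces the paper's argument essentially step for step: the reduction via $\delta\in\{1,2\}$ (with both series lying in $\bZ\llbrack t^2\rrbrack$ when $g=0$), the contradiction hypothesis $\HS_\loc\leq_M\frac1{1-t^\delta}\HS_\glob$, squaring and invoking Lemma~\ref{lem:global_bound} to reach $\HS_\loc^2\leq_M 2^{2r+2\bar s+3}\HS_\loc$, the partial-fractions lower bound $c_i^{(2)}\geq\frac{i+2}{2}c_i$ split into double-pole and cross-term contributions, and finally $\frac{M+2}{2}=2^{2r+2\bar s+3}+1>2^{2r+2\bar s+3}$ with $c_M>0$ because $M$ is even. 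This is the same proof; the only cosmetic difference is that the paper states the reduced claim as comparing $\frac1{1-t^\delta}\HS_\glob$ against $\HS_\loc$ itself rather than against $\frac1{1-t^\delta}\HS_\loc$, but your contradiction hypothesis already matches the paper's, so there is no substantive divergence.
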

	
	Now to prove Theorem~\ref{thm:mainS}, and hence Theorem~\ref{thm:main} as a special case. Plugging the bound from Lemma~\ref{lem:m_bound} into \cite[Theorem~6.2.1]{alex:effective}, we obtain the upper bound
	\[
	\#\Y(\bZ_p)_{S,\infty} \leq \kappa_p\cdot\#\Y(\bF_p)\cdot\prod_{\ell\notin S}n_\ell\cdot\prod_{\ell\in S}(n_\ell+n)\cdot(4g+2n-2)^M\cdot\prod_{i=0}^{M-1}(c_i+1)
	\]
	where~$M=4^{r+\bar s+2}$ and $(c_i)$ are the coefficients of the power series~$\HS_\loc(t)=\frac{1-gt}{1-2gt-(n-1)t^2}$, as above. These coefficients satisfy $c_0=1$, $c_1=g$ and
	\[
	c_i = 2gc_{i-1} + (n-1)c_{i-2}
	\]
	for~$i\geq2$. The sequence~$(c_i)$ consists of non-negative integers, so an easy induction shows that $c_i\leq(2g+n)^i$ for~$i\geq0$. Hence
	\[
	\prod_{i=0}^{M-1}(c_i+1) \leq (2g+n)^{(M^2-M)/2}
	\]
	and we have proved Theorem~\ref{thm:mainS}.\qed
	
	\begin{remark}
		Although one needs only an upper bound on~$m$ to deduce Theorem~\ref{thm:main}, it is natural to wonder also about lower bounds, to see how far the argument could be optimised. We suspect, based on the computations in \cite[Remark~7.0.4]{alex:effective}, that the minimal~$m$ should grow like $4^{r+s}$ as a function of~$r$ and~$s$, for fixed~$g$ and~$n$. In other words the upper bound on~$m$ from Lemma~\ref{lem:m_bound} should be of the correct order of magnitude.
		
		Although we will not prove this in general, we sketch an argument below which gives an exponential lower bound on~$m$ in the special case of a once-punctured elliptic curve.
	\end{remark}
	
	\begin{lemma}
		Suppose that~$g=n=1$. Then we have
		\[
		\sum_{i=0}^mc_i^\glob \geq \sum_{i=0}^m c_i^\loc
		\]
		whenever
		\[
		20 \leq m \leq \frac1{e^{1/21}\pi}(5/4)^{2r+2s-2} \,.
		\]
		If~$r\geq1$ and~$r+s\gg0$, then the same inequality also holds for~$0\leq m\leq 20$.
		\begin{proof}[Proof (sketch)]
			The case~$r=s=0$ is vacuous, so we may assume that~$r+s\geq1$. The product expansion of Lemma~\ref{lem:product_expansion_of_HS_R} implies that we have
			\begin{align*}
				\frac1{1-t}\HS_\glob(t) &= (1-t)^{-r}\cdot(1-t^2)^{1-s}\cdot\prod_{j=0}^\infty(1-2t^{2^j})^{-1/2^{j+1}} \\
				&\geq \frac{(1+t^2)^{r+s-1}}{\sqrt{1-2t}} = \left(\sum_{i\geq0}{{r+s-1}\choose{i}}t^{2i}\right)\cdot\left(\sum_{j\geq0}\frac1{2^j}{{2j}\choose{j}}t^j\right)
			\end{align*}
			The bounds on Stirling's approximation from \cite{robbins:stirling} imply that one has the inequality
			\[
			{{2j}\choose{j}} \geq \frac{4^j}{e^{1/6j(6j+1)}\sqrt{\pi j}} \geq \frac{4^j}{e^{1/42}\sqrt{\pi j}}
			\]
			for all~$j\geq1$, and hence for~$m\geq1$ the $m$th coefficient of $\frac1{1-t}\HS_\glob(t)$ is
			\[
			\sum_{i=0}^mc_i^\glob \geq \sum_{i=0}^{\lfloor m/2\rfloor}\frac1{2^{m-2i}}{{r+s-1}\choose{i}}{{2m-4i}\choose{m-2i}} \geq \frac{(1+1/4)^{\bar m}\cdot 2^m}{e^{1/42}\sqrt{\pi m}}
			\]
			where~$\bar m = \min(\lfloor m/2\rfloor,r+s-1)$. It is easy but laborious to check that this estimate implies that
			\[
			\sum_{i=0}^mc_i^\glob \geq \sum_{i=0}^m c_i^\loc = 2^m
			\]
			for the claimed values of~$m$.
		\end{proof}
	\end{lemma}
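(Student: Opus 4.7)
The plan is to specialise the Hilbert series computations to $g=n=1$ and compare coefficients directly. For a once-punctured elliptic curve with rational puncture and $\rho = 1$ (the non-CM case, which gives the weaker bound), one has $n_1 = \#|D| = \rho = 1$, and so the local series $\HS_\loc(t) = \frac{1-t}{1-2t}$ has partial sums $\sum_{i=0}^m c_i^\loc = 2^m$, while the function $G(t)$ from Lemma~\ref{lem:product_expansion_of_HS_R} collapses to $\frac{1-2t^2}{1-2t}$. After telescoping the infinite product in that lemma, the global series satisfies
\[
\frac1{1-t}\HS_\glob(t) = (1-t)^{-r}(1-t^2)^{1-s}\prod_{j\geq 0}(1-2t^{2^j})^{-1/2^{j+1}}.
\]

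The strategy is to lower-bound this factor by factor. Since every $(1-2t^{2^j})^{-1/2^{j+1}}$ with $j\geq 1$ has non-negative coefficients and constant term~$1$, one may discard them. Next, I would verify the elementary coefficientwise inequality $(1-t)^{-r}(1-t^2)^{1-s}\geq(1+t^2)^{r+s-1}$ (valid whenever $r+s\geq 1$) by a short induction on binomial coefficients. Combined with the standard expansion $(1-2t)^{-1/2} = \sum_{j\geq 0}2^{-j}\binom{2j}{j}t^j$, this yields
\[
\sum_{i=0}^m c_i^\glob \geq \sum_{i=0}^{\lfloor m/2\rfloor}\binom{r+s-1}{i}\cdot\frac{1}{2^{m-2i}}\binom{2(m-2i)}{m-2i}.
\]

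The main analytic step is then Robbins's sharpening of Stirling, giving $\binom{2j}{j}\geq 4^j/(e^{1/42}\sqrt{\pi j})$ for $j\geq 1$ (the constant $1/42$ comes from evaluating the general bound $1/(6j(6j+1))$ at $j=1$). Applying this termwise, discarding the $j=0$ term if $m$ is even, and using $\sqrt{\pi(m-2i)}\leq\sqrt{\pi m}$ gives
\[
\sum_{i=0}^m c_i^\glob \geq \frac{2^m}{e^{1/42}\sqrt{\pi m}}\sum_{i=0}^{\bar m}\binom{r+s-1}{i}(1/4)^i \geq \frac{(5/4)^{\bar m}\cdot 2^m}{e^{1/42}\sqrt{\pi m}},
\]
where $\bar m = \min(\lfloor m/2\rfloor, r+s-1)$ and the last inequality uses $\binom{r+s-1}{i}\geq\binom{\bar m}{i}$. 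The desired comparison $\sum c_i^\glob\geq 2^m$ then becomes $(5/4)^{\bar m}\geq e^{1/42}\sqrt{\pi m}$, which in the regime $\bar m = r+s-1$ rearranges precisely to $m\leq(5/4)^{2r+2s-2}/(e^{1/21}\pi)$; the threshold $m\geq 20$ is a genuine numerical barrier (at $m=19$ the comparison just fails, at $m=20$ it just passes).

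The main obstacle, and the reason this is only a sketch, is the small-$m$ regime $0\leq m\leq 20$ under the hypothesis $r\geq 1$ and $r+s\gg 0$: the crude Stirling bound is too lossy here, and one must argue case-by-case, either by retaining additional terms from the product expansion of $\HS_\bR(t)$ or by direct expansion of $\frac1{1-t}\HS_\glob(t)$ modulo $t^{m+1}$. The necessity of the hypothesis $r\geq 1$ is visible already at $m=1$: direct expansion shows the $t$-coefficient of $\frac1{1-t}\HS_\glob(t)$ equals $r+1$, which fails to exceed $\sum_{i\leq 1}c_i^\loc = 2$ when $r=0$.
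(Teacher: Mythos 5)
Your proposal follows the paper's own sketch essentially step for step: specialise to $g=n=1$ and use Lemma~\ref{lem:global_series} together with the product expansion of Lemma~\ref{lem:product_expansion_of_HS_R} to obtain $\frac{1}{1-t}\HS_\glob(t)=(1-t)^{-r}(1-t^2)^{1-s}\prod_{j\geq0}(1-2t^{2^j})^{-1/2^{j+1}}$, drop the $j\geq1$ factors, bound $(1-t)^{-r}(1-t^2)^{1-s}\geq(1+t^2)^{r+s-1}$, expand $(1-2t)^{-1/2}$ in central binomials, invoke Robbins's refinement of Stirling, and compare with $\sum_i c_i^\loc=2^m$. The only stray remark is the parenthetical attributing $\rho=1$ to the ``non-CM case'': for $g=1$ the Jacobian is $E$ itself and $\NS(E)\cong\bZ$ regardless of CM, so $\rho=1$ always; this is harmless. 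Your added observations — that the crude estimate first succeeds exactly at $m=20$, that the small-$m$ regime requires a separate case analysis, and that $r\geq1$ is necessary already at $m=1$ since the $t$-coefficient of $\frac{1}{1-t}\HS_\glob(t)$ equals $r+1$ — are correct and give slightly more detail than the paper's ``easy but laborious to check'', but the underlying argument is the same.
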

	
	\appendix
	
	\section{$S$-integral points on punctured CM elliptic curves}
	
	In \cite{kimCM} Kim proved finiteness of the set of $S$-integral points of once-punctured elliptic curves $\mathcal{Y}$ with complex multiplication (CM) by computing dimensions of Selmer groups of the graded pieces of a particular quotient $U'$ of the $\bQ_p$-pro-unipotent fundamental group $U$.
	The effective Chabauty--Kim Theorem \cite[Theorem 6.2.1]{alex:effective}, which is the main input for proving the main result Theorem \ref{thm:mainS} above, is formulated for any quotient of $U$ and gives an upper bound for the size of $\mathcal{Y}(\mathbb{Z}_p)_{S,U}$.
	The aim of this section is to apply our method with $U'$ in place of $U$ and compare the resulting bounds, for punctured CM elliptic curves.
	We will see that, for $\# S$ large, the bounds for $U'$ are better than the bounds for $U$, even though \emph{a priori} we have the containment $\mathcal{Y}(\mathbb{Z}_p)_{S,U} \subseteq \mathcal{Y}(\mathbb{Z}_p)_{S,U'}$.
	Along the way, we will assume only a conjecture about non-vanishing of $p$-adic $L$-functions, but could also assume Conjecture \ref{conj:BK} instead.
	We also prove an unconditional result, see Remark \ref{rem:boundCMECuncond}.
	
	So let $E$ be an elliptic curve over $\bQ$ with CM by an imaginary quadratic number field $K$, and let $p$ be a prime of good reduction that splits in $K$, that is $p=\pi\bar{\pi}$ with $\pi, \bar{\pi}\in K$.
	Let $\mathcal{E}$ denote the minimal Weierstra\ss ~model of $E$ over $\bZ$.
	Let $Y$ be the punctured elliptic curve $E\setminus\{0\}$ and $\Y$ be the model of $Y$ that is the complement in $\mathcal{E}$ of the closure of $0$.
	To apply the effectivity results, we need a regular model of $E\setminus{0}$.
	So let $\mathcal{E}_{\min}$ be the minimal regular model of $E$ over $\bZ$ and $\mathcal{Y}_{\min}$ the complement in $\mathcal{E}_{\min}$ of the closure of $0$.
	Let $S$ denote a finite set of primes of $\bQ$ with $p\not\in S$.
	Note that $\mathcal{Y}_{\min}(\bZ_S) = \mathcal{Y}(\bZ_S)$ by the construction of the minimal regular model from a Weierstra\ss ~model, compare Tate's algorithm \cite[\S IV.9]{SilvermanArithEll2}.
	Thus bounding $\Y_{\min}(\bZ_p)_{S,U'}$ automatically bounds the number of $S$-integral points on the affine Weierstra\ss ~model $\Y$.
	
	Let $U$ denote the $\bQ_p$-pro-unipotent fundamental group of $Y$.
	Its Lie algebra $L$ is the pro-nilpotent completion of the free Lie algebra in two generators $e,f$, where we choose $e$ (resp.\ $f$) as a lift of a basis of $V_\pi(E)$ (resp.\ $V_{\bar{\pi}}(E)$) in $L^{\ab} = V_p(E) = V_\pi(E)\oplus V_{\bar{\pi}}(E)$.
	Let $L'$ be the quotient of $L$ by the pro-nilpotent completion of the span of all Lie monomials in $e,f$ in which both $e$ and $f$ appear at least twice.
	Finally, let $U'$ be the quotient of $U$ whose Lie algebra is $L'$ ($U'$ is denoted $W$ in \cite{kimCM}).
	
	The quotient $U'$ of $U$ inherits a weight filtration which we also denote by $\W$.
	The corresponding graded pieces are denoted by $V'_k \coloneqq  \gr_{-k}^{\W} U'$ and the corresponding local and global Hilbert series by $\HS'_\loc(t)$ and $\HS'_\glob(t)$, respectively.
	Theorem \cite[Theorem 6.2.1]{alex:effective} provides an explicit bound on the number of $S$-integral points of $\Y$ once we find an explicit $m$ such that the coefficient of $t^m$ in $\frac{1}{1-t}\HS'_\glob(t)$ is strictly smaller than the corresponding coefficient of $\frac{1}{1-t}\HS'_\loc(t)$.
	
	The weight filtration on $U'$ agrees with the filtration given by the descending central series.
	The following lemma summarises \cite[\S 1]{kimCM}.
	
	\begin{lemma}\label{lem:quotgalmodule}
		Let $V_p(E)$ be the rational $p$-adic Tate module of $E$, and let $\chi\colon G_K \to \bQ_p^\times$ (resp.\ $\bar{\chi}$) encode the Galois action on the $\pi$-adic (resp.\ $\bar{\pi}$-adic) Tate module $V_\pi(E) = (\varprojlim_n E(\overline{\bQ})[\pi^n])\otimes_{\bZ_p}\bQ_p$ (resp.\ $V_{\bar{\pi}}(E)$).
		Then
		\[
		V'_k \cong \begin{cases}
			V_p(E) \cong \bQ_p(\chi)\oplus\bQ(\bar{\chi}), & k=1,\\
			\bQ_p(1), & k=2,\\
			\bQ_p(\chi^{n-2}(1))\oplus\bQ_p(\bar{\chi}^{n-2}(1)), & k\geq 3,
		\end{cases}
		\]
		where the isomorphisms for $k\geq 3$ and the second isomorphism for $k=1$ are isomorphisms of $G_K$-modules.
	\end{lemma}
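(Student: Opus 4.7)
The plan is to exploit the fact that the weight filtration on $U'$ agrees with its descending central series, as stated just before the lemma, and hence the graded pieces $V'_k$ can be computed directly from the presentation of the Lie algebra $L'$. First I would observe that $L'$, by construction, is the quotient of the free pro-nilpotent Lie algebra on $e,f$ by the Lie ideal generated by monomials in which both $e$ and $f$ appear at least twice; consequently the graded piece in degree~$k$ of $L'$ is spanned by Lie monomials of total degree~$k$ in which either $e$ or $f$ appears at most once.

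For $k=1$, the graded piece is the abelianization $L^{\ab}=V_p(E)$, and the splitting $V_p(E)=V_\pi(E)\oplus V_{\bar\pi}(E)$ together with the defining property of $\chi,\bar\chi$ gives the stated $G_K$-decomposition. For $k=2$, the quotient by the ideal is trivial (the ideal lives only in degrees~$\geq4$), so the graded piece is the $1$-dimensional space $\bigwedge^{2}V_p(E)=V_\pi(E)\otimes V_{\bar\pi}(E)\cong\bQ_p(\chi\bar\chi)=\bQ_p(1)$, using that $\chi\bar\chi$ is the cyclotomic character because $E$ has CM by $K$ with $p=\pi\bar\pi$.

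For $k\geq3$ I would argue as follows. Any Lie monomial in $e,f$ of length $k$ in which $e$ appears $0$ times or $f$ appears $0$ times vanishes by antisymmetry, so the only surviving monomials in $\gr_{-k}L'$ have exactly one $e$ and $k-1$ copies of $f$, or vice versa. Using the Jacobi identity repeatedly, every monomial of the first type reduces modulo lower-weight relations to a scalar multiple of $\operatorname{ad}(f)^{k-1}(e)$, and similarly for the second type, so $\gr_{-k}L'$ is at most two-dimensional and spanned by $\operatorname{ad}(f)^{k-1}(e)$ and $\operatorname{ad}(e)^{k-1}(f)$. Independence of these two elements will follow because their $G_K$-characters $\bar\chi^{k-1}\chi$ and $\chi^{k-1}\bar\chi$ are distinct (since $\chi/\bar\chi$ has infinite order on $G_K$), so they cannot coincide even up to scalar. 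Finally, applying $\chi\bar\chi=\chi_{\mathrm{cyc}}$ rewrites these characters as $\bar\chi^{k-2}(1)$ and $\chi^{k-2}(1)$, yielding the claimed decomposition (the $n$ in the statement is a typo for $k$).

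The main obstacle is to verify cleanly that the two candidate generators in degree $k\geq3$ are indeed linearly independent and that no further relations from the ideal are imposed on them: one needs to check that the ideal generated by bi-quadratic (or higher) monomials intersects the subspace of mono-$e$ and mono-$f$ monomials trivially. I would handle this either by appealing directly to the Lyndon/Hall basis of the free Lie algebra (where $\operatorname{ad}(f)^{k-1}(e)$ and $\operatorname{ad}(e)^{k-1}(f)$ are basis elements and the ideal is supported on other basis elements), or more concretely by a weight/character argument: the ideal is a sum of Galois-isotypic pieces of the form $\chi^{a}\bar\chi^{b}$ with $a,b\geq 2$, which does not contain the characters $\chi^{k-1}\bar\chi$ or $\chi\bar\chi^{k-1}$ for $k\geq 3$, giving the independence for free. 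Once this is established the lemma follows, and the reference to \cite{kimCM} confirms the answer.
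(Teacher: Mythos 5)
The paper does not actually prove this lemma: immediately before stating it, the authors write that it ``summarises \cite[\S 1]{kimCM}'' and simply cite Kim's work. Your proposal supplies a direct, self-contained argument, which is therefore a genuinely different (and more informative) route than the paper's, and it is correct. The key points you identify are exactly the right ones. The free Lie algebra on $\{e,f\}$ is multi-graded by $(\deg_e,\deg_f)$, each of the multi-degree $(1,k-1)$ and $(k-1,1)$ pieces is one-dimensional (spanned by $\mathrm{ad}(f)^{k-1}(e)$ and $\mathrm{ad}(e)^{k-1}(f)$, respectively, by the Witt formula or a Hall basis), and the Lie ideal defining $L'$ is generated by homogeneous elements of multi-degree $(a,b)$ with $a,b\geq 2$; since bracketing only increases multi-degree, the ideal is supported entirely in multi-degrees $(a,b)$ with $a,b\geq 2$ and thus misses the surviving monomials. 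Your ``character argument'' is really this multi-degree argument in Galois-equivariant dress, and it correctly establishes both the non-vanishing and the independence, so the a priori upper bound of two on $\dim V'_k$ for $k\geq 3$ is attained. The identification of the Galois action via $\chi\bar\chi=\chi_{\mathrm{cyc}}$ (the Weil pairing on $V_p(E)$) is correct in all degrees, and you are right that the $n$ in the statement should read $k$. The $k=2$ case is also fine: the ideal lives in degree $\leq -4$, so $\gr_{-2}L'=\gr_{-2}L$ is spanned by $[e,f]$ and the Weil pairing gives the $G_\bQ$-equivariant identification with $\bQ_p(1)$. In short, you have filled in a proof that the paper delegates to \cite{kimCM}, and your argument is sound.
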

	
	Using Lemma \ref{lem:quotgalmodule}, we extract the dimensions of the local and global Selmer groups.
	\begin{lemma}\label{lem:CMEClocSelmerdim}
		For the local Selmer groups, we have
		\[
		\dim \HH^1_f(G_p;V'_k) = \begin{cases}
			1, & k=1,\\
			1, & k=2,\\
			2, & k\geq 3.
		\end{cases}
		\]
	\end{lemma}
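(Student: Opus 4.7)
The plan is to apply the Bloch--Kato exponential exact sequence to each one-dimensional crystalline summand of $V'_k$ supplied by Lemma~\ref{lem:quotgalmodule}. Since $p$ splits in $K$, any decomposition group at a prime above $p$ lies inside $G_K$, so the $G_K$-equivariant decompositions of that lemma are also $G_p$-equivariant. For a crystalline $G_p$-representation $V$, the Bloch--Kato sequence
\[
0 \to \HH^0(G_p,V) \to \sD_\cris(V) \xrightarrow{(1-\phi,\pi)} \sD_\cris(V)\oplus \sD_\dR(V)/\sD_\dR^+(V) \to \HH^1_f(G_p,V) \to 0
\]
yields
\[
\dim \HH^1_f(G_p, V) = \dim \HH^0(G_p, V) + \dim \sD_\dR(V)/\sD_\dR^+(V),
\]
with $\sD_\dR^+(V) \coloneqq \Fil^0 \sD_\dR(V)$. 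It therefore suffices to compute these two quantities for each summand.

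First I would verify that $\HH^0(G_p,\bQ_p(\psi))=0$ for every character $\psi$ appearing. When $k=1$, the characters $\chi,\bar\chi$ have Frobenius eigenvalues $\pi,\bar\pi$ of absolute value $\sqrt{p}$, hence are non-trivial; when $k\geq 2$, every summand is pure of Galois weight $-k\neq 0$, so its Frobenius eigenvalues are Weil numbers of non-zero weight and in particular not equal to $1$. So the Bloch--Kato formula reduces to $\dim\HH^1_f(G_p,\bQ_p(\psi)) = \dim\sD_\dR(\bQ_p(\psi))/\sD_\dR^+(\bQ_p(\psi))$ in every case.

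It then remains to compute $\sD_\dR/\sD_\dR^+$ via Hodge--Tate weights. With the convention that $\bQ_p(1)$ has HT weight $-1$ (so that $\HH^1(X)\in\REP_\cris^+$), a one-dimensional crystalline character of HT weight $w$ satisfies $\dim\sD_\dR/\sD_\dR^+ = 1$ if $w\leq -1$ and $0$ if $w\geq 0$. The Hodge--Tate decomposition of $V_p(E)$ at the ordinary split prime $p$ forces $\{\chi,\bar\chi\}$ to have HT weights $\{0,-1\}$, and HT weights are additive under tensor product. Tabulating the cases: for $k=1$ the two summands of $V_p(E)$ contribute $0$ and $1$, giving total $1$; for $k=2$ the single summand $\bQ_p(1)$ has HT weight $-1$ and contributes $1$; and for $k\geq 3$ the summands $\chi^{k-2}(1)$ and $\bar\chi^{k-2}(1)$ have HT weights $-1$ and $1-k$ (up to relabelling of $\chi,\bar\chi$), both $\leq -1$, so each contributes $1$ for a total of $2$. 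The only mildly delicate step is this HT weight bookkeeping, but it is routine once one accepts the ordinary HT decomposition of the Tate module of $E$ at a split prime; no genuine obstacle arises.
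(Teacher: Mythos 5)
Your proof is correct and follows essentially the same route as the paper's: both reduce to counting negative Hodge--Tate weights via the Bloch--Kato exponential sequence (the paper cites \cite[Corollary~3.8.4]{BlochKato} directly, giving $\dim\HH^1_f(G_p,V) = \#\{\text{negative HT weights}\}$ once $\HH^0(G_p,V)=0$), and both then tabulate the weights of the summands from Lemma~\ref{lem:quotgalmodule}. The only cosmetic difference is that for $k\geq 3$ you compute the two HT weights explicitly as $-1$ and $1-k$, while the paper observes more cheaply that $V'_k$ embeds into $V_p(E)^{\otimes(k-2)}(1)$, all of whose HT weights are negative; both yield the count of $2$.
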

	
	\begin{proof}
		As $\HH^0(G_p;V'_k)=0$ for $k\geq 1$, it follows from \cite[Corollary 3.8.4]{BlochKato} that $\dim \HH^1_f(G_p;V'_k)$ is equal to the number of negative Hodge--Tate weights of $V'_k$.
		For $k=1$, the Tate module $V_p(E)$ has Hodge--Tate weights $0$ and $-1$, each with multiplicity $1$, from which the result follows.
		For $k=2$, the result is clear.
		For $k\geq 3$, note that $\bQ_p(\chi^{n-2}(1))\oplus\bQ_p(\bar{\chi}^{n-2}(1))$ is a subrepresentation of $V_p(E)^{\otimes k-2}(1)$, and all Hodge--Tate weights of the latter are negative, thus the result follows.
	\end{proof}
	
	From now on, we will assume the Tate--Shafarevich Conjecture and the following non-vanishing result for $p$-adic $L$-functions as in the hypothesis of \cite[Theorem 0.2]{kimCM}.
	\begin{conjecture}\label{conj:non-vani-padicLfct}
		The $p$-adic $L$-functions attached to the $\pi$- (resp.\ $\bar{\pi}$-)power-torsion of $E/K$ do not vanish at negative powers of $\chi$ (resp.\ $\bar{\chi}$). As mentioned in \cite[p.2]{kimCM}, this is a conjecture in folklore.
	\end{conjecture}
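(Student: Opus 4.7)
The final statement asserts non-vanishing of Katz-style $p$-adic $L$-functions attached to the Hecke characters $\chi,\bar\chi$ of the CM field $K$ at negative integer powers. Since this is an acknowledged folklore conjecture, my plan is to sketch the two standard routes --- analytic interpolation and algebraic Iwasawa theory --- and identify where each runs into real trouble. The first step is to set up the $L$-function concretely. Because $p=\pi\bar\pi$ splits in $K$, Katz's construction gives a two-variable $p$-adic $L$-function for $K$; restricting to the one-parameter family of powers of $\chi$ (resp.\ $\bar\chi$) produces power series $L_p(\chi),L_p(\bar\chi)\in\bZ_p\llbrack T\rrbrack$ whose values interpolate suitably modified classical Hecke $L$-values divided by CM periods, for exponents $s$ in the classical critical strip.

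For the analytic route, I would apply classical non-vanishing theorems (Rohrlich, or analogues for Hecke characters) at the interpolation points, then use the functional equation of $L_p$ --- which exchanges $\chi^s$ with $\chi^{1-s}\cdot N$ for $N$ the norm character --- to transport non-vanishing from the classical critical region into the negative-integer region. For the algebraic route, Rubin's Iwasawa Main Conjecture for imaginary quadratic fields identifies $L_p(\chi)$ up to units with the characteristic element of an Iwasawa module built from elliptic units, so non-vanishing at $\chi^{-n}$ becomes the question of finiteness of a specific Selmer group at that specialization; this one attacks with Euler-system bounds (Kolyvagin--Rubin) plus an appropriate control theorem relating the Iwasawa Selmer module to its specialized analogue.

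The hard part is that the conjecture is really open in full generality. Known partial results cover characters of finite order (Rohrlich), Hida families (Hida), and anticyclotomic specializations (Hsieh), but none handle all triples $(K,E,p)$ together with all negative integers $n\ge1$. A complete proof would likely require a careful $\mu$- and $\lambda$-invariant analysis of $L_p(\chi)$ ruling out exceptional zeros coming from Euler-factor modifications at $\pi$ and $\bar\pi$ --- and the absence of such an argument is precisely why this remains folklore. For the purposes of the rest of this appendix, I would therefore expect the conjecture to be used as a black-box input, exactly as the Bloch--Kato conjecture is used in the main body of the paper, with the payoff being a computation of $\dim \HH^1_f(G_\bQ,V'_k)$ by combining Lemma~\ref{lem:quotgalmodule}, Shapiro's lemma to reduce to $G_K$-cohomology, and the Main-Conjecture translation of non-vanishing into Selmer vanishing.
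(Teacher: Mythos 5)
This is a \emph{conjecture}, not a theorem: the paper offers no proof of it and, following Kim, cites it as a folklore assumption that is fed into the Chabauty--Kim machine as a black box. Your final paragraph correctly identifies this, and correctly places it alongside the Bloch--Kato assumption used in the main body. The preceding two paragraphs --- the sketch of an analytic route through Rohrlich-type non-vanishing and a functional equation, and an algebraic route through Rubin's Main Conjecture and Euler systems --- are a reasonable informal survey of why one believes the conjecture and why it remains open, but they are not a proof and are not meant to be; nothing of this sort appears in the paper, which simply assumes the statement. In short, there was nothing to prove here, and you ultimately say so; just be aware that the bulk of your writeup is background context rather than an argument, and the only content that matches the paper's treatment is your closing sentence treating the conjecture as a black-box input used (with Lemma~\ref{lem:quotgalmodule} and the Kim computation) to pin down $\dim \HH^1_f(G_\bQ,V'_k)$.
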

	
	\begin{lemma}\label{lem:CMECglobSelmerdim}
		Assume the Tate--Shafarevich Conjecture and Conjecture \ref{conj:non-vani-padicLfct}.
		For the global Selmer groups, we then have
		\[
		\dim \HH^1_f(G_\bQ;V'_k) = \begin{cases}
			r, & k=1,\\
			0, & k=2,\\
			1, & k\geq 3,
		\end{cases}
		\]
		where $r$ is the rank of $E(\bQ)$.
	\end{lemma}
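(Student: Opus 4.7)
The plan is to split into three cases according to the value of $k$. The cases $k=1$ and $k=2$ are essentially standard; for $k\geq 3$ I would apply the Poitou--Tate formula (Fact~\ref{fact:PT}) together with Shapiro's lemma, reducing the problem to vanishing of a dual Selmer group, which is provided by Rubin's theorem conditional on Conjecture~\ref{conj:non-vani-padicLfct}.

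For $k=1$, since $V'_1 = V_p(E)$, the group $\HH^1_f(G_\bQ, V'_1)$ is the $p^\infty$-Selmer group of $E/\bQ$, which has dimension $r$ assuming finiteness of the Tate--Shafarevich group. For $k=2$, $V'_2 \cong \bQ_p(1)$ and Kummer theory (or Fact~\ref{fact:PT} applied directly, using $\HH^0(G_\bQ,\bQ_p) = \bQ_p$, $\HH^1_f(G_\bQ,\bQ_p)=0$, and Hodge--Tate weight $-1$) gives $\HH^1_f(G_\bQ, \bQ_p(1)) \cong \bZ^\times \otimes \bQ_p = 0$.

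For $k\geq 3$, I would apply Fact~\ref{fact:PT} to $V = V'_k$. Since $V'_k$ is pure of weight $-k \leq -3$ and $(V'_k)^\vee(1)$ is pure of weight $k-2 \geq 1$, both $\HH^0(G_\bQ, V'_k)$ and $\HH^0(G_\bQ, (V'_k)^\vee(1))$ vanish. Lemma~\ref{lem:CMEClocSelmerdim} provides the local term $\dim \sD_\dR(V'_k)/\sD_\dR^+(V'_k) = 2$ at $p$. At infinity, the $G_K$-isotypic decomposition $V'_k|_{G_K} = \bQ_p(\chi^{k-2}(1)) \oplus \bQ_p(\bar\chi^{k-2}(1))$ is swapped by complex conjugation (which additionally acts by $-1$ on $\bQ_p(1)$), and a short calculation shows $\dim (V'_k)^\sigma = 1$. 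Plugging these into Fact~\ref{fact:PT} reduces the claim to
\[
\dim \HH^1_f(G_\bQ, (V'_k)^\vee(1)) = 0.
\]

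To establish this vanishing, I would identify $V'_k \cong \operatorname{Ind}_{G_K}^{G_\bQ} \bQ_p(\chi^{k-2}(1))$, which holds because complex conjugation exchanges the $\pi$- and $\bar\pi$-components. Dualizing and twisting, $(V'_k)^\vee(1) \cong \operatorname{Ind}_{G_K}^{G_\bQ} \bQ_p(\chi^{-(k-2)})$, and Shapiro's lemma for Bloch--Kato Selmer groups yields
\[
\HH^1_f(G_\bQ, (V'_k)^\vee(1)) \cong \HH^1_f(G_K, \bQ_p(\chi^{-(k-2)})).
\]
The latter Selmer group vanishes by Rubin's theorem (the Iwasawa Main Conjecture for imaginary quadratic fields, in the form used by Kim in \cite{kimCM}), provided that the $\pi$-adic $L$-function of $E/K$ does not vanish at $\chi^{-(k-2)}$, which is the content of Conjecture~\ref{conj:non-vani-padicLfct}. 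The main obstacle is this last step, where one must invoke Rubin's theorem in precisely the form needed to translate non-vanishing of the $p$-adic $L$-function into vanishing of the Bloch--Kato Selmer group; the weight vanishings, the archimedean computation, and the Shapiro reduction are essentially bookkeeping.
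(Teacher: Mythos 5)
Your treatment of $k=1$ and $k=2$ matches the paper's exactly. For $k\geq 3$, however, you take a genuinely different route. The paper's proof does not apply Poitou--Tate at all: it observes that $\HH^1_f(G_\bQ,V'_k)$ embeds into the relaxed Selmer group $\HH^1_{f,T\setminus\{p\}}(G_{\bQ,T},V'_k)$ (classes crystalline at~$p$ and unramified outside~$T$), cites Kim's explicit computation (conditional on Conjecture~\ref{conj:non-vani-padicLfct}) that this relaxed group is one-dimensional, and thereby obtains only the inequality $\dim\HH^1_f(G_\bQ,V'_k)\leq 1$, remarking that the equality follows from Fact~\ref{fact:PT} but leaving that to the reader since the inequality suffices for the application. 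Your proof instead establishes the equality directly: you feed the weight vanishings $\HH^0_f(G_\bQ,V'_k)=\HH^0_f(G_\bQ,(V'_k)^\vee(1))=0$, the local term $\dim\sD_\dR/\sD^+_\dR=2$ from Lemma~\ref{lem:CMEClocSelmerdim}, and the archimedean term $\dim(V'_k)^\sigma=1$ (correct: $\sigma$ swaps the two $G_K$-eigenlines, so $V'_k$ is the regular representation of $\langle\sigma\rangle$ and has a one-dimensional fixed subspace regardless of the sign twist) into Fact~\ref{fact:PT}, reducing the claim to $\HH^1_f(G_\bQ,(V'_k)^\vee(1))=0$; you then identify $(V'_k)^\vee(1)\cong\operatorname{Ind}_{G_K}^{G_\bQ}\bQ_p(\chi^{-(k-2)})$, apply Shapiro's lemma for Bloch--Kato Selmer groups, and invoke Rubin's Main Conjecture for imaginary quadratic fields (conditional on the non-vanishing conjecture). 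Both routes ultimately rest on the same Iwasawa-theoretic input from Kim's paper; yours is more transparent about the mechanism (the dual Selmer group over $K$ is what dies) and actually yields the stated equality rather than just the upper bound, at the cost of needing to verify that Shapiro's lemma respects the Bloch--Kato local conditions at each place, and that Rubin's theorem delivers the vanishing of the Bloch--Kato $\HH^1_f$ over $K$ in precisely the form you need. As you acknowledge, that last step is where the real work lies, and it is exactly what the paper outsources to Kim's Claim~3.2.
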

	
	\begin{proof}
		The case $k=1$ holds by the Tate--Shafarevich Conjecture.
		For $k=2$, use that $\HH^1_f(G_\bQ;\bQ_p(1)) \cong \bZ^\times\otimes_\bZ\bQ_p = 0$, which follows by \cite[Proposition 5.4]{BlochKato} for $A=\Gm$.
		For $k\geq 3$, note that $\HH^1_f(G_\bQ;V'_k)$ embeds into $\HH^1_{f,T\setminus\{p\}}(G_{\bQ,T};V'_k)$, where $T$ is any finite set of primes containing $S\cup\{p\}$ and all primes where $E$ has bad reduction, and $\HH^1_{f,T\setminus\{p\}}(G_{\bQ,T};V'_k)$ denotes the cohomology classes which are crystalline at~$p$ and unramified outside~$T$. Conjecture~\ref{conj:non-vani-padicLfct} and \cite[p.\ 6, after Claim 3.2]{kimCM} imply that the latter has dimension one, so $\dim \HH^1_f(G_\bQ;V'_k) \leq 1$.
		This is all we need in what follows, but one can actually show that $\dim \HH^1_f(G_\bQ;V'_k) = 1$ using Fact \ref{fact:PT}.
	\end{proof}
	
	\begin{remark}\label{rem:globSeldimonelargek}
		\begin{enumerate}
			\item Without assuming the Tate--Shafarevich Conjecture nor Conjecture \ref{conj:non-vani-padicLfct}, one gets $\dim \HH^1_f(G_\bQ;V'_1) = r_p$, where $r_p$ is the $p^\infty$-Selmer rank of $E$, and $\HH^1_f(G_\bQ;V'_2) = 0$. Moreover, \cite[\S 3]{kimCM} still yields the bound $\dim \HH^1_f(G_\bQ;V'_k) \leq 1$ (and hence $=1$) for $k$ large enough.
			\item The Bloch--Kato Conjecture \ref{conj:BK} also implies the dimension formulae of Lemma \ref{lem:CMECglobSelmerdim} for $k\geq 3$ by the argument of Lemma \ref{lem:global_dimensions}.
		\end{enumerate}
	\end{remark}
	
	\begin{corollary}
		Let $s=\# S$.
		Assuming the Tate--Shafarevich Conjecture and Conjecture \ref{conj:non-vani-padicLfct}, we have
		\begin{align*}
			\HS'_\loc(t) &= (1-t)^{-1} (1-t^2)^{-1} \prod_{k\geq 3} (1-t^k)^{-2},\\
			\HS'_\glob(t) &= (1-t)^{-r} (1-t^2)^{-s} \prod_{k\geq 3} (1-t^k)^{-1}.
		\end{align*}
	\end{corollary}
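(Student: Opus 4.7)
The plan is to compute both series by direct substitution, since $\HS'_\loc(t)$ and $\HS'_\glob(t)$ are defined (by the same convention as in the main body) as infinite products whose exponents are dimensions of Bloch--Kato Selmer groups attached to the weight-graded pieces $V'_k$, and those dimensions are precisely what Lemmas \ref{lem:CMEClocSelmerdim} and \ref{lem:CMECglobSelmerdim} compute. Concretely, I will start from
\begin{align*}
\HS'_\loc(t) &= \prod_{k\geq 1}(1-t^k)^{-\dim \HH^1_f(G_p, V'_k)}, \\
\HS'_\glob(t) &= (1-t^2)^{-s}\prod_{k\geq 1}(1-t^k)^{-\dim \HH^1_f(G_\bQ, V'_k)},
\end{align*}
and simply plug in the local and global dimensions, separating the low-weight factors from the tail.

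For the local series, Lemma \ref{lem:CMEClocSelmerdim} supplies the exponents $1,1,2,2,2,\ldots$ in weights $k=1,2,3,4,5,\ldots$, producing the factor $(1-t)^{-1}(1-t^2)^{-1}$ from $k \leq 2$ and $\prod_{k\geq 3}(1-t^k)^{-2}$ from the tail. For the global series, Lemma \ref{lem:CMECglobSelmerdim} supplies the exponents $r,0,1,1,1,\ldots$; the $k=2$ factor drops out, the $k=1$ factor gives $(1-t)^{-r}$, the prefactor supplies $(1-t^2)^{-s}$, and the tail contributes $\prod_{k\geq 3}(1-t^k)^{-1}$.

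There is no significant obstacle at the level of the corollary itself---it is pure bookkeeping assembly of the two preceding lemmas. All of the genuine content sits upstream: the local formula rests only on the Bloch--Kato exponential sequence and the Hodge--Tate weight computation in Lemma \ref{lem:CMEClocSelmerdim}, while the global formula depends on the Tate--Shafarevich Conjecture (for $k=1$), the elementary identity $\HH^1_f(G_\bQ,\bQ_p(1)) = \bZ^\times\otimes_\bZ \bQ_p = 0$ (for $k=2$), and Conjecture \ref{conj:non-vani-padicLfct} on non-vanishing of $p$-adic $L$-functions combined with the Selmer group estimates of \cite{kimCM} (for $k\geq 3$).
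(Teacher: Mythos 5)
Your proposal matches the paper's proof exactly: the corollary is proved by plugging the dimensions from Lemmas \ref{lem:CMEClocSelmerdim} and \ref{lem:CMECglobSelmerdim} into the defining product expressions for $\HS'_\loc$ and $\HS'_\glob$. Your remarks correctly locate all the substantive content upstream in those two lemmas.
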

	
	\begin{proof}
		By definition of $\HS'_\loc$ and $\HS'_\glob$ using Lemmas \ref{lem:CMEClocSelmerdim} and \ref{lem:CMECglobSelmerdim}.
	\end{proof}
	
	To apply the effective Chabauty--Kim Theorem \cite[Theorem 6.2.1]{alex:effective}, we need to find an upper bound for the smallest integer $m$ such that the coefficient $A_m$ of $\frac{1}{1-t}\HS'_\glob(t)=\sum_{n\geq 0} A_n t^n$ is strictly smaller than the corresponding coefficient $B_m$ of $\frac{1}{1-t}\HS'_\loc(t) = \sum_{n\geq 0} B_n t^n$.
	%See Table \ref{table:m} for the value of $m$ for small values of $r$ and $s$, calculated using SageMath.
	%\begin{table}[h!]
	%\centering
	%\begin{tabular}{ c||c|c|c|c|c|c|c|c|c|c|c|c }
	%  & 0 & 1 & 2 & 3 & 4 & 5 & 6 & 7 & 8 & 9 & 10 & 11\\
	% \hline
	% \hline
	%0 & 1 & 1 & 1 & 1 & 1 & 1 & 1 & 1 & 1 & 1 & 1 & 1\\
	%1 & 2 & 3 & 7 & 23 & 53 & 100 & 168 & 259 & 376 & 520 & 693 & 898\\
	%2 & 8 & 18 & 38 & 74 & 126 & 200 & 297 & 419 & 569 & 749 & 960 & 1204 \\
	%3 & 29 & 55 & 95 & 153 & 233 & 335 & 464 & 620 & 806 & 1024 & 1274 & 1560 \\
	%4 & 72 & 118 & 182 & 266 & 375 & 509 & 672 & 864 & 1088 & 1345 & 1638 & 1966 \\
	%5 & 141 & 211 & 301 & 415 & 556 & 724 & 923 & 1153 & 1417 & 1716 & 2052 & 2425 \\
	%6 & 241 & 337 & 457 & 603 & 778 & 983 & 1220 & 1490 & 1796 & 2138 & 2518 & 2937 \\
	%7 & 374 & 500 & 652 & 833 & 1044 & 1287 & 1564 & 1876 & 2225 & 2612 & 3038 & 3506 \\
	%8 & 543 & 702 & 889 & 1105 & 1356 & 1639 & 1958 & 2313 & 2707 & 3141 & 3615 & 4131 \\
	%9 & 752 & 945 & 1169 & 1425 & 1715 & 2040 & 2403 & 2803 & 3244 & 3725 & 4248 & 4814
	%\end{tabular}
	%\smallskip
	%\caption{The entry in row $r$ and column $s$ is the smallest integer $m$ such that $A_m<B_m$.}
	%\label{table:m}
	%\end{table}
	In the following paragraphs, we give a bound on $m$, following a strategy inspired by \cite[\S 9.6]{BrownIntegral} based on the asymptotics of partition numbers.
	First of all, it is enough to compare the coefficients of the power series
	\begin{align*}
		\sum_{n\geq 0} b_n t^n &\coloneqq  \frac{1}{1-t^2} \frac{1}{1-t}\HS'_\loc(t) = f(t)^2,\\ 
		\sum_{n\geq 0} a_n t^n &\coloneqq  \frac{1}{1-t^2}\frac{1}{1-t}\HS'_\glob(t) = (1-t)^{-r} (1-t^2)^{-s} f(t),
	\end{align*}
	where
	\[
	f(t)=\prod_{k\geq 1} (1-t^k)^{-1} = \sum_{n\geq 0} p(n) t^n,
	\]
	with $p(n)$ being the partition function.
	Indeed, if $a_m < b_m$, then the formula $a_m=A_m + A_{m-2}+\dots+A_{((-1)^{m+1}+1)/2}$ and its analogue for $b_m$ show that there exists some $m'\leq m$ with $A_{m'} < B_{m'}$ as desired.
	To compare $a_n$ and $b_n$, let us first observe that on the global side we certainly have $a_n\leq \widetilde{a}_n$, where
	\[
	\sum_{n\geq 0} \widetilde{a}_n t^n = (1-t)^{-r'} f(t), \quad r'=r+s.
	\]
	We assume from now on that $r'\geq 2$.
	
	\begin{theorem}\label{thm:boundCMECm}
		There exists a constant $C_4>0$ such that for all $r'\geq 2$ there exists an $m\leq C_4 \cdot (r')^2 \cdot \log(r')^2$ with $\widetilde{a}_m < b_m$.
	\end{theorem}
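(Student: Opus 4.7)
The plan is to compare exponential growth rates using the Hardy--Ramanujan asymptotic $p(n) \sim (4n\sqrt{3})^{-1}\exp(\pi\sqrt{2n/3})$ for the partition function. The decisive observation is that $b_m \geq p(\lceil m/2\rceil) p(\lfloor m/2\rfloor) \geq c_2 m^{-2}\exp(2\pi\sqrt{m/3})$ grows at exponential rate $2\pi/\sqrt{3}$, strictly faster by a factor of $\sqrt{2}$ than the rate $\pi\sqrt{2/3}$ of $p(n)$ itself. This excess is what absorbs the polynomial-in-$r'$ blowup coming from the factor $(1-t)^{-r'}$.

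First I would upper-bound $\widetilde{a}_m$ using the convolution
\[
\widetilde{a}_m = \sum_{k=0}^m \binom{k+r'-1}{r'-1} p(m-k) \leq \binom{m+r'-1}{r'-1}\sum_{j=0}^m p(j),
\]
combined with the effective Hardy--Ramanujan upper bound $\sum_{j\leq m} p(j) \leq C_1 \exp(\pi\sqrt{2m/3})$ and Stirling applied to the binomial, to obtain
\[
\log \widetilde{a}_m \leq (r'-1)\log\frac{m+r'-1}{r'-1} + (r'-1) + \pi\sqrt{2m/3} + O(\log m).
\]
Subtracting the lower bound $\log b_m \geq 2\pi\sqrt{m/3} - O(\log m)$, the desired inequality $\widetilde{a}_m < b_m$ reduces to
\[
(r'-1)\log\frac{m+r'-1}{r'-1} + O(r' + \log m) < (2-\sqrt{2})\pi\sqrt{m/3}.
\]

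Setting $m = C_4(r')^2 \log(r')^2$, the right-hand side equals $(2-\sqrt{2})\pi\sqrt{C_4/3}\cdot r'\log(r') + O(\log r')$, while the left-hand side is $(r'-1)\log(r') + O(r'\log\log r') + O(r') = O(r'\log r')$. Both sides thus scale linearly in $r'\log(r')$, and any absolute constant $C_4$ whose leading coefficient strictly dominates that of the left-hand side, with enough slack to absorb the lower-order terms and to cover the finitely many small values $r'\geq 2$, finishes the proof. The principal obstacle is bookkeeping rather than a substantive mathematical difficulty: one must verify that all implicit constants in the Hardy--Ramanujan-type inequalities can be chosen absolutely, independent of both $m$ and $r'$. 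Quantitative versions of these estimates are classical, and indeed the strategy advertised in the paper follows \cite[\S 9.6]{BrownIntegral}, which carries out essentially this type of calculation.
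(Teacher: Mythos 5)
Your proposal is correct and follows essentially the same route as the paper: upper-bound $\widetilde{a}_m$ via the Hardy--Ramanujan estimate and a binomial-coefficient bound from $(1-t)^{-r'}$, lower-bound $b_m$ via the middle term of the convolution $\sum_k p(k)p(m-k)$, and observe that the exponent gap $(2-\sqrt{2})\pi\sqrt{m/3}$ overtakes the $O(r'\log m)$ polynomial factor once $m\gtrsim(r')^2\log(r')^2$. The paper uses the slightly cruder bound $\widetilde{a}_n\le(n+1)^{r'}p(n)$ in place of your binomial estimate, but this does not change the asymptotic analysis.
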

	
	\begin{remark}
		Computations with SageMath suggest that the bound of Theorem \ref{thm:boundCMECm} is close to optimal.
	\end{remark}
	
	The proof needs the following three lemmas.
	We will make heavy use of the asymptotic behaviour of the partition numbers \cite[(1.41)]{hardyramanujan}, which implies the existence of two real constants $C_0,C_1>0$ such that
	\begin{equation}\label{eq:asympt-partition}
		C_0 \frac{1}{n} e^{\pi\sqrt{2n/3}} \leq p(n) \leq C_1 \frac{1}{n} e^{\pi\sqrt{2n/3}} \quad \text{for all } n\in \NN.
	\end{equation}
	
	\begin{lemma}\label{lem:global-asympt}
		The global coefficients satisfy
		\[
		\widetilde{a}_n \leq 2^{r'} C_1 n^{r'-1} e^{\pi\sqrt{2n/3}} \quad \text{for all } n\in\NN.
		\]
	\end{lemma}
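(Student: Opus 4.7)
The plan is to expand $\widetilde{a}_n$ as the convolution coming from the product $(1-t)^{-r'}f(t)$, and then bound the two factors separately using elementary means plus the asymptotic upper bound in \eqref{eq:asympt-partition}. Concretely, from the binomial series $(1-t)^{-r'}=\sum_{m\geq 0}\binom{r'-1+m}{r'-1}t^m$ we get
\[
\widetilde{a}_n = \sum_{k=0}^{n}\binom{r'-1+n-k}{r'-1}\,p(k).
\]
Since the binomial $\binom{r'-1+m}{r'-1}$ is increasing in $m$, the first factor is bounded above by $\binom{r'-1+n}{r'-1}$, uniformly in $k$.

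Next I will bound this binomial by a clean power of $n$. Writing $\binom{r'-1+n}{r'-1}=\prod_{i=1}^{r'-1}\frac{n+i}{i}$ and observing that $\frac{n+i}{i}\leq 2n$ holds for all $n,i\geq 1$ (since $n+i\leq 2ni\iff i(2n-1)\geq n$, which is automatic as $\tfrac{n}{2n-1}\leq 1$), I conclude
\[
\binom{r'-1+n}{r'-1}\leq (2n)^{r'-1}=2^{r'-1}n^{r'-1}.
\]
For the other factor, the partition function $p$ is non-decreasing, so $\sum_{k=0}^{n}p(k)\leq (n+1)p(n)$, and the upper half of \eqref{eq:asympt-partition} gives $p(n)\leq C_1 n^{-1}e^{\pi\sqrt{2n/3}}$, hence $\sum_{k=0}^{n}p(k)\leq 2C_1 e^{\pi\sqrt{2n/3}}$ for $n\geq 1$.

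Putting these together yields
\[
\widetilde{a}_n\ \leq\ 2^{r'-1}n^{r'-1}\cdot 2C_1 e^{\pi\sqrt{2n/3}}\ =\ 2^{r'}C_1 n^{r'-1}e^{\pi\sqrt{2n/3}},
\]
which is the claimed inequality. This is a routine calculation with no genuine obstacle; the only point requiring a touch of care is the elementary estimate $\frac{n+i}{i}\leq 2n$, which is what lets the factor $n^{r'-1}$ appear cleanly without a superfluous shift.
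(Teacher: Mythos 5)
Your proof is correct and takes essentially the same approach as the paper: both bound the convolution of $(1-t)^{-r'}$ with $f(t)$ using the monotonicity of $p$, the Hardy--Ramanujan upper bound from \eqref{eq:asympt-partition}, and a crude estimate of the form $(n+i)/i\leq 2n$ (the paper uses $(n+1)^{r'}/n^{r'}\leq 2^{r'}$). The only difference is cosmetic: the paper phrases the first step informally, counting $\widetilde{a}_n$ by induction as a sum of at most $(n+1)^{r'}$ partition numbers $p(k)$ with $k\leq n$, whereas you write out the explicit convolution with binomial coefficients and bound those directly.
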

	
	\begin{proof}
		By induction on $r'$, the coefficient $\widetilde{a}_n$ is a sum of $(n+1)^{r'}$ many partition numbers $p(k)$ with $k\leq n$.
		The partition numbers form an increasing sequence, so together with \eqref{eq:asympt-partition} we get the bound
		\[
		\widetilde{a}_n \leq (n+1)^{r'} p(n) \leq C_1 \frac{(n+1)^{r'}}{n} e^{\pi\sqrt{2n/3}}.
		\]
		As $\frac{(n+1)^{r'}}{n^{r'}} \leq 2^{r'}$, the result follows.
	\end{proof}
	
	\begin{lemma}\label{lem:locasymp}
		There is a constant $C_2>0$ such that the local coefficients $b_n$ satisfy
		\[
		b_n \geq C_2\frac{1}{n^2} e^{\pi\sqrt{4n/3}} \quad \text{for all } n\in\NN.
		\]
	\end{lemma}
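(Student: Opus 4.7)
The plan is to lower-bound $b_n$ by the diagonal term in its convolution expansion. Since $f(t)^2 = \bigl(\sum_k p(k)\, t^k\bigr)^2$, we have
\[
b_n = \sum_{k=0}^n p(k)\, p(n-k) \;\geq\; p(\lfloor n/2\rfloor)\cdot p(\lceil n/2\rceil).
\]
This choice is essentially optimal: by concavity of $\sqrt{x}$, the quantity $\sqrt{k}+\sqrt{n-k}$ is maximised at $k=n/2$, and the resulting exponential rate is $\exp\bigl(2\pi\sqrt{(n/2)\cdot 2/3}\bigr) = e^{\pi\sqrt{4n/3}}$, which is exactly the exponent appearing in the target inequality.

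Applying the lower bound in~\eqref{eq:asympt-partition} to each factor yields
\[
b_n \;\geq\; \frac{C_0^2}{\lfloor n/2\rfloor\cdot\lceil n/2\rceil}\,\exp\!\Bigl(\pi\sqrt{2\lfloor n/2\rfloor/3}+\pi\sqrt{2\lceil n/2\rceil/3}\Bigr).
\]
For even $n$, the exponent equals $\pi\sqrt{4n/3}$ exactly and the denominator is $n^2/4$, immediately giving the lemma with $C_2 = 4C_0^2$. For odd $n$, the denominator $(n^2-1)/4 \leq n^2/4$ is no worse; the subtlety is that the exponent $\pi\bigl(\sqrt{(n-1)/3}+\sqrt{(n+1)/3}\bigr)$ is strictly less than $\pi\sqrt{4n/3}$. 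The Taylor expansion $\sqrt{1\pm 1/n} = 1 \pm 1/(2n) - 1/(8n^2) + O(n^{-3})$ shows that the deficit equals $\pi/(4\sqrt{3}\,n^{3/2}) + O(n^{-7/2})$, which is bounded for all $n\geq 1$ and tends to~$0$. Its exponential therefore contributes only a bounded multiplicative loss, which can be absorbed into $C_2$.

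The only point to watch is that the resulting constant $C_2$ can be chosen uniformly over both parities of $n$ (and over any finite range of small $n$ where~\eqref{eq:asympt-partition} gives the weakest estimate); this is automatic from the boundedness of the odd-$n$ deficit and the positivity of $b_n$. No deeper obstacle arises: the entire argument reduces to applying the lower half of~\eqref{eq:asympt-partition} to a single term of the convolution $b_n = \sum_k p(k)p(n-k)$, together with elementary asymptotic estimates.
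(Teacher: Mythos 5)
Your proof is correct and follows essentially the same approach as the paper: lower-bound $b_n$ by the middle term(s) of the convolution $\sum_k p(k)p(n-k)$, apply the lower bound in~\eqref{eq:asympt-partition}, and observe that for odd $n$ the exponent deficit $\frac{\pi}{\sqrt{3}}\bigl(2\sqrt{n}-\sqrt{n-1}-\sqrt{n+1}\bigr)$ is bounded (tending to $0$), so it can be absorbed into $C_2$. Your write-up is a touch more explicit about the Taylor expansion of the deficit and about handling small $n$ via positivity, but the argument is the same.
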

	
	\begin{proof}
		By definition, we have
		\[
		b_n = \sum_{k=0}^n p(n) p(n-k),
		\]
		which is greater or equal to the middle term(s) of the sum, i.\,e.\ if $n$ is even we have $b_n \geq p(\frac{n}{2})^2$ and if $n$ is odd we have $b_n \geq 2p(\frac{n-1}{2})p(\frac{n+1}{2})$.
		Plugging in the lower bound in \eqref{eq:asympt-partition} immediately yields the desired bound if $n$ is even, and if $n$ is odd we use $2\sqrt{n}-(\sqrt{n-1}+\sqrt{n+1}) \to 0$ as $n\to \infty$ to adjust the exponents as well.
	\end{proof}
	
	It still remains to find an $m$ such that $\widetilde{a}_m<b_m$.
	Using Lemmas \ref{lem:global-asympt} and \ref{lem:locasymp}, we should compare the functions $\widetilde{a}(x)\coloneqq 2^{r'} C_1 x^{r'-1} e^{\pi\sqrt{2x/3}}$ and $b(x) \coloneqq  C_2 x^{-2} e^{\pi\sqrt{4x/3}}$ for $x>0$.
	Let us look at their logarithms $\alpha(x) \coloneqq  \log \widetilde{a}(x)$ and $\beta(x) \coloneqq  \log b(x)$.
	
	\begin{lemma}\label{lem:localvsglobalvialog}
		There is a constant $C_3>0$ such that for $r'\geq 2$ we have
		\[
		\alpha(x) < \beta(x) \quad \text{for all } x\geq C_3 \cdot (r')^2 \cdot \log(r')^2.
		\]
	\end{lemma}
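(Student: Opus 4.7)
The plan is a direct calculus comparison. Unpacking the definitions, we have
\[
\alpha(x) = r'\log 2 + \log C_1 + (r'-1)\log x + \pi\sqrt{2x/3},\qquad \beta(x) = \log C_2 - 2\log x + \pi\sqrt{4x/3},
\]
so the quantity we need to make positive is
\[
\beta(x) - \alpha(x) = \pi(\sqrt{2}-1)\sqrt{2x/3} - (r'+1)\log x - r'\log 2 + \log(C_2/C_1).
\]
The key point is the exponential gain: because the local series has exponent $\pi\sqrt{4x/3}$ against the global $\pi\sqrt{2x/3}$, the difference contains the dominant term $\pi(\sqrt{2}-1)\sqrt{2x/3}$, growing like $\sqrt{x}$, whereas the terms pulling in the opposite direction are at worst of order $(r'+1)\log x$ (the additive $r'\log 2$ and the constant $\log(C_1/C_2)$ being negligible in comparison).

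The main step is therefore to choose $C_3$ so that, substituting $x = C_3(r')^2\log(r')^2$, the $\sqrt{x}$-term beats the $(r'+1)\log x$-term with a margin that also absorbs the $r'\log 2$ and constant contributions. Concretely, at $x = C_3(r')^2\log(r')^2$ one has $\sqrt{x} = \sqrt{C_3}\,r'\log r'$ and $\log x = 2\log r' + 2\log\log r' + \log C_3 \leq 3\log r'$ for $r'\geq 2$ and $C_3$ in a fixed bounded range, say $C_3\leq r'$. Thus the dominant term of $\beta(x)-\alpha(x)$ is at least $\pi(\sqrt{2}-1)\sqrt{2/3}\,\sqrt{C_3}\,r'\log r'$, while $(r'+1)\log x\leq 3(r'+1)\log r' \leq 6r'\log r'$ and $r'\log 2 = O(r')$, which is dominated by $r'\log r'$ for $r'\geq 3$. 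Picking any $C_3$ with $\pi(\sqrt{2}-1)\sqrt{2C_3/3} > 6$, e.g.\ $C_3 = 100$, therefore makes $\beta(x)-\alpha(x) > 0$ at $x = C_3(r')^2\log(r')^2$, and since $\beta(x) - \alpha(x)$ is increasing for $x$ large (its derivative is dominated by $\frac{\pi(\sqrt{2}-1)}{\sqrt{6x}}$), the inequality persists for all larger $x$.

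The only real bookkeeping is verifying monotonicity of $\beta - \alpha$ once past the threshold, and dealing with small cases $r'=2,3$ separately if needed by enlarging $C_3$. Since both $\alpha'(x)$ and $\beta'(x)$ differ only by $\frac{\pi(\sqrt{2}-1)}{\sqrt{6x}} - \frac{r'+1}{x}$, which becomes positive once $\sqrt{x}\gtrsim (r'+1)$, and this is certainly the case when $x\geq C_3(r')^2\log(r')^2$, monotonicity follows. The proof is thus essentially a one-variable asymptotic argument; no serious obstacle is anticipated beyond careful constant-tracking.
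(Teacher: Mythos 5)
Your proof is correct and follows essentially the same approach as the paper: both compute $\beta(x)-\alpha(x)$ directly, isolate the dominant $\tau\sqrt{x}$ term against the $(r'+1)\log x$ and $r'\log 2$ terms, substitute $x = C_3(r')^2\log(r')^2$ to verify positivity at the threshold, and conclude by monotonicity of $\beta-\alpha$ past a point comparable to $(r'+1)^2$. (The paper works with $\gamma(u)=\beta(u^2)-\alpha(u^2)$ after the cosmetic substitution $u=\sqrt{x}$, and is a bit more careful in the constant-tracking — your intermediate claim $\log x\leq 3\log r'$ is not literally valid for $C_3=100$ and moderate $r'$, so the cutoff constant ends up being determined implicitly as in the paper rather than by the explicit $C_3=100$, but this does not affect the soundness of the argument.)
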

	
	\begin{proof}
		First of all, put $u = \sqrt{x}$ and $\tau \coloneqq  \left(\sqrt{2}-1\right)\pi\sqrt{\frac{2}{3}} >1$ and look at 
		\[
		\gamma(u) \coloneqq  \beta(u^2) - \alpha(u^2) = \tau u - 2(r'+1)\log(u) + (\log C_2 - \log C_1 - r'\log 2)
		\]
		It is enough to show that there is a constant $c_3>0$ such that $\gamma(u) > 0$ for all $u\geq c_3 r' \log(r')$.
		First observe that for $u\geq 4r'$ we have $\frac{d\gamma}{du}(u) = \tau - 2\frac{r'+1}{u} \geq \tau - 2 \frac{2r'}{4r'} > 0.06$, thus $\gamma(u)$ is increasing for $u\geq 4r'$.
		Secondly, note that
		\begin{equation}\label{seq:conv}
			\frac{2(r'+1)(\log r' + \log \log r') - (\log C_2 - \log C_1 - r'\log 2)}{\tau r' \log r' - 2(r'+1)} \longrightarrow \frac{2}{\tau}, \quad r' \longrightarrow \infty.
		\end{equation}
		In particular, there exists a constant $c_1>0$ such that % c_1 = 57 should work 
		\begin{equation}\label{eq:numdenombound}
			2(r'+1)(\log r' + \log \log r') - (\log C_2 - \log C_1 - r'\log 2) < c_1 (\tau r' \log r' - 2(r'+1))
		\end{equation}
		for all $r'\geq 9$ (so that the denominator in \eqref{seq:conv} is positive).
		With $c_2 = \max\{4,c_1\}$ we get for $r'\geq 9$ and $u\geq c_2 r' \log r'$ that, using that $\gamma$ is increasing followed by $c_2 \geq \log(c_2)$ and finally \eqref{eq:numdenombound},
		\begin{align*}
			\gamma(u) &\geq \gamma(c_2 r' \log r')\\
			&= \tau c_2 r' \log r' - 2(r'+1)(\log c_2 + \log(r' \log r')) + (\log C_2 - \log C_1 - r'\log 2)\\
			&\geq c_2 (\tau r' \log r' - 2(r'+1)) - 2(r'+1)(\log r' + \log\log r') + (\log C_2 - \log C_1 - r'\log 2)\\
			&>0.
		\end{align*}
		% c_2 = c_1 = 57 should still work
		Increasing the constant $c_2$ to account for $r'\in\{2,3,\dots,8\}$ gives the desired result. % c_3 = c_2 = 57 should still work
	\end{proof}
	
	\begin{proof}[Proof of Theorem \ref{thm:boundCMECm}]
		Let $m=\lceil C_3\cdot (r')^2\cdot \log(r')^2\rceil \leq C_4\cdot (r')^2\cdot \log(r')^2$ with $C_4 = C_3 + 1$. % C_4 = 3250
		Then
		\[
		\widetilde{a}_m \leq \widetilde{a}(m) < b(m) \leq b_m
		\]
		by Lemma \ref{lem:global-asympt}, Lemma \ref{lem:localvsglobalvialog}, and Lemma \ref{lem:locasymp}, respectively.
	\end{proof}
	
	\begin{theorem}[Theorem \ref{thm:mainPL}]\label{thm:boundCMEC}
		Assume the Tate--Shafarevich Conjecture and Conjecture \ref{conj:non-vani-padicLfct}.
		Then there exists an absolute, positive constant $C_5$ and a constant $\kappa = \kappa(E,p)$ depending only on the prime $p$ and (the number of bad primes of) the curve $E$ such that
		\[
		\# \mathcal{Y}(\bZ_p)_{S,U'} \leq \kappa e^{C_{5}(r+s)^3\log(r+s)^3}
		\]
		whenever $r+s\geq 2$.
	\end{theorem}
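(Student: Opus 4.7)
The plan is to combine Theorem~\ref{thm:boundCMECm} with the effective Chabauty--Kim bound \cite[Theorem~6.2.1]{alex:effective} applied to the quotient~$U'$. Since $\mathcal{Y}(\bZ_S) \subseteq \mathcal{Y}(\bZ_p)_{S,U'}$, and Theorem~\ref{thm:boundCMECm} produces an integer $m \leq C_4(r+s)^2\log(r+s)^2$ at which the $m$th coefficient of $\frac{1}{1-t}\HS'_\glob(t)$ is strictly less than that of $\frac{1}{1-t}\HS'_\loc(t)$, the effectivity result yields
\[
\#\mathcal{Y}(\bZ_S) \leq \kappa_p\cdot\#\mathcal{Y}(\bF_p)\cdot\prod_{\ell\notin S}n_\ell\cdot\prod_{\ell\in S}(n_\ell+1)\cdot 4^m\cdot\prod_{i=0}^{m-1}(c_i+1),
\]
where the $c_i$ are the coefficients of $\HS'_\loc(t)$ and $4g+2n-2=4$ since $g=n=1$.

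I would next absorb all factors independent of $r$ and $s$ into a constant $\kappa=\kappa(E,p)$: the quantity $\kappa_p\cdot\#\mathcal{Y}(\bF_p)$ depends only on $p$; the product $\prod_{\ell\notin S} n_\ell$ is bounded by $\prod_{\ell\text{ bad}} n_\ell$, which depends only on $E$; and since $n_\ell=1$ at primes of good reduction, $\prod_{\ell\in S}(n_\ell+1) \leq 2^s\cdot\prod_{\ell\text{ bad}}(n_\ell+1)$, so its bad-prime part is absorbed into $\kappa$. The remaining $r,s$-dependent contributions $2^s$ and $4^m$ together give $\exp(O((r+s)^2\log(r+s)^2))$, which is dominated by $\exp(C_5(r+s)^3\log(r+s)^3)$ whenever $r+s\geq 2$.

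The heart of the matter is then the product $\prod_{i=0}^{m-1}(c_i+1)$. Writing $f(t)\coloneqq\prod_{k\geq 1}(1-t^k)^{-1}$, we have the identity $f(t)^2 = \HS'_\loc(t)\cdot\frac{1}{(1-t)(1-t^2)}$; since the second factor has non-negative coefficients and constant term $1$, the convolution formula yields the coefficientwise bound $c_i\leq b_i$, where $b_i$ is the $i$th coefficient of $f(t)^2$. The Hardy--Ramanujan estimate~\eqref{eq:asympt-partition} applied to $b_i=\sum_{k=0}^i p(k)p(i-k)$ then gives $\log(c_i+1)=O(\sqrt{i})$, whence
\[
\sum_{i=0}^{m-1}\log(c_i+1)=O(m^{3/2}).
\]
Inserting $m\leq C_4(r+s)^2\log(r+s)^2$ yields the exponent $O((r+s)^3\log(r+s)^3)$, and combining with the previous paragraph produces the claimed bound. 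The principal obstacle in the plan is establishing the coefficientwise estimate $c_i\leq b_i$ together with the partition asymptotics for $b_i$; everything else amounts to careful bookkeeping of constants.
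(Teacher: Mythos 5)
Your proposal is correct and follows essentially the same route as the paper: apply \cite[Theorem~6.2.1]{alex:effective} to the quotient $U'$ with the $m$ furnished by Theorem~\ref{thm:boundCMECm}, then control $\prod_{i=0}^{m-1}(c_i+1)$ using $c_i\leq b_i$ (where $b_i$ are the coefficients of $f(t)^2$) together with the Hardy--Ramanujan asymptotics~\eqref{eq:asympt-partition}, yielding $\log(c_i+1)=O(\sqrt i)$ and hence $\exp(O(m^{3/2}))=\exp(O((r+s)^3\log(r+s)^3))$. The only substantive difference is your treatment of the level factors: you keep a factor $2^s$ from $\prod_{\ell\in S}(n_\ell+1)$ and absorb it into the exponential term, whereas the paper first observes that a $\bZ_S$-point of $\mathcal{Y}_{\min}$ must reduce to a multiplicity-one component, then invokes the potentially good reduction of CM elliptic curves (hence no multiplicative reduction) to obtain $n_\ell\leq 4$ at all primes; this lets the paper take $\kappa=\kappa_p(\#E(\bF_p)-1)5^t$, depending only on $p$ and the number~$t$ of bad primes of~$E$, rather than on the reduction types as in your version. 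One cosmetic note: from $c_i\leq b_i$ you should pass to $c_i+1\leq b_i+1\leq 2b_i$ (or, as the paper does, compare $c_i+1$ against the partial-sum coefficient $B_i$ of $\tfrac{1}{1-t}\HS'_\loc$) before taking logarithms, but this costs only an additive $\log 2$ and does not affect the $O(\sqrt i)$ estimate.
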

	
	\begin{proof}
		We apply \cite[Theorem 6.2.1]{alex:effective} using the bound $m\leq m_0 \coloneqq C_4 (r+s)^2 \log(r+s)^2$ from Theorem \ref{thm:boundCMECm}. For a prime $\ell$, let $n_\ell$ be the number of irreducible components of $(\mathcal{Y}_{\min})_{\mathbb{F}_\ell}$ of multiplicity one.
		Note that in the bound of \cite[Theorem 6.2.1]{alex:effective} we can use this smaller value of $n_\ell$ as a $\bZ_S$-point of $\mathcal{Y}_{\min}$ will always reduce to one of these components.
		Then $n_\ell$ is less than or equal to the number of irreducible components of $(\mathcal{E}_{\min})_{\overline{\mathbb{F}}_\ell}$ of multiplicity one.
		The latter can be read off from the classification of the special fibre of the minimal regular model of elliptic curves, see \cite[p.\ 365]{SilvermanArithEll2}.
		By \cite[Theorem II.6.4]{SilvermanArithEll2}, CM elliptic curves have potentially good reduction, thus cannot have multiplicative reduction.
		Thus we see that $n_\ell \leq 4$ for all primes $\ell$.
		
		For $i\geq 1$, the coefficient $(c'_i)^\loc$ of $\HS'_\loc(t)$ satisfies
		\[
		(c'_i)^\loc + 1 \leq B_i \leq b_i \leq (i+1)p(i)^2 \leq 2 C_1^2 \frac{1}{i} e^{2\pi\sqrt{2i/3}},
		\]
		where the last inequality follows from \eqref{eq:asympt-partition}.
		Now \cite[Theorem 6.2.1]{alex:effective} yields
		\[
		\# \mathcal{Y}_{\min}(\bZ_p)_{S,U'} \leq \kappa_p (\#E(\bF_p)-1) 5^t 4^{m_0} (2C_1^2)^{m_0} \frac{1}{m_0!} e^{2\pi\sqrt{2/3}\sum_{i=1}^{m_0} \sqrt{i}},
		\]
		where $t$ denotes the number of bad primes of $E$.
		Of course $\frac{1}{m_0!}\leq 1$.
		By the summation formula for square roots, e.\,g.\ using the Euler--MacLaurin formula \cite[Theorem 12.27]{Discretecalc}, for all $m$ we have $\sum_{i=1}^m \sqrt{i} \leq \frac{2}{3} m^{3/2} + \frac{1}{2} m^{1/2}$. %  - \frac{1}{6}, but that's just improving the constant in the end
		%With $D_2=D_2(E)\coloneqq8C^2 D_1$ and $K_2 = K_2(p)\coloneqqK_1 e^{C_8}$ we get % so D_2 = 80 (if E has no type I_n^\ast reduction), D_2 = 40 if only components of multiplicity 1 play a role, and K_2 \leq 1.3*K_1
		With $\kappa=\kappa(E,p)\coloneqq\kappa_p (\#E(\bF_p)-1) 5^t$, we get %D=8, if C=1; could improve K(E,p) by multiplying it with exp(-2*pi*sqrt(2/3)/6) = 0.425
		\[
		\# \mathcal{Y}_{\min}(\bZ_p)_{S,U'} \leq \kappa \exp\left\{m_0 \log(8C_1^2) + 2\pi\sqrt{\frac{2}{3}}\left(\frac{2}{3} m_0^{3/2} + \frac{1}{2} m_0^{1/2}\right)\right\}.
		\]
		Finally, the exponent is dominated by a term of the form $c_5 m_0^{3/2}$, where $c_5$ is an absolute constant, and the assertion follows by plugging in $m_0$ and putting $C_{5} = c_5 C_4^{3/2}$. % c_5 = 8.1 works (with D = 8), and for m large c_5 = 3.5 is enough, so C_5 = 8.1*3250^(3/2) = slightly more than 1.5 million
	\end{proof}
	
	\begin{remark}\label{rem:boundscomparison}
		\begin{enumerate}
			\item Viewing $r$ as a constant depending on $E$, we see that for a given CM elliptic curve $E$ we have
			\[
			\# \mathcal{Y}_{\min}(\bZ_p)_{S,U'} \leq \kappa e^{C_{6}s^3\log(s)^3},
			\]
			with $C_{6}$ a constant depending on $E$.
			Thus fixing $E$, we get a bound on the $S$-integral points that depends only on $\# S = s$ (and $p$).
			\item We remark again that $\Y(\bZ_S)=\Y_{\min}(\bZ_S) \subset \Y_{\min}(\bZ_p)_{S,U'}$, so we get the same bound for the number of $S$-integral points on the affine Weierstra\ss ~model $\Y$.
			The $j$-invariant of a CM elliptic curve is integral \cite[Theorem II.6.1]{SilvermanArithEll2}, thus \cite[Theorem A]{SilvermanQuantiSiegel} says that there exists a constant $\kappa'$ (independent of $E$ and $p$) such that
			\[
			\# \mathcal{Y}(\bZ_S) \leq (\kappa')^{1+r+s}.
			\]
			For $r+s$ large, this bound is better than the bound of Theorem \ref{thm:boundCMEC}.
		\end{enumerate}
	\end{remark}
	
	\begin{remark}\label{rem:constantcomput}
		In practice, one can compute the constant $\kappa$.
		We determined explicit values for all other appearing constants, starting from an improved version of \eqref{eq:asympt-partition} implied by Lehmer's error bound, see the first inequality in the proof of Theorem 2.1 in \cite{bessenrodtono_partition}.
		Namely, one has $(C_0,C_1)=(\frac{\sqrt{3}}{12}e^{\sqrt{23}-\sqrt{24}},\frac{\sqrt{3}}{6})\approx(0.13019,0.28868)$, leading to $C_2=0.00552$, $c_1=c_2=c_3=147$, $C_3=21609$, $C_4=21610$, $c_5=6$ and finally $C_5=19060462$.
		We do not claim these numbers to be optimal.
	\end{remark}
	
	\begin{remark}\label{rem:boundCMECuncond}
		Let us also present an unconditional result, i.\,e.\ without assuming the Tate--Shafarevich Conjecture nor Conjecture \ref{conj:non-vani-padicLfct}.
		Namely, using Remark \ref{rem:globSeldimonelargek}(1) we know that $\dim \HH^1_f(G_\bQ;V_k')=1$ for $k\gg 0$.
		The statement of Theorem \ref{thm:boundCMECm} holds verbatim with $r'=r+s$ replaced by $r'=r_p+s+\delta$ where $r_p$ is the $p^\infty$-Selmer rank of~$E$ and
		\[
		\delta = \sum_{k\geq3}\bigl(\dim \HH^1_f(G_\bQ;V_k')-1\bigr) \,.
		\]
		The same is true for Theorem \ref{thm:boundCMEC}, proving Theorem \ref{thm:mainPL2} with $C=C_5$ and $r_1 = r_p+\delta$.
	\end{remark}
	
	\begin{remark}
		As M.\ Lüdtke pointed out, we can apply the techniques of this appendix to bound the number of $S$-integral points of $\mathcal{Y}=\mathbb{P}^1_\bZ\setminus\{0,1,\infty\}$ using the polylogarithmic quotient $U'$ of the full $\bQ_p$-pro-unipotent fundamental group $U$ of $Y=\mathbb{P}^1_\bQ\setminus\{0,1,\infty\}$.
		Here our results are unconditional, i.e. do not rely any Bloch--Kato type conjecture like Conjecture \ref{conj:BK} or Conjecture \ref{conj:non-vani-padicLfct}.
		The polylogarithmic quotient is defined in \cite[(16.11)]{Deligne89} (denoted $U$ there).
		Namely, in this case one has 
		\[
		V_k' = \gr^{\W}_{-k} U' = \begin{cases}
			0, & k \text{ odd},\\
			\bQ_p(1)^{\oplus 2}, & k=2,\\
			\bQ_p(k/2), & k\geq 4 \text{ even}.
		\end{cases}
		\]
		(Bounds on) the local and global Selmer dimensions of $\bQ_p(n)$ are known, using \cite[Example 3.9]{BlochKato} and Soul\'e's vanishing theorem \cite{soulevani}, see also \cite[\S 4.3.1]{BellaicheNotes}, leading to the following Hilbert series
		\begin{align*}
			\HS'_\loc(t) &= (1-t^2)^{-2} \prod_{k\geq 3 \text{ even}} (1-t^k)^{-1} = (1-t^2)^{-1} \sum_{n\geq 0} p(n) t^{2n},\\
			\HS'_\glob(t) &\leq (1-t^2)^{-s} \prod_{k\geq 3, ~k\equiv 2 \pmod 4} (1-t^k)^{-1} = (1-t^2)^{-(s-1)} \sum_{n\geq 0} q(n) t^{2n},
		\end{align*}
		where by $\leq$ we mean coefficientwise inequality and $q(n)$ denotes the number of partitions of $n$ into odd parts.
		Repeating the analysis of Theorem \ref{thm:boundCMECm} using \eqref{eq:asympt-partition} and the analogous asymptotic of $q(n)$, see \cite[\S 9.6]{BrownIntegral}, yields a constant $\gamma$ such that the $m$-th coefficient of  $\frac{1}{1-t}\HS'_\glob(t)$ is less than the $m$-th coefficient of $\frac{1}{1-t}\HS'_\loc(t)$
		for some $m\leq \gamma s^2 \log(s)^2$.
		\cite[Theorem 6.2.1(B)]{alex:effective} together with \cite[Lemma 7.0.5]{alex:effective} then imply that there is a constant $\gamma'$ such that
		\[
		\#\mathcal{Y}(\bZ_p)_{S,U'} \leq e^{\gamma' s^2 \log(s)^2}.
		\]
		As in Remark \ref{rem:constantcomput}, it should be possible to make the constants $\gamma$ and $\gamma'$ explicit.
		As was the case for CM elliptic curves, the bound for the quotient $U'$ is better than the bound in \cite[Theorem D]{alex:effective} obtained using the full fundamental group $U$ instead of $U'$.
		However, the bound is still worse than the bound of $3\cdot 7^{2s+1}$ of Evertse (\cite[Theorem 1]{Evertse}).
	\end{remark}
	
	\bibliography{david_references}%%%%%%
	%%%%%%%%%%%%%%%%%%
	\bibliographystyle{alpha}%%%

\end{document}